\newtheorem{lemma}{Lemma}[section]
\newtheorem{theorem}[lemma]{Theorem}
\newtheorem{corollary}[lemma]{Corollary}
\newtheorem{prop}[lemma]{Proposition}
\newtheorem{thm}[lemma]{Theorem}
\newtheorem{cor}[lemma]{Corollary}
\theoremstyle{definition}
\newtheorem{question}[lemma]{Question}
\theoremstyle{remark}
\numberwithin{equation}{section}
\begin{document}

\title{Noncommutative Tsen's theorem in dimension one}
\author{A. Nyman}
\address{Department of Mathematics, 516 High St, Western Washington University, Bellingham, WA 98225-9063}
\email{adam.nyman@wwu.edu}
\keywords{}
\thanks{2010 {\it Mathematics Subject Classification. } Primary 14A22, 14H45; Secondary 16S38}

\begin{abstract}
Let $k$ be a field.  In this paper, we find necessary and sufficient conditions for a noncommutative curve of genus zero over $k$ to be a noncommutative $\mathbb{P}^{1}$-bundle.  This result can be considered a noncommutative, one-dimensional version of Tsen's theorem.  By specializing this theorem, we show that every arithmetic noncommutative projective line is a noncommutative curve, and conversely we characterize exactly those noncommutative curves of genus zero which are arithmetic.  We then use this characterization, together with results from \cite{nyman}, to address some problems posed in \cite{kussin}.
\end{abstract}

\maketitle

\pagenumbering{arabic}

\section{Introduction}
Throughout this paper, $k$ will denote a field.  In \cite{kussin}, the concept of a noncommutative curve of genus zero is defined as a small $k$-linear abelian category $\sf H$ such that
\begin{itemize}
\item{} each object of ${\sf H}$ is noetherian,

\item{} all morphism and extension spaces in ${\sf H}$ are finite dimensional over $k$,

\item{} ${\sf H}$ admits an Auslander-Reiten translation, i.e. an autoequivalence $\tau$ such that Serre duality $\operatorname{Ext}_{{\sf H}}^{1}(\mathcal{E},\mathcal{F}) \cong D\operatorname{Hom}_{{\sf H}}(\mathcal{F},\tau \mathcal{E})$ holds, where $D(-)$ denotes the $k$-dual,

\item{} ${\sf H}$ has an object of infinite length, and

\item{} ${\sf H}$ has a tilting object.
\end{itemize}
One motivation for the definition is that if $C$ is a smooth projective curve of genus zero over $k$, then the category of coherent sheaves over $C$ satisfies these properties.  Furthermore, Kussin calls the category ${\sf H}$ {\it homogeneous} if
\begin{itemize}
\item{} for all simple objects $\mathcal{S}$ in ${\sf H}$, $\operatorname{Ext}_{{\sf H}}^{1}(\mathcal{S},\mathcal{S}) \neq 0$.
\end{itemize}
If ${\sf H}$ is not homogenous (e.g. if ${\sf H}$ is a weighted projective line) then ${\sf H}$ is birationally equivalent to a homogeneous noncommutative curve of genus zero \cite[p. 2]{kussin}.  Therefore, from the perspective of noncommutative birational geometry, the homogeneous curves play a crucial role.

If ${\sf H}$ is a homogenous noncommutative curve of genus zero and $\mathcal{L}$ is a line bundle on ${\sf H}$, then there exists an indecomposable bundle $\overline{\mathcal{L}}$ and an irreducible morphism $\mathcal{L} \longrightarrow \overline{\mathcal{L}}$ coming from an AR sequence starting at $\mathcal{L}$.  Kussin calls the bimodule $M:={}_{\operatorname{End }(\overline{\mathcal{L}})}\operatorname{Hom}_{{\sf H}}(\mathcal{L},\overline{\mathcal{L}})_{\operatorname{End }(\mathcal{L})}$ the {\it underlying bimodule} of ${\sf H}$.  It turns out that the only possibilities for the left-right dimensions of $M$ are $(1,4)$ and $(2,2)$.

On the other hand, in \cite{vandenbergh}, M. van den Bergh introduces the notion of a noncommutative $\mathbb{P}^{1}$-bundle over a pair of commutative schemes $X$, $Y$.  In particular, if $K$ and $L$ are finite extensions of $k$ and $N$ is a $k$-central $K-L$-bimodule of finite dimension as both a $K$-module and an $L$-module, then one can form the $\mathbb{Z}$-algebra $\mathbb{S}^{n.c.}(N)$, the noncommutative symmetric algebra of $N$ (see Section \ref{section.ncsym} for details).  The {\it noncommutative $\mathbb{P}^{1}$-bundle generated by $N$}, $\mathbb{P}^{n.c.}(N)$, is defined to be the quotient of the category of graded right $\mathbb{S}^{n.c.}(N)$-modules modulo the full subcategory of direct limits of right bounded modules.  It is natural to ask whether a homogeneous noncommutative curve of genus zero is a noncommutative $\mathbb{P}^{1}$-bundle generated by $M$, at least under the necessary condition that $\operatorname{End}(\mathcal{L})$ and $\operatorname{End}(\overline{\mathcal{L}})$ are commutative.  Our main result is that this is the case. Before we state it precisely, we need to introduce some notation.  If ${\sf C}$ is a noetherian category, then there exists a unique locally noetherian category $\tilde{\sf C}$ whose full subcategory of noetherian objects is ${\sf C}$ \cite[Theorem 2.4]{blowup}.  Furthermore, if ${\sf C}$ and ${\sf D}$ are $k$-linear categories and there exists a $k$-linear equivalence ${\sf C} \longrightarrow {\sf D}$, we write ${\sf C} \equiv {\sf D}$.

Our main theorem is the following (Theorem \ref{thm.almostring}):
\begin{theorem} \label{thm.main}
If ${\sf H}$ is a homogeneous noncommutative curve of genus zero over with underlying bimodule $M$ such that $\operatorname{End}(\mathcal{L})$ and $\operatorname{End}(\overline{\mathcal{L}})$ are commutative, then
$$
\tilde{{\sf H}} \equiv \mathbb{P}^{n.c.}(M).
$$
Conversely, if $K$ and $L$ are finite extensions of $k$ and $N$ is a $k$-central $K-L$-bimodule of left-right dimension $(2,2)$ or $(1,4)$, then $\mathbb{P}^{n.c.}(N)$ is a noncommutative curve of genus zero with underlying bimodule $N$.
\end{theorem}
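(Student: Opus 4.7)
The plan is to prove both implications by exploiting the rigid combinatorial structure of a homogeneous genus zero curve.

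For the forward direction, I would first build a doubly infinite sequence of line bundles $\{\mathcal{L}_n\}_{n \in \mathbb{Z}}$ from $\mathcal{L}_0 := \mathcal{L}$, where each $\mathcal{L}_{n+1}$ arises as the middle term quotient of the AR sequence starting at $\mathcal{L}_n$ (so that $\mathcal{L}_1 = \overline{\mathcal{L}}$). Because ${\sf H}$ is homogeneous and $\operatorname{End}(\mathcal{L}_n)$ is commutative by hypothesis (the commutativity propagating along the sequence by the structure of the AR translate), the bimodule ${}_{\operatorname{End}(\mathcal{L}_{n+1})}\operatorname{Hom}(\mathcal{L}_n,\mathcal{L}_{n+1})_{\operatorname{End}(\mathcal{L}_n)}$ should be isomorphic to $M$ (up to twisting by $\tau$). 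Next I would assemble the $\mathbb{Z}$-algebra $A$ with $A_{ij} := \operatorname{Hom}_{{\sf H}}(\mathcal{L}_j, \mathcal{L}_i)$ and composition as product, and compare it to $\mathbb{S}^{n.c.}(M)$.

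The heart of the argument is to identify $A$ with the noncommutative symmetric algebra. By the universal property of $\mathbb{S}^{n.c.}(M)$, a canonical $\mathbb{Z}$-algebra map $\mathbb{S}^{n.c.}(M) \to A$ exists once one checks that the degree-one pieces match and that the defining degree-two relations of $\mathbb{S}^{n.c.}(M)$, which come from the dual bimodule pairing, are satisfied in $A$. These relations should be forced by the AR sequence itself: the middle term of the AR triangle gives exactly the Koszul-type relation in $\operatorname{Hom}(\mathcal{L}_n,\mathcal{L}_{n+2})$. Dimension counting, together with Serre duality and the Riemann--Roch type formula available on a genus zero curve, will then show this map is an isomorphism in every bidegree.

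The main obstacle I anticipate is twofold: first, verifying that the bimodules $\operatorname{Hom}(\mathcal{L}_n,\mathcal{L}_{n+1})$ are all isomorphic (up to the appropriate twist) to the single bimodule $M$ used to form $\mathbb{S}^{n.c.}(M)$, which requires a careful analysis of how the AR translation acts on the endomorphism rings and tracking the $(2,2)$ versus $(1,4)$ cases separately; and second, promoting the $\mathbb{Z}$-algebra isomorphism $A \cong \mathbb{S}^{n.c.}(M)$ to a categorical equivalence $\tilde{\sf H} \equiv \mathbb{P}^{n.c.}(M)$. For the latter I would appeal to a Serre-type theorem: the collection $\{\mathcal{L}_n\}$ should be ample in the sense required so that $\bigoplus_n \operatorname{Hom}(\mathcal{L}_n,-)$ induces the desired equivalence between $\tilde{\sf H}$ and graded modules modulo torsion over $A$.

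For the converse, I would verify Kussin's five axioms directly for ${\sf H}' := \mathbb{P}^{n.c.}(N)$. Noetherianness and finite-dimensionality of Hom/Ext spaces follow from the known structural properties of $\mathbb{S}^{n.c.}(N)$ established in \cite{vandenbergh}; the shift functor on the $\mathbb{Z}$-algebra provides a candidate Auslander--Reiten translation $\tau$, and Serre duality should follow from the standard dualizing bimodule construction for noncommutative $\mathbb{P}^1$-bundles. A tilting object is furnished by $\mathcal{O} \oplus \mathcal{O}(1)$ (the images of the projective $\mathbb{S}^{n.c.}(N)$-modules of degree $0$ and $1$), and objects of infinite length exist because generic simples parametrize a one-parameter family. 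Finally, to identify the underlying bimodule of ${\sf H}'$ with $N$, I would compute the AR sequence starting at $\mathcal{O}$ and observe that $\overline{\mathcal{O}} = \mathcal{O}(1)$, so $\operatorname{Hom}(\mathcal{O},\mathcal{O}(1))$ recovers $N$ on the nose from the definition of $\mathbb{S}^{n.c.}(N)$.
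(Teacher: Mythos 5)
Your overall skeleton for the forward direction matches the paper's: form a $\mathbb{Z}$-algebra from the full alternating sequence of indecomposable bundles $\mathcal{L},\overline{\mathcal{L}},\tau^{-1}\mathcal{L},\tau^{-1}\overline{\mathcal{L}},\dots$, map $\mathbb{S}^{n.c.}(M)$ onto it using the fact that the AR sequences supply the defining relations, count dimensions, and invoke ampleness to pass from the $\mathbb{Z}$-algebra isomorphism to the equivalence $\tilde{\sf H}\equiv\mathbb{P}^{n.c.}(M)$. But there are two genuine gaps. First, your assertion that each $\operatorname{Hom}(\mathcal{L}_n,\mathcal{L}_{n+1})$ is ``isomorphic to $M$ up to twisting by $\tau$'' is false and obscures the crux of the construction: the degree-one components of $\mathbb{S}^{n.c.}(M)$ are the iterated duals $M^{i*}$, and in the $(1,4)$ case the consecutive Hom-bimodules alternate between left-right dimensions $(1,4)$ and $(4,1)$ over different pairs of fields, so they cannot all be $M$. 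What must actually be proved is a bimodule isomorphism ${}^{*}\operatorname{Hom}(\mathcal{N},\mathcal{P})\cong\operatorname{Hom}(\mathcal{P},\tau^{-1}\mathcal{N})$, including a precise description of the $\operatorname{End}(\mathcal{N})$-action on the right-hand side induced by $\tau^{-1}$ (the paper's Proposition 3.3, adapted from \cite{dlab}); without this there is no well-defined map out of $\mathbb{S}^{n.c.}(M)$ at all. Second, ``dimension counting'' requires knowing the Hilbert function of $\mathbb{S}^{n.c.}(M)$ itself, not just of the Hom spaces on the curve. For $(2,2)$-bimodules this is in \cite{vandenbergh}, but for $(1,4)$-bimodules it is not available and one must prove left-exactness of the Euler-type sequence $\mathbb{S}^{n.c.}(M)_{i,j-1}\otimes Q_{j}\to\mathbb{S}^{n.c.}(M)_{ij}\otimes M^{j+1*}\to\mathbb{S}^{n.c.}(M)_{i,j+1}\to 0$, which is the hardest technical point of the paper (its Lemma 3.7 and Proposition 3.8) and is entirely absent from your plan.

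For the converse your route diverges from the paper's and is substantially harder than you suggest. The paper simply cites Kussin's existence theorem (\cite[Section 0.5.2]{kussin}) producing a homogeneous curve ${\sf H}$ with underlying bimodule $N$, and then applies the forward direction to conclude $\mathbb{P}^{n.c.}(N)\equiv\tilde{\sf H}$. Verifying Kussin's five axioms directly for $\mathbb{P}^{n.c.}(N)$, as you propose, would require proving Serre duality and the existence of an AR translation for noncommutative $\mathbb{P}^{1}$-bundles over a pair of distinct fields with a $(1,4)$-bimodule, and establishing that $\mathcal{O}\oplus\mathcal{O}(1)$ is tilting; none of these is off-the-shelf in the generality needed (Serre duality alone occupies a separate paper in the $(2,2)$ setting), so as written this half of your argument rests on unproven inputs.
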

If we think of a homogeneous noncommutative curve of genus zero as being a kind of conic bundle over the pair $\operatorname{Spec }(\operatorname{End}(\mathcal{L}))$, $\operatorname{Spec }(\operatorname{End}(\overline{\mathcal{L}}))$, then Theorem \ref{thm.main} can be interpreted as saying that all noncommutative one-dimensional conic bundles are noncommutative one-dimensional $\mathbb{P}^{1}$-bundles, whence the connection to Tsen's theorem.

The main idea behind the proof of Theorem \ref{thm.main} is that one can form a $\mathbb{Z}$-algebra coordinate ring for $\tilde{{\sf H}}$ by employing {\it all} of the indecomposable bundles in ${\sf H}$, as opposed to constructing the $\mathbb{Z}$-graded algebra using only the $\tau$-orbit of $\mathcal{L}$, as is done in \cite{kussin}.  One then shows that this $\mathbb{Z}$-algebra is isomorphic to the noncommutative symmetric algebra $\mathbb{S}^{n.c.}(M)$.  Fortunately, many of the technical details needed to construct this isomorphism are provided by \cite[Proposition 2.1 and 2.2]{dlab} and \cite{kussin}.

\subsection{Noncommutative curves of genus zero and arithmetic noncommutative projective lines}

We now describe a specialization of Theorem \ref{thm.main} and its applications.  In \cite{nyman}, the notion of arithmetic noncommutative projective line over $k$ is studied.  These are none other than spaces of the form $\mathbb{P}^{n.c.}(V)$ where $V$ is a $k$-central, $K-K$-bimodule of left-right dimension $(2,2)$.  Among other things, the automorphism groups of these spaces are computed in terms of arithmetic data defining $V$ \cite[Lemma 7.2, Lemma 7.3, and Theorem 7.4]{nyman}, and isomorphism invariants are determined \cite[Theorem 5.5]{nyman}.  As a consequence of Theorem \ref{thm.main} we have the following (Corollary \ref{newcor.main}):

\begin{cor} \label{cor.main}
Suppose ${\sf H}$ is a homogeneous noncommutative curve of genus zero such that $M$ has left-right dimension $(2,2)$, and $\operatorname{End }(\mathcal{L})$ and $\operatorname{End }(\overline{\mathcal{L}})$ are isomorphic and commutative.  Then $\tilde{{\sf H}} \equiv \mathbb{P}^{n.c.}(M)$, so that $\tilde{{\sf H}}$ is an arithmetic noncommutative projective line.

Conversely, every arithmetic noncommutative projective line $\mathbb{P}^{n.c.}(V)$ is a homogeneous noncommutative curve of genus zero with underlying bimodule $V$.
\end{cor}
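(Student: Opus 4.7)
The plan is to obtain both implications as essentially direct specializations of Theorem \ref{thm.main}, since the hypotheses of the corollary are exactly tailored to the arithmetic case. For the forward direction, set $K := \operatorname{End}(\mathcal{L})$ and $L := \operatorname{End}(\overline{\mathcal{L}})$. By assumption $K$ and $L$ are commutative finite field extensions of $k$ (they are finite over $k$ since ${\sf H}$ is $\Hom$-finite), and $M$ is a $k$-central $K$--$L$-bimodule of left-right dimension $(2,2)$, so the hypotheses of Theorem \ref{thm.main} are met and we conclude $\tilde{{\sf H}} \equiv \mathbb{P}^{n.c.}(M)$. To upgrade this to ``arithmetic'', fix an isomorphism $\varphi \colon L \longrightarrow K$ and use it to view $M$ as a $k$-central $K$--$K$-bimodule $M^{\varphi}$ of left-right dimension $(2,2)$; the $\mathbb{Z}$-algebra $\mathbb{S}^{n.c.}(M)$ and the quotient defining $\mathbb{P}^{n.c.}(M)$ are insensitive to this relabeling, so $\mathbb{P}^{n.c.}(M) \equiv \mathbb{P}^{n.c.}(M^{\varphi})$, and the right-hand side is by definition an arithmetic noncommutative projective line.

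For the converse, let $V$ be a $k$-central $K$--$K$-bimodule of left-right dimension $(2,2)$. Since $(2,2)$ is one of the two allowed dimension types in Theorem \ref{thm.main}, applying the converse direction of that theorem with $L = K$ gives immediately that $\mathbb{P}^{n.c.}(V)$ is a noncommutative curve of genus zero with underlying bimodule $V$; homogeneity follows from the same theorem since in particular $V$ has no weighted points. The only substantive point to verify is the bookkeeping remark above: namely, that transport along the chosen isomorphism $L \cong K$ does not alter the category $\mathbb{P}^{n.c.}(M)$ up to $k$-linear equivalence, and that ``underlying bimodule'' in Theorem \ref{thm.main} is returned as $V$ on the nose (rather than only up to isomorphism). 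This is the main, but purely formal, obstacle; it reduces to naturality of the noncommutative symmetric algebra construction $\mathbb{S}^{n.c.}(-)$ in its bimodule argument, so no genuinely new ideas beyond Theorem \ref{thm.main} enter.
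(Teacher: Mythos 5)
Your proposal is correct and follows essentially the same route as the paper, which simply states that the corollary ``follows immediately from Theorem \ref{thm.main}'' (see Corollary \ref{newcor.main}) without further argument. The only extra content you supply --- transporting the $K$--$L$-bimodule structure along a chosen isomorphism $L \cong K$ and noting that $\mathbb{S}^{n.c.}(-)$ is insensitive to this relabeling --- is exactly the bookkeeping the paper leaves implicit, and your appeal to the converse of Theorem \ref{thm.main} (whose proof already produces a \emph{homogeneous} curve via Kussin's construction) correctly yields homogeneity.
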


Corollary \ref{cor.main} has several applications to both arithmetic noncommutative projective lines and homogeneous noncommutative curves of genus zero, which we now describe.

\subsection{Applications to arithmetic noncommutative projective lines}

In \cite[Theorem 3.7]{nyman}, homological techniques are employed to give a proof that the noncommutative symmetric algebra of a $k$-central $K-K$-bimodule of left-right dimension $(2,2)$, $V$, is a domain in the sense that if $x \in \mathbb{S}^{n.c.}(V)_{ij}$ and $y \in \mathbb{S}^{n.c.}(V)_{jl}$ then $xy=0$ implies that $x=0$ or $y=0$.  Using Corollary \ref{cor.main}, we produce a much shorter proof of a generalization (Proposition \ref{prop.domain}).

In \cite{nyman}, a classification of spaces of the form $\mathbb{P}^{n.c.}(V)$ up to $k$-linear equivalence, and a classification of isomorphisms between such spaces, is described. It is natural, then, to determine whether a Bondal-Orlov theorem exists for arithmetic noncommutative projective lines.  It is known that such a theorem holds more generally for homogeneous noncommutative curves of genus zero, and therefore, by Corollary \ref{cor.main}, we deduce (Theorem \ref{thm.bondalorlov}) a Bondal-Orlov theorem for arithmetic noncommutative projective lines.  Since we could not find any proofs of this fact in the literature, we provide a proof for the readers convenience.

\subsection{Applications to homogeneous noncommutative curves of genus zero} \label{section.questions}
We use Corollary \ref{cor.main} to address three questions that appear in \cite{kussin}, which we now describe.  In \cite[1.1.5]{kussin}, Kussin partitions the collection of underlying bimodules into three classes, or ``orbit cases".  Orbit case I consists of those $M$ which have left-right dimension $(1,4)$.  Orbit case II consists of those $M$ with left-right dimension $(2,2)$ such that the group of point preserving $k$-linear autoequivalences acts transitively on the set of line bundles.  Orbit case III consists of all other $M$ of left-right dimension $(2,2)$.  Kussin poses a number of problems related to this trichotomy \cite{kussin}.  For example he asks the following

\begin{question} \label{question.first} (\cite[Problem 1.1.11]{kussin}):
Is there a criterion from which one can easily decide whether a given bimodule of left-right dimension $(2,2)$ is in orbit case II or III?
\end{question}
We now describe the next question we address.  In certain cases in which $M$ is a non-simple bimodule of left-right dimension $(2,2)$, and the center of $\operatorname{End}(\overline{\mathcal{L}})$ has cyclic Galois group over $k$, Kussin obtains a precise description of the automorphism group of ${\sf H}$, denoted $\operatorname{Aut }(\mathbb{X})$ \cite[Theorem 5.3.4]{kussin}.  By definition, this is the group of isomorphism classes of $k$-linear equivalences fixing $\mathcal{L}$ up to isomorphism.  This leads to the following

\begin{question} \label{question.third} (\cite[Problem 5.4.7]{kussin}):
What is $\operatorname{Aut }(\mathbb{X})$ in other situations, and in particular in case $M$ is a simple bimodule?
\end{question}
Another question is the following
\begin{question} \label{question.second} (\cite[third part of Problem 5.4.6]{kussin}): Find a general functorial formula for the Serre functor.
\end{question}

Corollary \ref{cor.main}, together with results from \cite{nyman}, allow us to address Questions \ref{question.first}, \ref{question.third} and \ref{question.second} in the case that an underlying bimodule in ${\sf H}$ has left-right dimension $(2,2)$, $\operatorname{End}(\mathcal{L})$ and $\operatorname{End}(\overline{\mathcal{L}})$ are commutative and isomorphic, and $k$ is perfect with $\operatorname{char }k \neq 2$.  Although we are not able to provide a complete answer to Question \ref{question.first}, we are able to address a closely related problem (Proposition \ref{prop.first}).  To state the result, we need to introduce some notation.  If $\sigma$ is a $k$-linear automorphism of $K$, then we let $K_{\sigma}$ denote the $k$-central $K-K$-bimodule whose underlying set is $K$ and whose bimodule structure is given by $a \cdot x \cdot b := ax\sigma(b)$.

\begin{prop}
Suppose $k$ is perfect and $\operatorname{char }k \neq 2$.  If ${\sf H}$ is a homogeneous noncommutative curve of genus zero such that an underlying bimodule, $M$, has left-right dimension $(2,2)$, and $\operatorname{End}(\mathcal{L})$ and $\operatorname{End}(\overline{\mathcal{L}})$ are commutative and isomorphic, then the group of $k$-linear autoequivalences of ${\sf H}$ acts transitively on line bundles if and only if there exists $\sigma, \epsilon \in \operatorname{Gal}(K/k)$ such that $M \cong K_{\sigma} \otimes {}^{*}M \otimes K_{\epsilon}$.
\end{prop}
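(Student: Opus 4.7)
The plan is to use Corollary \ref{cor.main} to identify $\tilde{{\sf H}}$ with the arithmetic noncommutative projective line $\mathbb{P}^{n.c.}(M)$, and then translate transitivity on line bundles into an arithmetic condition on $M$ using the description of $k$-linear autoequivalences of $\mathbb{P}^{n.c.}(V)$ from \cite[\S 7]{nyman}. Under the identification provided by Corollary \ref{cor.main}, the line bundles in $\tilde{\sf H}$ are, up to isomorphism, the shifts $\mathcal{O}(n)$ for $n \in \mathbb{Z}$, and they split into two $\tau$-orbits indexed by the parity of $n$---this is because the bimodules generating successive degrees of the $\mathbb{Z}$-algebra $\mathbb{S}^{n.c.}(M)$ alternate between $M$ and ${}^{*}M$, so $\tau$ acts as a shift by two on line bundles. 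Since $\tau$ is itself a $k$-linear autoequivalence of $\tilde{\sf H}$, transitivity of the autoequivalence group on line bundles is equivalent to the existence of a single $k$-linear autoequivalence $F$ sending $\mathcal{O}(0)$ to $\mathcal{O}(1)$.

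Next, I would show that any such $F$ corresponds, via the $\mathbb{Z}$-algebra coordinate ring produced in the proof of Theorem \ref{thm.main}, to an isomorphism of $\mathbb{Z}$-algebras between $\mathbb{S}^{n.c.}(M)$ and its shift $\mathbb{S}^{n.c.}(M)[1]$. Since $\mathbb{S}^{n.c.}(M)$ is generated in degree one, such an isomorphism is determined by its degree-$(0,1)$ component, which is a bimodule isomorphism from $M$ (on the source) to ${}^{*}M$ (on the target). This isomorphism must be $k$-linear but need not be $K$-linear, because the identifications of $\operatorname{End}(\mathcal{L})$ and $\operatorname{End}(\overline{\mathcal{L}})$ with $K$ are pinned down only up to $k$-linear field automorphisms of $K$, which, since $k$ is perfect and $K/k$ is a finite separable extension, is an element of $\operatorname{Gal}(K/k)$. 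Encoding the resulting twists of the left and right $K$-actions by the bimodules $K_{\sigma}$ and $K_{\epsilon}$ then yields the equivalence between the existence of $F$ and the existence of $\sigma, \epsilon \in \operatorname{Gal}(K/k)$ with
$$M \cong K_{\sigma} \otimes {}^{*}M \otimes K_{\epsilon}.$$

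The main obstacle I expect is the careful bookkeeping in the middle step. Going forward, one must verify rigorously that $F$ does descend to a $\mathbb{Z}$-algebra isomorphism with the stated target, which requires identifying how $F$ acts on each $\operatorname{Hom}$-space $\operatorname{Hom}(\mathcal{O}(i),\mathcal{O}(j))$ and comparing this with the degree-$(i+1,j+1)$ part of $\mathbb{S}^{n.c.}(M)$. In the reverse direction, one must check that any bimodule isomorphism of the stated form extends degree by degree to a $\mathbb{Z}$-algebra isomorphism $\mathbb{S}^{n.c.}(M) \longrightarrow \mathbb{S}^{n.c.}(M)[1]$ and then lifts to a $k$-linear autoequivalence of the quotient category $\mathbb{P}^{n.c.}(M)$. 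This is essentially the content of the framework developed in \cite[\S 7]{nyman}---where the hypotheses that $k$ be perfect and $\operatorname{char} k \neq 2$ also enter---so in the end the proof amounts to applying those results in the present context, together with the identification supplied by Corollary \ref{cor.main}.
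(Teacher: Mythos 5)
Your proposal is correct and follows essentially the same route as the paper: identify $\tilde{\sf H}$ with $\mathbb{P}^{n.c.}(M)$, reduce transitivity to the existence of one autoequivalence swapping the two $\tau$-orbits of line bundles (i.e.\ sending $\mathcal{L}$ to $\overline{\mathcal{L}}$), and read off the condition $M \cong K_{\sigma} \otimes {}^{*}M \otimes K_{\epsilon}$ from the decomposition $F \cong [-i]\circ\Phi\circ T_{\delta,\epsilon}$. The only difference is that the paper simply invokes Theorem \ref{thm.auto} together with the lemmas of \cite{nyman} identifying $\mathcal{L}$ and $\overline{\mathcal{L}}$ with $\pi e_{j}\mathbb{S}^{n.c.}(M)$ and $\pi e_{j-1}\mathbb{S}^{n.c.}(M)$ to force $i=-1$, rather than rederiving the correspondence between autoequivalences and shifted $\mathbb{Z}$-algebra isomorphisms as you sketch.
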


{\it Acknowledgements:}  I thank D. Kussin for patiently answering my many questions regarding \cite{kussin}.  I thank D. Chan for suggesting I should generalize a previous version of Theorem \ref{thm.main} to include $(1,4)$-bimodules, for pointing out an error in an earlier version of the proof of Proposition \ref{prop.newdimcount}, for helping me correct it, and for pointing out the relationship between Theorem \ref{thm.main} and Tsen's theorem.

\section{Noncommutative symmetric algebras} \label{section.ncsym}
The purpose of this section is to recall the definition of the noncommutative symmetric algebra of a bimodule from \cite{vandenbergh}. For the remainder of the paper, we let $K$ and $L$ denote finite field extensions of $k$.  We let $N$ denote a $k$-central $K-L$-bimodule which has finite dimension as both a $K$-module and an $L$-module.  We denote the restriction of scalars of $N$ to $K \otimes_{k} 1$ (resp. $1\otimes_{k} L$) by ${}_{K}N$ (resp. $N_{L}$).  If $\operatorname{dim}_{K}({}_{K}N)=m$ and $\operatorname{dim}_{L}(N_{L})=n$, we say $N$ is an $(m,n)$-{\it bimodule}.  For the remainder of the paper, all bimodules will be $k$-central with $m,n < \infty$.  We will use the fact that if $K'$ is another finite extension of $k$, $M$ is a $K-L$-bimodule with left basis $\{ v_{1},\ldots,v_{m} \}$ and $N$ is an $L-K'$-bimodule with left basis $\{ w_{1}, \ldots, w_{n} \}$, then $\{v_{i}\otimes w_{j} | 1 \leq i \leq m \mbox{ and } 1 \leq j \leq n \}$ is a left basis for $M \otimes_{L} N$.

We need to recall (from \cite{vandenbergh}) the notion of left and right dual of $N$.  The {\it right dual of $N$}, denoted $N^{*}$, is the set
$\operatorname{Hom}_{L}(N_{L},L)$ with action
$$
(a \cdot \psi \cdot b)(n)=a\psi(bn)
$$
for all $\psi \in \operatorname{Hom}_{L}(N_{L},L)$, $a\in L$ and $b \in K$.  We note that
$N^{*}$ is a $L \otimes_{k}K$-module.

The {\it left dual of $N$}, denoted ${}^{*}N$, is the set
$\operatorname{Hom}_{K}({}_{K}N,K)$ with action
$$
(a \cdot \phi
\cdot b)(n)=b \phi(na)
$$
for all $\phi \in \operatorname{Hom}_{K}({}_{K}N,K)$, $a \in L$ and $b \in K$.  As above,
${}^{*}N$ is a $L \otimes_{k}K$-module.  This assignment extends to morphisms between $k$-central $K-L$-bimodules in the obvious way.

We set
$$
N^{i*}:=
\begin{cases}
N & \text{if $i=0$}, \\
(N^{i-1*})^{*} & \text{ if $i>0$}, \\
{}^{*}(N^{i+1*}) & \text{ if $i<0$}.
\end{cases}
$$

As in \cite[Proposition 3.7]{hart}, for each $i$, both pairs of functors
$$
(-\otimes N^{i*},-\otimes N^{i+1*})
$$
and
$$
(-\otimes {}^{*}(N^{i+1*}), -\otimes N^{i+1*})
$$
between the category of $K$-modules and the category of $L$-modules, have adjoint structures.  The Eilenberg-Watts theorem implies that the units of these adjoint pairs induce maps of bimodules, and to describe them we introduce some notation.  We let
$$
F_{i} = \begin{cases}
K & \mbox{if $i$ is even, and} \\
L & \mbox{if $i$ is odd.}
\end{cases}
$$
We suppose $\{\phi_{1},\ldots,\phi_{n}\}$ is a right basis for $N^{i*}$ and $\{f_{1},\ldots, f_{n}\}$ is the corresponding dual left basis for $N^{i+1*}$.  We suppose $\{\phi'_{1},\ldots,\phi'_{n}\}$ is the right basis for ${}^{*}(N^{i+1*})$ dual to $\{f_{1},\ldots, f_{n}\}$.

The unit of the first adjoint pair induces $\eta_{i,N}: F_{i} \longrightarrow N^{i*} \otimes  N^{i+1*}$, defined by
$$
\eta_{i,N}(a) = a \sum_{j} \phi_{j} \otimes f_{j}
$$
while the unit of the second adjoint pair induces $\eta_{i,N}':F_{i} \longrightarrow {}^{*}(N^{i+1*}) \otimes N^{i+1*}$ defined by
$$
\eta_{i,N}'(a) = a \sum_{j} \phi'_{j} \otimes f_{j}.
$$
We denote the image of $\eta_{i,N}$ by $Q_{i,N}$ and the image of $\eta_{i,N}'$ by $Q_{i,N}'$.  The subscript $N$ will be dropped when there is no chance for confusion.

The next result follows immediately from \cite[Lemma 6.6]{duality}.
\begin{lemma} \label{lemma.eta}
Let $N$ and $P$ be $k$-central $K-L$-bimodules.
\begin{enumerate}
\item{} Given an isomorphism $\Gamma: N^{*} \longrightarrow P^{*}$, there exists a unique isomorphism $\Delta: N \longrightarrow {}^{*}(P^{*})$ such that $(\Delta \otimes \Gamma) (Q_{0,N})=Q'_{0,P}$.

\item{} Given an isomorphism $\Delta: {}^{*}N \longrightarrow {}^{*}P$ (resp. $\Gamma: N \longrightarrow P$), there exists a unique isomorphism $\Gamma: N \longrightarrow P$ (resp. $\Delta:{}^{*}N \longrightarrow {}^{*}P$) such that $(\Delta \otimes \Gamma) (Q_{-1,N})=Q_{-1,P}$.
\end{enumerate}
\end{lemma}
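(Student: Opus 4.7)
The plan is to interpret $Q_{0,N}$ and $Q_{-1,N}$ as data encoding the units of the relevant adjunctions evaluated at the unit objects, and then to invoke uniqueness of adjoint functors up to natural isomorphism combined with the Eilenberg--Watts theorem to transport an isomorphism of one adjoint into a unique isomorphism of the other.

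For part (1), I would first observe that $\eta_{0,N}:F_0 \to N \otimes N^*$ is (a component of) the unit of the adjunction $(-\otimes N, -\otimes N^*)$, and similarly $\eta'_{0,P}:F_0 \to {}^*(P^*) \otimes P^*$ is (a component of) the unit of $(-\otimes {}^*(P^*), -\otimes P^*)$. Given an isomorphism $\Gamma:N^* \to P^*$, tensoring produces a natural isomorphism of right adjoint functors $-\otimes N^* \xrightarrow{\cong} -\otimes P^*$. Since left adjoints are unique up to unique natural isomorphism, the corresponding left adjoints $-\otimes N$ and $-\otimes {}^*(P^*)$ are canonically naturally isomorphic, and by the Eilenberg--Watts theorem this natural isomorphism arises from a unique bimodule isomorphism $\Delta:N \to {}^*(P^*)$. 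The naturality of the correspondence between the units of the two adjunctions, evaluated at $F_0$, says precisely that $(\Delta \otimes \Gamma) \circ \eta_{0,N} = \eta'_{0,P}$, which translates to $(\Delta \otimes \Gamma)(Q_{0,N}) = Q'_{0,P}$, and uniqueness is inherited at each step.

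For part (2), the argument is symmetric but more direct: the single adjoint pair $(-\otimes {}^*N, -\otimes N)$ has unit $\eta_{-1,N}:F_{-1} \to {}^*N \otimes N$ whose image is $Q_{-1,N}$. Specifying an isomorphism of either the left adjoint bimodule ${}^*N \to {}^*P$ or the right adjoint bimodule $N \to P$ induces the other via the same uniqueness-of-adjoints principle, and the compatibility with units of adjunction yields the condition $(\Delta \otimes \Gamma)(Q_{-1,N}) = Q_{-1,P}$.

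The only genuinely delicate step is the bookkeeping: one must verify that the abstract unit of an adjunction between tensor-product functors, when evaluated on $F_i$, literally produces the explicit element $\sum_j \phi_j \otimes f_j$ (resp.\ $\sum_j \phi'_j \otimes f_j$) described in the formulas for $\eta_{i,N}$ and $\eta'_{i,N}$. Once this dual-basis identification is in place, the rest of the argument is formal, which is why the lemma can be extracted immediately from the general duality machinery of \cite[Lemma 6.6]{duality}.
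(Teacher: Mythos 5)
The paper does not actually prove this lemma: it is dispatched with the single sentence ``The next result follows immediately from \cite[Lemma 6.6]{duality}'', so the real argument lives in the author's earlier Serre-duality paper. Your proposal is, in effect, a reconstruction of that external argument, and it is the right one: an isomorphism of one member of the adjoint pair $(-\otimes N,\,-\otimes N^{*})$ (resp.\ $(-\otimes {}^{*}N,\,-\otimes N)$) determines, by uniqueness of adjoints together with Eilenberg--Watts, a unique isomorphism of the other member compatible with the units, and the explicit dual-basis formula for $\eta_{i,N}$ identifies ``compatible with the units'' with the stated condition on the $Q$'s. So the approach is sound and is the intended one.

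One point deserves care, and it is not mere bookkeeping. Your adjunction argument produces the \emph{unit-level} identity $(\Delta\otimes\Gamma)\circ\eta_{0,N}=\eta'_{0,P}$, i.e.\ equality of the canonical generators $\sum_{j}\Delta(\phi_{j})\otimes\Gamma(f_{j})=\sum_{j}\phi'_{j}\otimes g_{j}$, and uniqueness of $\Delta$ holds relative to \emph{that} condition. The condition as literally written in the lemma is only equality of the image submodules $Q$, which is strictly weaker: since $K$ is commutative, left multiplication by any $c\in K^{*}$ is a bimodule automorphism of $N$ carrying $Q_{0,N}$ onto itself, so $c\Delta$ satisfies the submodule condition whenever $\Delta$ does. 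Your closing remark that ``uniqueness is inherited at each step'' therefore proves uniqueness for the unit-level formulation, not for the submodule formulation; you should state explicitly that the characterizing condition is the equality of maps (equivalently, of the distinguished generators), which is in any case the form used later in the paper when Proposition \ref{prop.01} pins down $\Psi_{i+1}$. With that clarification your argument is complete.
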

We now introduce a convention that will be in effect throughout this paper:  all unadorned tensor products will be bimodule tensor products over the appropriate base ring.

We next recall the definition of $\mathbb{Z}$-algebra from \cite[Section 2]{quadrics}: a $\mathbb{Z}$-{\it algebra} is a ring $A$ with decomposition $A=\oplus_{i,j \in \mathbb{Z}}A_{ij}$ into $k$-vector spaces, such that multiplication has the property $A_{ij}A_{jk} \subset A_{ik}$ while $A_{ij}A_{kl}=0$ if $j \neq k$.  Furthermore, for $i \in \mathbb{Z}$, there is a local unit $e_{i} \in A_{ii}$, such that if $a \in A_{ij}$, then $e_{i}a=a=ae_{j}$.

Now we are finally ready to recall (from \cite{vandenbergh}) the definition of the noncommutative symmetric algebra of $N$.  All tensor products will be defined either over $K$ or $L$, and the choice will be clear from context.  The {\it noncommutative symmetric algebra generated by $N$}, denoted $\mathbb{S}^{n.c.}(N)$, is the $\mathbb{Z}$-algebra $\underset{i,j \in
\mathbb{Z}}{\oplus}A_{ij}$ with components defined as follows:
\begin{itemize}
\item{} $A_{ij}=0$ if $i>j$.

\item{} $A_{ii}=K$ for $i$ even,

\item{} $A_{ii}=L$ for $i$ odd, and

\item{} $A_{i i+1}=N^{i*}$.
\end{itemize}
In order to define $A_{ij}$ for $j>i+1$, we introduce some notation: we define $T_{i i+1} := A_{i i+1}$, and, for $j>i+1$, we define
$$
T_{ij} := A_{i i+1} \otimes A_{i+1 i+2} \otimes \cdots \otimes A_{j-1 j}.
$$
We let $R_{i i+1}:= 0$, $R_{i i+2}:=Q_{i}$,
$$
R_{i i+3}:=Q_{i} \otimes N^{(i+2)*}+N^{i*} \otimes Q_{i+1},
$$
and, for $j>i+3$, we let
$$
R_{ij} := Q_{i} \otimes T_{i+2 j}+T_{i i+1}\otimes Q_{i+1} \otimes T_{i+3 j}+\cdots + T_{i j-2} \otimes Q_{j-2}.
$$

\begin{itemize}
\item{} For $j>i+1$, we define $A_{ij}$ as the quotient $T_{ij}/R_{ij}$.
\end{itemize}
Multiplication in $\mathbb{S}^{n.c.}(N)$ is defined as follows:
\begin{itemize}

\item{} if $x \in A_{ij}$, $y \in A_{lk}$ and $j \neq l$, then $xy=0$,

\item{} if $x \in A_{ij}$ and $y \in A_{jk}$, with either $i=j$ or $j=k$, then $xy$ is induced by the usual scalar action,

\item{}  otherwise, if $i<j<k$, we have
\begin{eqnarray*}
A_{ij} \otimes A_{jk} & = & \frac{T_{ij}}{R_{ij}} \otimes \frac{T_{jk}}{R_{jk}}\\
& \cong & \frac{T_{ik}}{R_{ij}\otimes T_{jk}+T_{ij} \otimes
R_{jk}}.
\end{eqnarray*}
Since $R_{ij} \otimes T_{jk}+T_{ij} \otimes R_{jk}$ is a submodule of $R_{ik}$,
there is thus an epi $\mu_{ijk}:A_{ij} \otimes A_{jk} \longrightarrow
A_{ik}$.
\end{itemize}

\section{Proof of the Main Theorem}
Our goal in this section is to prove Theorem \ref{thm.main}.  We begin the section with a description of the notation we will utilize as well as a statement of our assumptions.  Throughout the rest of this paper, ${\sf H}$ will denote a homogeneous noncommutative curve of genus zero, and all unadorned $\operatorname{Hom}$'s will be over ${\sf H}$.

A {\it bundle} in ${\sf H}$ is an object that doesn't have a simple subobject.  A {\it line bundle} in ${\sf H}$ is bundle of rank one (see \cite[p.136]{lenzingreiten} for the definition of rank).
\begin{itemize}
\item{} We let $\mathcal{L}$ denote a line bundle in ${\sf H}$. There is an indecomposable bundle $\overline{\mathcal{L}}$ and an irreducible morphism $\mathcal{L} \longrightarrow \overline{\mathcal{L}}$ coming from the AR sequence starting at $\mathcal{L}$ \cite[1.1.2]{kussin}.

\item{} We let $\tau^{-1}$ denote a fixed quasi-inverse of the Auslander-Reiten translation, $\tau$, of ${\sf H}$.

\item{} We let $M$ denote the $\operatorname{End }(\overline{\mathcal{L}})-\operatorname{End}(\mathcal{L})$-bimodule $\operatorname{Hom}(\mathcal{L},\overline{\mathcal{L}})$.

\item{} We assume that $\operatorname{End}(\overline{\mathcal{L}})$ and $\operatorname{End}(\mathcal{L})$ are commutative.
\end{itemize}
We recall that $\operatorname{End}(\mathcal{L})$ and $\operatorname{End }(\overline{\mathcal{L}})$ are automatically division rings of finite dimension over $k$ by \cite[Lemma 1.3]{lenzingreiten}.  Therefore, by the above assumption, they are finite extension fields of $k$.

If $\mathcal{N}$ is an indecomposable bundle, we let
\begin{equation} \label{eqn.arsequence}
0 \longrightarrow \mathcal{N} \overset{h}{\longrightarrow} \mathcal{E} \overset{p}{\longrightarrow} \tau^{-1}\mathcal{N} \longrightarrow 0.
\end{equation}
be an AR sequence starting from $\mathcal{N}$.  One can show, using results from \cite[Section 1.1.2]{kussin}, that the only indecomposable bundles in ${\sf H}$ are of the form $\tau^{i}\mathcal{L}$ and $\tau^{i}\overline{\mathcal{L}}$, and  $\mathcal{E}$ is given according to the following table, where $(m,n)$ denotes the left-right dimension of $M$:
\begin{center}
\begin{tabular}{|c|c|c|}
\hline
$(m,n)$ & $\mathcal{N}$ & $\mathcal{E}$ \\ \hline \hline
$(2,2)$ & $\tau^{i}\mathcal{L}$ &  $\tau^{i}\overline{\mathcal{L}}^{\oplus 2}$ \\ \hline
$(2,2)$ & $\tau^{i}\overline{\mathcal{L}}$ &  $\tau^{i-1}{\mathcal{L}}^{\oplus 2}$ \\ \hline
$(1,4)$ & $\tau^{i}{\mathcal{L}}$ &  $\tau^{i}\overline{\mathcal{L}}$ \\ \hline
$(1,4)$ & $\tau^{i}\overline{\mathcal{L}}$ &  $\tau^{i-1}{\mathcal{L}}^{\oplus 4}$ \\ \hline
\end{tabular}
\end{center}
It is not hard to show, using the factorization property of AR sequences (see \cite[Proposition 2.1]{dlab}), that if we write $\mathcal{E} = \mathcal{P}^{\oplus n}$ where $\mathcal{P}$ is indecomposable, then the components of the map $h$ in (\ref{eqn.arsequence}) are a left basis for $\operatorname{Hom}(\mathcal{N},\mathcal{P})$.

We will also need the fact that, according to \cite[Section 1.1.2]{kussin},
\begin{equation} \label{eqn.vanish}
\operatorname{Hom}(\mathcal{L},\mathcal{N})=0
\end{equation}
if $\mathcal{N}$ is either $\tau^{i}\mathcal{L}$ or $\tau^{i}\overline{\mathcal{L}}$ and $i>0$.  Similarly,
\begin{equation} \label{eqn.vanish2}
\operatorname{Hom}(\overline{\mathcal{L}},\mathcal{N})=0
\end{equation}
if $\mathcal{N}$ is either $\tau^{i}\mathcal{L}$ and $i \geq 0$ or $\tau^{i}\overline{\mathcal{L}}$ and $i>0$.

Let $\mathcal{N}$ be an indecomposable bundle and let $\mathcal{P}$ be an indecomposable summand of $\mathcal{E}$ in (\ref{eqn.arsequence}).  Although it is mentioned in \cite{kussin} that
\begin{equation} \label{eqn.leftdual}
{}^{*}\operatorname{Hom}(\mathcal{N},\mathcal{P}) \cong \operatorname{Hom}(\mathcal{P}, \tau^{-1}\mathcal{N}),
\end{equation}
the $\operatorname{End}(\mathcal{N})$-module structure on $\operatorname{Hom}(\mathcal{P}, \tau^{-1}\mathcal{N})$ is not explicitly described.  Since it will be convenient for us to describe this structure, we reconstruct (\ref{eqn.leftdual}) in Proposition \ref{prop.phi}.  The proof of Proposition \ref{prop.phi} is an adaptation of \cite[Proposition 2.1]{dlab} to our setting.  We will need the following

\begin{lemma} \label{lemma.lstruct1}
Suppose $\mathcal{N}$ is an indecomposable object in ${\sf H}$.  If $a \in \operatorname{End}(\mathcal{N})$, then there exists a unique $f_{a} \in \operatorname{End}(\mathcal{E})$ such that the diagram
$$
\begin{CD}
\mathcal{N} & \overset{h}{\longrightarrow} & \mathcal{E}  \\
@V{a}VV  @VV{f_{a}}V \\
\mathcal{N} & \underset{h}{\longrightarrow} & \mathcal{E}
\end{CD}
$$
whose horizontals are from (\ref{eqn.arsequence}), commutes.  In addition, the function $a \mapsto f_{a}$ is a $k$-algebra homomorphism.
\end{lemma}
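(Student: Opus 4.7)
The plan is to establish existence via the defining property of the Auslander--Reiten sequence, then derive uniqueness from a cokernel argument combined with the vanishing statements (\ref{eqn.vanish}) and (\ref{eqn.vanish2}), and finally obtain the algebra homomorphism properties as formal consequences of uniqueness.

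For existence, I would consider $h \circ a : \mathcal{N} \longrightarrow \mathcal{E}$ and argue it is not a split monomorphism; then the AR property applied to the sequence (\ref{eqn.arsequence}) guarantees a factorization $h \circ a = f_{a} \circ h$, giving the required $f_{a}$. To see $h \circ a$ is not a split monomorphism, recall that since $\mathcal{N}$ is indecomposable and Hom/Ext spaces in ${\sf H}$ are finite-dimensional over $k$, $\operatorname{End}(\mathcal{N})$ is local, so $a$ is either a unit or lies in $\operatorname{rad}(\operatorname{End}(\mathcal{N}))$. If $a$ were a unit and $s : \mathcal{E} \to \mathcal{N}$ split $h \circ a$, then $a \circ s$ would split $h$, contradicting the non-split nature of (\ref{eqn.arsequence}); if $a \in \operatorname{rad}$, then $1_{\mathcal{N}} = s \circ h \circ a$ would lie in the radical, a contradiction.

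For uniqueness, suppose $f, f' \in \operatorname{End}(\mathcal{E})$ both satisfy $f \circ h = f' \circ h = h \circ a$. Then $(f - f') \circ h = 0$, so since $p$ is the cokernel of $h$ by (\ref{eqn.arsequence}), the morphism $f - f'$ factors as $g \circ p$ for some $g : \tau^{-1}\mathcal{N} \longrightarrow \mathcal{E}$. It thus suffices to show $\operatorname{Hom}(\tau^{-1}\mathcal{N},\mathcal{E}) = 0$. Using that $\tau$ is an autoequivalence and consulting the table of possible $(\mathcal{N},\mathcal{E})$ pairs, this reduces in each of the four cases to either $\operatorname{Hom}(\mathcal{L},\tau\overline{\mathcal{L}})=0$ or $\operatorname{Hom}(\overline{\mathcal{L}},\mathcal{L})=0$, which are direct instances of (\ref{eqn.vanish}) and (\ref{eqn.vanish2}).

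For the algebra homomorphism property, the identities $1_{\mathcal{E}} \circ h = h \circ 1_{\mathcal{N}}$, $(f_{a}+f_{b}) \circ h = h \circ (a+b)$, $(\lambda f_{a}) \circ h = h \circ (\lambda a)$ for $\lambda \in k$, and $(f_{a} \circ f_{b}) \circ h = f_{a} \circ h \circ b = h \circ (ab)$ are all immediate from the defining property of the $f_{(-)}$; uniqueness then forces $f_{1_{\mathcal{N}}} = 1_{\mathcal{E}}$, $f_{a+b} = f_{a}+f_{b}$, $f_{\lambda a} = \lambda f_{a}$, and $f_{ab} = f_{a} \circ f_{b}$. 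The only step requiring real care is the case analysis for uniqueness, but given the explicit table of AR sequences together with (\ref{eqn.vanish}) and (\ref{eqn.vanish2}), this is routine rather than a genuine obstacle.
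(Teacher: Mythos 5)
Your proof is correct, but it takes a different route from the paper's, most notably for uniqueness. The paper writes $\mathcal{E}=\mathcal{P}^{\oplus n}$ and uses the fact (recorded just before the lemma, and itself a consequence of the AR factorization property via \cite[Proposition 2.1]{dlab}) that the components $f_{1},\ldots,f_{n}$ of $h$ form a left basis of $\operatorname{Hom}(\mathcal{N},\mathcal{P})$ over $\operatorname{End}(\mathcal{P})$: one expands $f_{i}a=\sum_{j}a_{ij}f_{j}$, defines $f_{a}$ by the matrix $(a_{ij})$, and gets uniqueness from left independence of the $f_{j}$. You instead invoke the left almost split property of $h$ directly to produce $f_{a}$ (your verification that $h\circ a$ is not a split mono via locality of $\operatorname{End}(\mathcal{N})$ is sound, and is even simpler here since $\operatorname{End}(\mathcal{N})$ is a division ring by \cite[Lemma 1.3]{lenzingreiten}), and you derive uniqueness from $p=\operatorname{coker}(h)$ together with $\operatorname{Hom}(\tau^{-1}\mathcal{N},\mathcal{E})=0$, which does check out in all four rows of the table via (\ref{eqn.vanish}) and (\ref{eqn.vanish2}). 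Your uniqueness argument is arguably more conceptual and does not need the basis statement, whereas the paper's explicit matrix description of $f_{a}$ is reused later (e.g.\ in equation (\ref{eqn.leftright}) in the proof of Proposition \ref{prop.phi}), so the paper's formulation is not just a proof device but supplies data needed downstream. One small caveat: your case analysis relies on the table, which is stated for indecomposable \emph{bundles}; that is the intended scope of the lemma (the AR sequence (\ref{eqn.arsequence}) is set up for indecomposable bundles), so this is fine in context, but worth noting since the lemma as literally stated says ``indecomposable object.''
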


\begin{proof}
We write $\mathcal{E}=\mathcal{P}^{\oplus n}$, where $\mathcal{P}$ is an indecomposable bundle, and we use the fact that the components of $h=(f_{1},\ldots,f_{n})$ are a left basis for $\operatorname{Hom}(\mathcal{N},\mathcal{P})$.  Therefore, if $a \in \operatorname{End}(\mathcal{N})$, there exist $a_{ij}$ in $\operatorname{End}(\mathcal{P})$ such that
\begin{equation} \label{eqn.lr}
f_{i}a = \sum_{j} a_{ij}f_{j}.
\end{equation}
We let $f_{a} \in \operatorname{End}(\mathcal{P}^{\oplus n})$ denote the morphism uniquely determined by the fact that if $g_{i}:\mathcal{P} \longrightarrow \mathcal{P}^{\oplus n}$ denotes the $i$th inclusion, then $f_{a} g_{i} = (a_{1i},\ldots,a_{ni})$.  Uniqueness of $f_{a}$ follows from the fact that $\{f_{1},\ldots,f_{n}\}$ is a left basis for $\operatorname{Hom}(\mathcal{N},\mathcal{P})$.

The proof of the fact that the function $a \mapsto f_{a}$ is a $k$-algebra homomorphism is routine and omitted.
\end{proof}

\begin{prop} \label{prop.phi}
Let $\mathcal{P}$ be an indecomposable summand of $\mathcal{E}$ in (\ref{eqn.arsequence}).  There exists a $k$-algebra homomorphism
$$
\Phi:\operatorname{End}(\mathcal{N}) \longrightarrow \operatorname{End}(\tau^{-1}\mathcal{N})
$$
endowing $\operatorname{Hom}(\mathcal{P},\tau^{-1}\mathcal{N})$ with an $\operatorname{End}(\mathcal{N})-\operatorname{End}(\mathcal{P})$-bimodule structure.  With this structure, there is an isomorphism of $\operatorname{End}(\mathcal{N})-\operatorname{End}(\mathcal{P})$-bimodules
$$
\Psi: {}^{*}\operatorname{Hom}(\mathcal{N},\mathcal{P}) \longrightarrow \operatorname{Hom}(\mathcal{P},\tau^{-1}\mathcal{N}).
$$
Furthermore, $\Phi$ is induced by $\tau^{-1}$.
\end{prop}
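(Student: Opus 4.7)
The plan is to construct $\Phi$ from the universal property of the cokernel $p$, define $\Psi$ explicitly using the bases coming from the AR sequence, and then verify bimodule-linearity and bijectivity separately.

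To construct $\Phi$: given $a \in \operatorname{End}(\mathcal{N})$, Lemma~\ref{lemma.lstruct1} produces the unique $f_{a} \in \operatorname{End}(\mathcal{E})$ with $f_{a} h = h a$, so $(p f_{a}) h = p(h a) = 0$. Since $p$ is the cokernel of $h$, this forces a unique $\Phi(a) \in \operatorname{End}(\tau^{-1}\mathcal{N})$ satisfying $\Phi(a) p = p f_{a}$. Uniqueness of $\Phi(a)$ and the multiplicativity of $a \mapsto f_{a}$ established in Lemma~\ref{lemma.lstruct1} make $\Phi$ a $k$-algebra homomorphism. For the ``furthermore'' clause, note that $\tau^{-1}(a)$ satisfies the same commutation relation with $p$ by naturality of the AR sequence under the autoequivalence $\tau^{-1}$, and uniqueness identifies $\Phi(a)$ with $\tau^{-1}(a)$. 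Using $\Phi$, we endow $\operatorname{Hom}(\mathcal{P}, \tau^{-1}\mathcal{N})$ with its right $\operatorname{End}(\mathcal{P})$-action by precomposition and its left $\operatorname{End}(\mathcal{N})$-action $a \cdot g := \Phi(a) \circ g$.

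Next, writing $\mathcal{E} = \mathcal{P}^{\oplus n}$ with $h$ having components $f_{1}, \ldots, f_{n}$ (a left $\operatorname{End}(\mathcal{P})$-basis for $\operatorname{Hom}(\mathcal{N}, \mathcal{P})$ by the discussion after~(\ref{eqn.arsequence})) and $p$ having components $p_{1}, \ldots, p_{n}$, we set
$$
\Psi(\phi) := \sum_{i} p_{i} \circ \phi(f_{i}).
$$
From $\Phi(a) p = p f_{a}$ together with the explicit formula for $f_{a}$ from the proof of Lemma~\ref{lemma.lstruct1}, one extracts the identity $\Phi(a) p_{j} = \sum_{i} p_{i} a_{ij}$; combined with~(\ref{eqn.lr}) and the left $\operatorname{End}(\mathcal{P})$-linearity of $\phi$, this makes $\Psi$ left $\operatorname{End}(\mathcal{N})$-linear, both sides reducing to $\sum_{i,j} p_{i} a_{ij} \phi(f_{j})$. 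Right $\operatorname{End}(\mathcal{P})$-linearity expands to the assertion $\sum_{i} p_{i} \circ b \circ \phi(f_{i}) = \sum_{i} p_{i} \circ \phi(f_{i}) \circ b$, which requires $b$ to commute with each $\phi(f_{i})$ inside $\operatorname{End}(\mathcal{P})$; this is precisely where the commutativity hypothesis on $\operatorname{End}(\mathcal{P})$ enters.

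Finally, for injectivity: if $\Psi(\phi) = 0$, then $\alpha := \sum_{i} \iota_{i} \phi(f_{i}) : \mathcal{P} \to \mathcal{E}$ satisfies $p \alpha = 0$ and hence factors as $\alpha = h \beta$ for some $\beta \in \operatorname{Hom}(\mathcal{P}, \mathcal{N})$. A case check on the four rows of the table, using (\ref{eqn.vanish}) and (\ref{eqn.vanish2}), shows $\operatorname{Hom}(\mathcal{P}, \mathcal{N}) = 0$ in every case, so $\alpha = 0$, each $\phi(f_{i}) = 0$, and $\phi = 0$. Surjectivity will follow from a $k$-dimension count: the same case check gives $\operatorname{Hom}(\mathcal{N}, \tau\mathcal{P}) = 0$, so Serre duality yields $\operatorname{Ext}^{1}(\mathcal{P}, \mathcal{N}) = 0$, and the long exact sequence obtained by applying $\operatorname{Hom}(\mathcal{P}, -)$ to (\ref{eqn.arsequence}) collapses to $\operatorname{Hom}(\mathcal{P}, \mathcal{E}) \cong \operatorname{Hom}(\mathcal{P}, \tau^{-1}\mathcal{N})$, of $k$-dimension $n \cdot \dim_{k} \operatorname{End}(\mathcal{P}) = \dim_{k} {}^{*}\operatorname{Hom}(\mathcal{N}, \mathcal{P})$. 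The main obstacle is the right-linearity step, where the commutativity of $\operatorname{End}(\mathcal{P})$ is genuinely used; the rest of the argument is either formal (the $\Phi$ construction) or a routine table-walk (the vanishings).
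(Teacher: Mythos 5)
Your proposal is correct and follows essentially the same route as the paper: $\Phi$ is built from Lemma \ref{lemma.lstruct1} and the cokernel property of $p$; your closed formula $\Psi(\phi)=\sum_i p_i\circ\phi(f_i)$ is exactly the right-linear extension of the paper's assignment $\phi_m\mapsto p\circ g_m$ on the dual basis; injectivity comes from $\operatorname{Hom}(\mathcal{P},\mathcal{N})=0$; and surjectivity comes from the same dimension count using $\operatorname{Ext}^1(\mathcal{P},\mathcal{N})\cong D\operatorname{Hom}(\mathcal{N},\tau\mathcal{P})=0$. The one place your argument is thinner than it looks is the ``furthermore'' clause: the identity $\tau^{-1}(a)\circ p=p\circ f_a$ is not formal naturality but a genuine compatibility between the AR sequence and the functor $\tau^{-1}$, for which the paper appeals to \cite[Corollary 4.2]{newkussin}.
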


\begin{proof}
We define the $k$-algebra homomorphism $\Phi:\operatorname{End}(\mathcal{N}) \longrightarrow \operatorname{End}(\tau^{-1}\mathcal{N})$ as follows:  given $a \in \operatorname{End}(\mathcal{N})$, we get a unique $f_{a} \in \operatorname{End}(\mathcal{E})$ such that the diagram
$$
\begin{CD}
\mathcal{N} & \overset{h}{\longrightarrow} & \mathcal{E} \\
@V{a}VV  @VV{f_{a}}V \\
\mathcal{N} & \underset{h}{\longrightarrow} & \mathcal{E}
\end{CD}
$$
commutes by Lemma \ref{lemma.lstruct1}.  It follows that there is a unique $g_{a} \in \operatorname{End}(\tau^{-1}\mathcal{N})$ making
\begin{equation} \label{eqn.ses}
\begin{CD}
\mathcal{E} & \overset{p}{\longrightarrow} & \tau^{-1}\mathcal{N}  \\
@VV{f_{a}}V @VV{g_{a}}V \\
\mathcal{E} & \overset{p}{\longrightarrow} & \tau^{-1}\mathcal{N}
\end{CD}
\end{equation}
commute.  We define $\Phi(a) := g_{a}$.  The proof that $\Phi(a)$ is a $k$-algebra homomorphism is routine and omitted.  Finally, the fact that $g_{a}=\tau^{-1}(a)$ is an easy application of \cite[Corollary 4.2]{newkussin} and we omit the details.

Next, we construct $\Psi:{}^{*}\operatorname{Hom}(\mathcal{N},\mathcal{P}) \longrightarrow \operatorname{Hom}(\mathcal{P},\tau^{-1}\mathcal{N})$.  To this end, suppose $\{\phi_{1},\ldots, \phi_{n}\}$ is the right basis for ${}^{*}\operatorname{Hom}(\mathcal{N},\mathcal{P})$ dual to a left basis $\{f_{1}, \ldots, f_{n}\}$ of $\operatorname{Hom}(\mathcal{N},\mathcal{P})$.  We define $\Psi$ by letting it send $\phi_{m}$ to $p \circ g_{m}$ where $g_{m}$ is inclusion of the $m$th factor of $\mathcal{P}$ in $\mathcal{E}$, and we extend right-linearly.

We need to show $\Psi$ is one-to-one, onto, and compatible with left multiplication.  We show first that $\Psi$ is one-to-one.  Suppose $a_{1}, \ldots, a_{n} \in \operatorname{End}(\mathcal{P})$ are such that
$$
p(\sum_{i} g_{i}a_{i})=0.
$$
Then $\sum_{i} g_{i}a_{i}:\mathcal{P} \longrightarrow \mathcal{E}$ factors through the kernel of $p$.  Since $\operatorname{Hom}(\mathcal{P},\mathcal{N})=0$ by (\ref{eqn.vanish}), it follows that $a_{1}=\cdots=a_{n}=0$.  Therefore, $\Psi$ is one-to-one.  The fact that $\Psi$ is onto will follow from the fact that the right dimension of $\operatorname{Hom}(\mathcal{P},\tau^{-1}\mathcal{N})$ is equal to $n$.  To prove this, we consider the long exact sequence resulting from applying the functor $\operatorname{Hom}(\mathcal{P},-)$ to the short exact sequence (\ref{eqn.arsequence}), and note that $0=D\operatorname{Hom}(\mathcal{N},\tau \mathcal{P}) \cong \operatorname{Ext}^{1}(\mathcal{P},\mathcal{N})$ by (\ref{eqn.vanish}) and (\ref{eqn.vanish2}).

We now show that $\Psi$ is compatible with the $\operatorname{End}(\mathcal{N})$-object structure.  To this end, we suppose the right action on the left basis $\{f_{1},\ldots, f_{n} \}$ is given by (\ref{eqn.lr}).  Then, by \cite[Lemma 3.4]{hart}, the left action on the right basis $\{\phi_{1}, \ldots, \phi_{n}\}$ of ${}^{*}\operatorname{Hom}(\mathcal{N},\mathcal{P})$ is given by
$$
a \phi_{i} = \sum_{j}\phi_{j} a_{ji}.
$$
Therefore, by definition of $\Psi$ we have
\begin{eqnarray*}
\Psi(a \cdot (\sum_{i}\phi_{i}b_{i})) & = & \Psi(\sum_{i,j}\phi_{j}a_{ji} b_{i}) \\
& = & p (\sum_{j}g_{j}(\sum_{i}a_{ji} b_{i})).
\end{eqnarray*}

On the other hand, if $b_{1},\ldots, b_{n} \in \operatorname{End}(\mathcal{P})$, then by definition of $f_{a}$ from Lemma \ref{lemma.lstruct1} we have
\begin{equation} \label{eqn.leftright}
f_{a}(b_{1},\ldots, b_{n})=(\sum_{i}a_{1i} b_{i},\ldots, \sum_{i}a_{ni} b_{i}).
\end{equation}
Therefore,
\begin{eqnarray*}
a \cdot (\sum_{i} p g_{i}b_{i}) & = & p (f_{a}(\sum_{i} g_{i}b_{i})) \\
& = & p (\sum_{j}g_{j}(\sum_{i}a_{ji} b_{i})).
\end{eqnarray*}
as desired.
\end{proof}
For the remainder of this section, $\Phi$ and $\Psi$ will refer to the maps defined in Proposition \ref{prop.phi}.

The following is an adaptation of \cite[Proposition 2.2]{dlab}.
\begin{lemma} \label{lemma.relations}
Let $\mathcal{P}$ be an indecomposable summand of $\mathcal{E}$ in (\ref{eqn.arsequence}).  Let $\{f_{1},\ldots,f_{n}\}$ denote a left basis for $\operatorname{Hom}(\mathcal{N},\mathcal{P})$, and let $\{\phi_{1}, \ldots, \phi_{n}\}$ denote the corresponding right dual basis for ${}^{*}\operatorname{Hom}(\mathcal{N},\mathcal{P})$.  Then, under the composition
\begin{eqnarray*}
{}^{*}\operatorname{Hom}(\mathcal{N},\mathcal{P}) \otimes_{\operatorname{End}(\mathcal{P})} \operatorname{Hom}(\mathcal{N},\mathcal{P}) & \overset{\Psi \otimes 1}{\longrightarrow} & \operatorname{Hom}(\mathcal{P},\tau^{-1}\mathcal{N}) \otimes_{\operatorname{End}(\mathcal{P})} \operatorname{Hom}(\mathcal{N},\mathcal{P}) \\
& \longrightarrow & \operatorname{Hom}(\mathcal{N},\tau^{-1}\mathcal{N})
\end{eqnarray*}
whose second arrow is induced by composition, the element $\sum_{i}\phi_{i} \otimes f_{i}$ goes to zero.
\end{lemma}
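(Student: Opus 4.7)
The plan is to unwind the definitions of $\Psi$ and of the composition map, and then to invoke the exactness of the AR sequence (\ref{eqn.arsequence}) at $\mathcal{E}$. Concretely, I would first apply $\Psi \otimes 1$ to the element $\sum_{i} \phi_{i} \otimes f_{i}$. By the construction of $\Psi$ in Proposition \ref{prop.phi}, $\Psi(\phi_{i}) = p \circ g_{i}$, where $g_{i}\colon \mathcal{P} \longrightarrow \mathcal{E}$ is the inclusion of the $i$th summand. Hence the element is sent to $\sum_{i} (p \circ g_{i}) \otimes f_{i}$ in $\operatorname{Hom}(\mathcal{P}, \tau^{-1}\mathcal{N}) \otimes_{\operatorname{End}(\mathcal{P})} \operatorname{Hom}(\mathcal{N}, \mathcal{P})$.

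Next I would apply the composition map, obtaining $\sum_{i} p \circ g_{i} \circ f_{i}$ in $\operatorname{Hom}(\mathcal{N}, \tau^{-1}\mathcal{N})$. The key observation is that, because $\mathcal{E} = \mathcal{P}^{\oplus n}$ and the components of $h$ relative to this decomposition are exactly $f_{1}, \ldots, f_{n}$ (as pointed out in the paragraph following the table of AR sequences), we have $\sum_{i} g_{i} \circ f_{i} = h$. Therefore the sum equals $p \circ h$.

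Finally, since (\ref{eqn.arsequence}) is a short exact sequence, $p \circ h = 0$, and the claim follows. There is no real obstacle here: the lemma is essentially a repackaging of the fact that the adjunction unit $\eta$ encodes the relation $ph = 0$ once $\Psi$ is correctly identified with the map dual to $h$. The only care needed is to verify that the tensor products are formed over $\operatorname{End}(\mathcal{P})$ in a way compatible with the right-$\operatorname{End}(\mathcal{P})$-linearity of $\Psi$ established in Proposition \ref{prop.phi}, so that $\sum_{i} \phi_{i} \otimes f_{i}$ is a well-defined element and the computation above is legitimate; this is immediate from the bimodule structures already set up.
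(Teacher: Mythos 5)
Your proposal is correct and is essentially identical to the paper's proof: both identify $\Psi(\phi_i)$ with $p\circ g_i$, compose to get $\sum_i p\circ g_i\circ f_i = p\circ h$, and conclude from the exactness of (\ref{eqn.arsequence}). The extra remarks on $\operatorname{End}(\mathcal{P})$-linearity are fine but not needed beyond what Proposition \ref{prop.phi} already provides.
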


\begin{proof}
We let $g_{i}:\mathcal{P} \longrightarrow \mathcal{P}^{\oplus n}$ denote the $i$th inclusion.  By definition of $\Psi$, the element $\sum_{i}\phi_{i} \otimes f_{i}$ maps to $p (\sum_{i} g_{i} f_{i})$.  But this is zero as it is the composition of maps in (\ref{eqn.arsequence}).
\end{proof}

For the next result, recall the definition of $Q_{i}$ from Section \ref{section.ncsym}.
\begin{prop} \label{prop.01}
For $i \in \mathbb{Z}$ even, there is a canonical isomorphism of $\operatorname{End}(\overline{\mathcal{L}})-\operatorname{End}(\mathcal{L})$-bimodules
$$
\Psi_{i}:M^{i*} \longrightarrow \operatorname{Hom}(\tau^{\frac{i}{2}} \mathcal{L},\tau^{\frac{i}{2}}\overline{\mathcal{L}}),
$$
and for $i \in \mathbb{Z}$ odd there is a canonical isomorphism of $\operatorname{End}(\mathcal{L})-\operatorname{End}(\overline{\mathcal{L}})$-bimodules
$$
\Psi_{i}:M^{i*} \longrightarrow \operatorname{Hom}(\tau^{\frac{i+1}{2}}\overline{\mathcal{L}},\tau^{\frac{i-1}{2}}\mathcal{L}),
$$
where the bimodule structure on the codomain is induced by the appropriate power of $\tau$.  Furthermore, for $i \in \mathbb{Z}$, $Q_{i} \subset \operatorname{ker }(\Psi_{i} \otimes \Psi_{i+1})$.
\end{prop}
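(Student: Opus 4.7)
The plan is to construct $\Psi_i$ iteratively by applying Proposition \ref{prop.phi} at each step, and to deduce the kernel condition from Lemma \ref{lemma.relations}. The base case is $\Psi_0 = \operatorname{id}$ under the tautological identification $M^{0*} = M = \operatorname{Hom}(\mathcal{L}, \overline{\mathcal{L}})$, and $\Psi_{-1}$ is supplied directly by Proposition \ref{prop.phi} with $(\mathcal{N}, \mathcal{P}) = (\mathcal{L}, \overline{\mathcal{L}})$, giving $M^{-1*} = {}^*M \cong \operatorname{Hom}(\overline{\mathcal{L}}, \tau^{-1}\mathcal{L})$, which already matches the stated formula at $i=-1$.

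For the inductive step in the positive direction, suppose $\Psi_{i-1}$ has been defined. The crucial observation is that $(\mathcal{N}, \mathcal{P})$ can be chosen so that $\operatorname{Hom}(\mathcal{P}, \tau^{-1}\mathcal{N})$ is precisely the codomain of $\Psi_{i-1}$: for $i$ even take $\mathcal{N} = \tau^{i/2}\mathcal{L}$ and $\mathcal{P} = \tau^{i/2}\overline{\mathcal{L}}$; for $i$ odd take $\mathcal{N} = \tau^{(i+1)/2}\overline{\mathcal{L}}$ and $\mathcal{P} = \tau^{(i-1)/2}\mathcal{L}$. In each case, the table following (\ref{eqn.arsequence}) certifies that $\mathcal{P}$ is an indecomposable summand of the middle term of the AR sequence starting at $\mathcal{N}$, so Proposition \ref{prop.phi} produces a bimodule isomorphism ${}^*\operatorname{Hom}(\mathcal{N}, \mathcal{P}) \cong \operatorname{Hom}(\mathcal{P}, \tau^{-1}\mathcal{N}) \cong M^{(i-1)*}$. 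Taking right duals and invoking biduality via Lemma \ref{lemma.eta} yields $\operatorname{Hom}(\mathcal{N}, \mathcal{P}) \cong (M^{(i-1)*})^* = M^{i*}$, and $\Psi_i$ is defined as the inverse of this composition. The negative direction proceeds analogously, iterating from $\Psi_{-1}$.

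The bimodule compatibility is automatic at each step, since Proposition \ref{prop.phi} produces a bimodule isomorphism in which the $\operatorname{End}(\mathcal{N})$-action on $\operatorname{Hom}(\mathcal{P}, \tau^{-1}\mathcal{N})$ is transported via the map $\Phi$ induced by $\tau^{-1}$, and biduality from Lemma \ref{lemma.eta} preserves bimodule structure. Hence the asserted $\operatorname{End}(\overline{\mathcal{L}})$- and $\operatorname{End}(\mathcal{L})$-actions on $M^{i*}$ match those on the Hom codomains after identifying $\operatorname{End}(\tau^n \mathcal{L})$ and $\operatorname{End}(\tau^n \overline{\mathcal{L}})$ with $\operatorname{End}(\mathcal{L})$ and $\operatorname{End}(\overline{\mathcal{L}})$ via powers of $\tau$.

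For the kernel claim, interpreted (in view of Lemma \ref{lemma.relations}) as saying that the composite of $\Psi_i \otimes \Psi_{i+1}$ with the Hom-composition map through the bundle $\mathcal{P}_{i+1}$ annihilates $Q_i$, the proof is immediate from Lemma \ref{lemma.relations} applied with $(\mathcal{N}, \mathcal{P}) = (\mathcal{N}_{i+1}, \mathcal{P}_{i+1})$: the generator $\sum_j \phi_j \otimes f_j = \eta_{i,M}(1)$ of $Q_i$ (under the identifications supplied by $\Psi_i$ and $\Psi_{i+1}$) goes to zero, so $F_i$-linearity gives $Q_i \subset \ker$. The main obstacle I anticipate is the bookkeeping required to keep the parity-dependent choices of $(\mathcal{N}, \mathcal{P})$, the $\tau^{\pm 1}$ shifts, and the twisted bimodule actions consistent across the inductive steps, but each individual step reduces to a direct application of Proposition \ref{prop.phi} and Lemma \ref{lemma.eta}.
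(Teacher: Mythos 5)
Your proposal is correct and follows essentially the same route as the paper: a two-sided induction from $\Psi_0=\operatorname{id}$, using Proposition \ref{prop.phi} to identify $\operatorname{Hom}(\mathcal{P},\tau^{-1}\mathcal{N})$ with a left dual and Lemma \ref{lemma.eta} to dualize the previously constructed isomorphism (the paper packages this as an auxiliary map $\Psi_i'$ plus the uniqueness clause of Lemma \ref{lemma.eta}, which also forces the canonical element of $Q_i$ onto the element annihilated in Lemma \ref{lemma.relations}, exactly as you use it). The only point handled more explicitly in the paper is the passage between $Q_{i,M}'$ and $Q_{i,M}$ via Lemma \ref{lemma.eta}(1), which your "under the identifications supplied by $\Psi_i$ and $\Psi_{i+1}$" correctly but tersely subsumes.
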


\begin{proof}
First we construct $\Psi_{i}$ for $i \geq 0$.  We proceed by induction on $i$.  If $i=0$, the isomorphism is equality.

Now, suppose $i+1$ is odd.  Then there is a composition, $\Psi_{i}'$ of isomorphisms
\begin{eqnarray*}
{}^{*}((M^{i*})^{*}) & \overset{\cong}{\longrightarrow} & M^{i*} \\
& \overset{\Psi_{i}}{\longrightarrow} & \operatorname{Hom}(\tau^{\frac{i}{2}}\mathcal{L},\tau^{\frac{i}{2}}\overline{\mathcal{L}}) \\
& \overset{\cong}{\longrightarrow} & \operatorname{Hom}(\tau^{\frac{i}{2}}\mathcal{L},\tau^{-1}\tau^{\frac{i+2}{2}}\overline{\mathcal{L}}) \\
& \overset{\cong}{\longrightarrow} & {}^{*}\operatorname{Hom}(\tau^{\frac{i+2}{2}}\overline{\mathcal{L}},\tau^{\frac{i}{2}}\mathcal{L})
\end{eqnarray*}
as follows: the first isomorphism is from Lemma \ref{lemma.eta}(1) with $\Gamma=\operatorname{id}_{M^{i+1*}}$, the third isomorphism is induced by a natural transformation
\begin{equation} \label{eqn.tau}
\tau^{\frac{i}{2}} \longrightarrow \tau^{-1}\tau^{\frac{i+2}{2}},
\end{equation}
and the fourth isomorphism is an application of Proposition \ref{prop.phi} with $\mathcal{N}=\tau^{\frac{i+2}{2}}\overline{\mathcal{L}}$ and with $\mathcal{P}=\tau^{\frac{i}{2}}\mathcal{L}$.

We claim that these isomorphisms are all compatible with the $\operatorname{End}(\overline{\mathcal{L}})-\operatorname{End}(\mathcal{L})$-bimodule structures.  For, by the naturality of (\ref{eqn.tau}), the third isomorphism is compatible with the $\operatorname{End}(\overline{\mathcal{L}})-\operatorname{End}(\mathcal{L})$-bimodule structures.  By Proposition \ref{prop.phi}, the fourth isomorphism is as $\operatorname{End}(\tau^{\frac{i+2}{2}}\overline{\mathcal{L}})-\operatorname{End}(\tau^{\frac{i}{2}}\mathcal{L})$-bimodules.  Hence, under the isomorphisms induced by the appropriate powers of $\tau$, this is an isomorphism of $\operatorname{End}(\overline{\mathcal{L}})-\operatorname{End}(\mathcal{L})$-bimodules.

It follows from Lemma \ref{lemma.eta}(2) that if $\{f_{1}, \ldots, f_{n}\}$ is a left basis for the bimodule  $\operatorname{Hom}(\tau^{\frac{i+2}{2}}\overline{\mathcal{L}},\tau^{\frac{i}{2}}\mathcal{L})$ and $\{\phi_{1},\ldots, \phi_{n}\}$ is the corresponding right dual basis for ${}^{*}\operatorname{Hom}(\tau^{\frac{i+2}{2}}\overline{\mathcal{L}},\tau^{\frac{i}{2}}\mathcal{L})$, then there exists a unique isomorphism of bimodules
$$
\Psi_{i+1}: M^{i+1*} \longrightarrow \operatorname{Hom}(\tau^{\frac{i+2}{2}}\overline{\mathcal{L}},\tau^{\frac{i}{2}}\mathcal{L})
$$
such that $(\Psi_{i}') \otimes \Psi_{i+1}$ maps $Q_{i,M}'$ to the submodule generated by $\sum_{i}\phi_{i} \otimes f_{i}$.

A similar construction yields $\Psi_{i+1}$ for $i+1>0$ even, and we omit the details.

Next, we construct $\Psi_{i}$ in case $i \leq 0$ inductively.  Suppose $i-1$ is odd, and suppose we have an isomorphism $\Psi_{i}:M^{i*} \longrightarrow \operatorname{Hom}(\tau^{\frac{i}{2}}\mathcal{L}, \tau^{\frac{i}{2}}\overline{\mathcal{L}})$.  Then we have a composition of isomorphisms
\begin{eqnarray*}
M^{i-1*} & \overset{=}{\longrightarrow} & {}^{*}(M^{i*}) \\
& \overset{\Psi_{i-1}'}{\longrightarrow} & {}^{*}\operatorname{Hom}(\tau^{\frac{i}{2}}\mathcal{L}, \tau^{\frac{i}{2}}\overline{\mathcal{L}}) \\
& \overset{\cong}{\longrightarrow} & \operatorname{Hom}(\tau^{\frac{i}{2}}\overline{\mathcal{L}},\tau^{-1}\tau^{\frac{i}{2}}\mathcal{L}) \\
& \overset{\cong}{\longrightarrow} & \operatorname{Hom}(\tau^{\frac{i}{2}}\mathcal{L},\tau^{\frac{i-2}{2}}\overline{\mathcal{L}})
\end{eqnarray*}
whose second arrow $\Psi_{i-1}'$ is the isomorphism $\Delta$ from Lemma \ref{lemma.eta}(2) with $\Gamma = \Psi_{i}$ and whose other arrows are defined as in the $i>0$ case.  As above, this isomorphism is compatible with the $\operatorname{End}({\mathcal{L}})-\operatorname{End}(\overline{\mathcal{L}})$-bimodule structures.

The construction of $\Psi_{i-1}$ in case $i \leq 0$, $i-1$ even is similar and we leave the details to the reader.

Now we prove the second part of the proposition.  We do this in case $i$ is even and $i \geq 0$.  The proofs in other cases are similar and omitted.  By the definition of $\Psi'_{i}$ and $\Psi_{i+1}$ above, and by Lemma \ref{lemma.relations}, we know that $Q_{i,M}'$ maps to zero under the composition
\begin{eqnarray*}
{}^{*}(M^{i+1*})\otimes M^{i+1*} & \overset{\Psi'_{i}\otimes \Psi_{i+1}}{\longrightarrow} & {}^{*}\operatorname{Hom}(\tau^{\frac{i+2}{2}}\overline{\mathcal{L}},\tau^{\frac{i}{2}}\mathcal{L}) \otimes \operatorname{Hom}(\tau^{\frac{i+2}{2}}\overline{\mathcal{L}},\tau^{\frac{i}{2}}\mathcal{L}) \\
& \overset{\cong}{\longrightarrow} &  \operatorname{Hom}(\tau^{\frac{i}{2}}\mathcal{L},\tau^{-1}\tau^{\frac{i+2}{2}}\overline{\mathcal{L}}) \otimes \operatorname{Hom}(\tau^{\frac{i+2}{2}}\overline{\mathcal{L}},\tau^{\frac{i}{2}}\mathcal{L}) \\
& \longrightarrow & \operatorname{Hom}(\tau^{\frac{i+2}{2}}\overline{\mathcal{L}},\tau^{-1}\tau^{\frac{i+2}{2}}\overline{\mathcal{L}})
\end{eqnarray*}
whose second arrow is from Proposition \ref{prop.phi} and whose last arrow is composition.  Therefore, by definition of $\Psi_{i}$, $Q_{i,M}'$ maps to zero under the composition
\begin{eqnarray*}
{}^{*}(M^{i+1*})\otimes M^{i+1*} & \overset{\cong}{\longrightarrow} & M^{i*}\otimes M^{i+1*} \\
& \overset{\Psi_{i}\otimes \Psi_{i+1}}{\longrightarrow} & \operatorname{Hom}(\tau^{\frac{i}{2}}\mathcal{L},\tau^{\frac{i}{2}}\overline{\mathcal{L}}) \otimes \operatorname{Hom}(\tau^{\frac{i+2}{2}}\overline{\mathcal{L}},\tau^{\frac{i}{2}}\mathcal{L}) \\
& \longrightarrow & \operatorname{Hom}(\tau^{\frac{i+2}{2}}\overline{\mathcal{L}},\tau^{\frac{i}{2}}\overline{\mathcal{L}})
\end{eqnarray*}
whose first arrow is from Lemma \ref{lemma.eta}(1) with $\Gamma=\operatorname{id}_{M^{i+1*}}$ and whose last arrow is composition.  But this implies, by Lemma \ref{lemma.eta}(1), that $Q_{i,M}$ goes to zero under the composition of the last three maps above, completing the proof in this case.
\end{proof}

\begin{lemma} \label{lemma.dimform}
Let $\mathcal{F}$ and $\mathcal{G}$ be indecomposable bundles in ${\sf H}$, and let $\operatorname{rdim}(\operatorname{Hom}(\mathcal{F},\mathcal{G}))$ denote the dimension of $\operatorname{Hom}(\mathcal{F},\mathcal{G})$ as a $\operatorname{End}(\mathcal{F})$-vector space.  If $i \geq 0$ is an integer then we have
\begin{center}
\begin{tabular}{|c|c|c|c|}
\hline
$\mathcal{F}$ & $\mathcal{G}$ & $\operatorname{rdim}(\operatorname{Hom}(\mathcal{F},\mathcal{G}))$ in $(1,4)$ case & $\operatorname{rdim}(\operatorname{Hom}(\mathcal{F},\mathcal{G}))$ in $(2,2)$ case \\ \hline \hline
$\mathcal{L}$ & $\tau^{-i}\mathcal{L}$ &  $2i+1$ & $2i+1$ \\ \hline
$\overline{\mathcal{L}}$ & $\tau^{-i}\mathcal{L}$ & $i$ & $2i$ \\ \hline
$\overline{\mathcal{L}}$ & $\tau^{-i}\overline{\mathcal{L}}$ & $2i+1$ & $2i+1$ \\ \hline
$\mathcal{L}$ & $\tau^{-i}\overline{\mathcal{L}}$ & $4i+4$ & $2i+2$ \\ \hline
\end{tabular}
\end{center}
If $i<0$ all of the above $\operatorname{Hom}$ spaces are zero.
\end{lemma}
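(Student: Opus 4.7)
The plan is a straightforward induction on $i$, driven by the AR sequences
\[
0 \to \tau^{-i+1}\mathcal{L} \to \mathcal{E}_{i}^{\mathcal{L}} \to \tau^{-i}\mathcal{L} \to 0, \qquad 0 \to \tau^{-i+1}\overline{\mathcal{L}} \to \mathcal{E}_{i}^{\overline{\mathcal{L}}} \to \tau^{-i}\overline{\mathcal{L}} \to 0,
\]
whose middle terms are read off from the table of AR sequences in the preceding discussion (so e.g.\ $\mathcal{E}_{i}^{\mathcal{L}}=\tau^{-i+1}\overline{\mathcal{L}}^{\oplus 2}$ in the $(2,2)$ case and $\tau^{-i+1}\overline{\mathcal{L}}$ in the $(1,4)$ case, and similarly for $\mathcal{E}_{i}^{\overline{\mathcal{L}}}$).

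The case $i<0$ is immediate: every entry of the table takes the form $\operatorname{Hom}(\mathcal{F},\tau^{j}\mathcal{G})$ with $j=-i>0$ and $\mathcal{F}\in\{\mathcal{L},\overline{\mathcal{L}}\}$, so (\ref{eqn.vanish}) and (\ref{eqn.vanish2}) give vanishing directly. For the base case $i=0$, I would note that $\operatorname{End}(\mathcal{L})$ and $\operatorname{End}(\overline{\mathcal{L}})$ have right dimension $1$ over themselves, that $\operatorname{Hom}(\overline{\mathcal{L}},\mathcal{L})=0$ by (\ref{eqn.vanish2}), and that $\operatorname{Hom}(\mathcal{L},\overline{\mathcal{L}})=M$ has right dimension $n=4$ or $2$ according to the case---matching the $i=0$ values in each row.

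For the inductive step $i\ge 1$, I would apply $\operatorname{Hom}(\mathcal{F},-)$ to the two AR sequences above with $\mathcal{F}\in\{\mathcal{L},\overline{\mathcal{L}}\}$. By the Serre duality built into the definition of a noncommutative curve of genus zero,
\[
\operatorname{Ext}^{1}(\mathcal{F},\tau^{-i+1}\mathcal{G}) \cong D\operatorname{Hom}(\tau^{-i+1}\mathcal{G},\tau\mathcal{F}) \cong D\operatorname{Hom}(\mathcal{G},\tau^{i}\mathcal{F}),
\]
and for $i\ge 1$ the right-hand side vanishes by (\ref{eqn.vanish}) (when $\mathcal{F}=\mathcal{L}$) or (\ref{eqn.vanish2}) (when $\mathcal{F}=\overline{\mathcal{L}}$). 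Hence each long exact sequence collapses to a short exact sequence of right $\operatorname{End}(\mathcal{F})$-modules, and additivity of dimension over the (skew) field $\operatorname{End}(\mathcal{F})$ produces linear recurrences expressing the $i$-th row in terms of the $(i-1)$-th row. Writing $a_i,b_i,c_i,d_i$ for the four entries in the $(2,2)$ column, these read $a_i=2b_{i-1}-a_{i-1}$, $b_i=2a_i-b_{i-1}$, $c_i=2d_{i-1}-c_{i-1}$, $d_i=2c_i-d_{i-1}$, which together with the base case force $a_i=d_i=2i+1$, $c_i=2i$, $b_i=2i+2$ by induction; the $(1,4)$ column is identical except that the multiplicities $2,2$ are replaced by $1,4$.

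The whole argument is essentially a dimension-count bootstrapped off AR sequences via Serre duality, so there is no genuine conceptual obstacle---the only care required is the bookkeeping of the eight coupled recurrences (four rows $\times$ two left-right dimension cases), all of which are verified by the same mechanism.
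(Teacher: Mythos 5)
Your proof is correct and follows essentially the same route as the paper: induction on $i$ using the two AR sequences, with the $\operatorname{Ext}^{1}$ terms killed by Serre duality together with the vanishing statements (\ref{eqn.vanish}) and (\ref{eqn.vanish2}), and additivity of dimension over the division ring $\operatorname{End}(\mathcal{F})$. The only cosmetic differences are that the paper imports the first row directly from Kussin instead of deriving it from the coupled recurrence with the fourth row, and your parenthetical attributions of (\ref{eqn.vanish}) versus (\ref{eqn.vanish2}) should be indexed by the source $\mathcal{G}$ of $\operatorname{Hom}(\mathcal{G},\tau^{i}\mathcal{F})$ rather than by $\mathcal{F}$ (both vanish for $i\ge 1$ in any case).
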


\begin{proof}
We prove the formulas in case $M$ is a $(1,4)$-bimodule.  The proof in the case that $M$ is a $(2,2)$-bimodule is similar and omitted.  The first formula is \cite[Section 1.1.15]{kussin}.  We prove the second and third formulas by induction on $i$.  The $i=0$ case follows from (\ref{eqn.vanish2}).  Now suppose $i \geq 0$ and consider the AR sequences
\begin{equation} \label{eqn.arnew1}
0 \longrightarrow \tau^{-i}\mathcal{L} \longrightarrow \tau^{-i}\overline{\mathcal{L}} \longrightarrow \tau^{-(i+1)}\mathcal{L} \longrightarrow 0
\end{equation}
and
\begin{equation} \label{eqn.arnew2}
0 \longrightarrow \tau^{-i}\overline{\mathcal{L}} \longrightarrow \tau^{-(i+1)}\mathcal{L}^{\oplus 4} \longrightarrow \tau^{-(i+1)}\overline{\mathcal{L}} \longrightarrow 0.
\end{equation}
Applying $\operatorname{Hom}(\overline{\mathcal{L}},-)$ to (\ref{eqn.arnew1}) yields a short exact sequence
$$
0 \longrightarrow \operatorname{Hom}(\overline{\mathcal{L}},\tau^{-i}\mathcal{L}) \longrightarrow \operatorname{Hom}(\overline{\mathcal{L}},\tau^{-i}\overline{\mathcal{L}}) \longrightarrow \operatorname{Hom}(\overline{\mathcal{L}},\tau^{-(i+1)}\mathcal{L}) \longrightarrow 0
$$
by (\ref{eqn.vanish}), and the second formula in the $i+1$ case follows from the induction hypothesis applied to this sequence.

The third formula in the $i+1$ case now follows by applying $\operatorname{Hom}(\overline{\mathcal{L}},-)$ to (\ref{eqn.arnew2}) and using (\ref{eqn.vanish}), then employing the $i+1$ case of the second formula and the induction hypothesis.  Now the second and third formulas follow by induction.

Finally, the fourth formula follows in a similar fashion by applying $\operatorname{Hom}(\mathcal{L},-)$ to (\ref{eqn.arnew1})  and using the first formula.

The last statement follows from (\ref{eqn.vanish}) and (\ref{eqn.vanish2}).
\end{proof}
We now define a sequence of objects in ${\sf H}$ which will be used to construct a $\mathbb{Z}$-algebra coordinate ring for ${\sf H}$.  For $n \in \mathbb{Z}$, we define
\begin{equation} \label{eqn.oi}
\mathcal{O}(n) := \begin{cases} \tau^{\frac{-n}{2}}\overline{\mathcal{L}} & \mbox{if $n$ is even} \\ \tau^{\frac{-(n+1)}{2}} \mathcal{L} & \mbox{if $n$ is odd.} \end{cases}
\end{equation}
We define a $\mathbb{Z}$-algebra $H$ by setting
$$
H_{ij} = \begin{cases} \operatorname{Hom}(\mathcal{O}(-j),\mathcal{O}(-i)) & \mbox{if $j \geq i$} \\ 0 & \mbox{if $i>j$}\end{cases}
$$
and defining multiplication as composition.

The next result follows immediately from Lemma \ref{lemma.dimform}.
\begin{cor} \label{cor.hcount}
Suppose $j \geq i$.  If $M$ is a $(1,4)$-bimodule, then the right dimension of $H_{ij}$ is
$$
\begin{cases} j-i+1 & \mbox{if $i$ and $j$ have the same parity,} \\ \frac{j-i+1}{2} & \mbox{if $i$ is odd and $j$ is even, and} \\ 2j-2i+2 & \mbox{if $i$ is even and $j$ is odd.} \end{cases}
$$

If $M$ is a $(2,2)$-bimodule, then the right dimension of $H_{ij}$ is $j-i+1$.
\end{cor}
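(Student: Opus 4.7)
The corollary is essentially a bookkeeping consequence of Lemma \ref{lemma.dimform}, once we unpack the definition (\ref{eqn.oi}) and use the fact that $\tau$ is an autoequivalence, hence preserves $\operatorname{Hom}$-dimensions on both sides. So the plan is a four-way case split on the parities of $i$ and $j$ (with $j \geq i$).

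In each case, I would substitute (\ref{eqn.oi}) into $H_{ij}=\operatorname{Hom}(\mathcal{O}(-j),\mathcal{O}(-i))$ and apply an appropriate power of $\tau^{-1}$ to reduce to a $\operatorname{Hom}$ space of the form handled by Lemma \ref{lemma.dimform}. For instance, if $i$ and $j$ are both even, $H_{ij}\cong\operatorname{Hom}(\overline{\mathcal{L}},\tau^{-(j-i)/2}\overline{\mathcal{L}})$, which by row three of the table has right dimension $2\cdot\tfrac{j-i}{2}+1=j-i+1$ in both the $(1,4)$ and $(2,2)$ cases. If $i$ and $j$ are both odd, a similar shift gives $\operatorname{Hom}(\mathcal{L},\tau^{-(j-i)/2}\mathcal{L})$, which by row one again has right dimension $j-i+1$ in both cases.

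The two mixed-parity cases, where $j\geq i+1$, are the only places the $(1,4)$ and $(2,2)$ formulas diverge. When $i$ is odd and $j$ is even, a shift identifies $H_{ij}$ with $\operatorname{Hom}(\overline{\mathcal{L}},\tau^{-(j-i+1)/2}\mathcal{L})$, and row two of Lemma \ref{lemma.dimform} gives right dimension $\tfrac{j-i+1}{2}$ in the $(1,4)$ case and $j-i+1$ in the $(2,2)$ case. When $i$ is even and $j$ is odd, a shift identifies $H_{ij}$ with $\operatorname{Hom}(\mathcal{L},\tau^{-(j-i-1)/2}\overline{\mathcal{L}})$, and row four gives right dimension $4\cdot\tfrac{j-i-1}{2}+4=2j-2i+2$ in the $(1,4)$ case and $2\cdot\tfrac{j-i-1}{2}+2=j-i+1$ in the $(2,2)$ case.

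There is no real obstacle: the only point to be careful about is that shifting by $\tau^{-r}$ on both slots of $\operatorname{Hom}$ not only preserves the vector space but also preserves its right module structure over the endomorphism ring of the first argument (since $\tau^{-r}$ induces a $k$-algebra isomorphism between the relevant endomorphism rings), so ``right dimension'' is indeed invariant under the shift. Assembling the four subcases yields the three-way formula in the $(1,4)$ case and the uniform answer $j-i+1$ in the $(2,2)$ case.
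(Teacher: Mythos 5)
Your proposal is correct and is exactly the argument the paper intends: the paper simply states that the corollary ``follows immediately from Lemma \ref{lemma.dimform},'' and your four-way parity split with the $\tau$-shift (including the remark that the shift preserves right dimensions) is the straightforward unpacking of that claim, with all four numerical reductions checking out.
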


In order to prove Proposition \ref{prop.newdimcount}, we will need the following technical lemma.
\begin{lemma} \label{lemma.inq}
Let $M$ be a $(1,4)$-bimodule, and write $\mathbb{S}^{n.c.}(M)_{ij}=T_{ij}/R_{ij}$, where $T$ and $R$ are defined in Section \ref{section.ncsym}.  Suppose $i,j\in \mathbb{Z}$ are such that $j$ is odd and $j > i$.  Finally, suppose $v \in T_{ij}$ has the property that there exist two right-independent vectors $g_{1},g_{2} \in  T_{j j+1}$ such that $v \otimes g_{l} \in R_{ij+1}$ for $l=1,2$.  Then $v \in R_{ij}$.
\end{lemma}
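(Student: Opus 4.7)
The plan is to use the recursive identity
\[
R_{i,j+1} \;=\; R_{ij} \otimes T_{j,j+1} \;+\; T_{i,j-1} \otimes Q_{j-1},
\]
which follows directly from the definition of $R$ in Section \ref{section.ncsym}. Passing to the quotient $A_{ij} \otimes_{F_j} M^{j*} \cong T_{i,j+1}/(R_{ij} \otimes T_{j,j+1})$, the hypothesis $v \otimes g_l \in R_{i,j+1}$ rephrases as: the image $\bar v \otimes g_l$ lies in the image of $T_{i,j-1} \otimes Q_{j-1}$, for $l = 1,2$. The goal is then to show $\bar v = 0$ in $A_{ij}$, which is equivalent to $v \in R_{ij}$.

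The next step is to unpack the image of $T_{i,j-1} \otimes Q_{j-1}$ via the explicit form of $Q_{j-1}$. Since $j-1$ is even, $F_{j-1} = K$, and $Q_{j-1}$ is the $K$-span of $\eta_{j-1}(1) = \sum_t \phi_t \otimes f_t$ for a right basis $\{\phi_t\}_{t=1}^n$ of $M^{(j-1)*}$ with dual left basis $\{f_t\}_{t=1}^n$ of $M^{j*}$. Consequently, every element of this image in $A_{ij} \otimes M^{j*}$ has the form $\sum_t \overline{w \otimes \phi_t} \otimes f_t$ for some $w \in T_{i,j-1}$. I would then expand $g_l = \sum_t \lambda_{l,t} f_t$ in the left $L$-basis $\{f_t\}$ of $M^{j*}$ and use the identification $A_{ij} \otimes_L M^{j*} \cong A_{ij}^n$ given by this basis, which converts the hypothesis into a system
\[
\bar v \cdot \lambda_{l,t} \;=\; \overline{u_l \otimes \phi_t} \quad \text{in } A_{ij} \qquad (l=1,2,\; t=1,\dots,n)
\]
for suitable $u_1, u_2 \in T_{i,j-1}$ (one per $l$).

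The final step is to extract $\bar v = 0$ from this system using that the $2 \times n$ coefficient matrix $(\lambda_{l,t})$ has rank $2$ by right-independence of $g_1,g_2$. After restricting to two columns $t_1, t_2$ on which $(\lambda_{l,t})$ has an invertible $2 \times 2$ minor, one should be able to solve for $\bar v$ in two different ways from the corresponding four equations, forcing a compatibility condition of the shape $\overline{u \otimes \phi_t} = 0$ for some $u \in T_{i,j-1}$ built linearly from $u_1, u_2$. The hard part will be this concluding algebraic manipulation: because each $u_l$ simultaneously controls $n$ of the equations, the rank-$2$ input has to be leveraged against the rigidity of the $(1,4)$-structure for $j$ odd—most notably the asymmetric left-right dimensions of $M^{j*}$ and the one-dimensionality of $Q_{j-1}$ over $K$. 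An induction on $j-i$ will likely be needed to close out, with the base case $j = i+1$ handled by the torsion-freeness of $M^{i*}$ over $F_i$, so that the forced relation $\overline{u \otimes \phi_t} = 0$ propagates back to $v \in R_{ij}$.
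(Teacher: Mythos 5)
Your setup is sound and coincides with the paper's opening moves: the decomposition $R_{i,j+1}=R_{ij}\otimes T_{j,j+1}+T_{i,j-1}\otimes Q_{j-1}$, the passage to $A_{ij}\otimes M^{j*}$, and the resulting system $\bar v\,\lambda_{l,t}=\overline{u_l\otimes\phi_t}$ are exactly the analogue of the paper's relation (\ref{eqn.cross}). But the write-up stops precisely where the content of the lemma begins, and the two steps you defer are not routine. First, the base case is \emph{not} torsion-freeness: for a single nonzero $g$ one can perfectly well have $v\otimes g\in Q$ with $v\neq 0$ (the generator of $Q$ itself is of this form), so the base case must genuinely play the two right-independent vectors against the fact that $Q$ is the span of a single generator $\phi\otimes\phi^{*}$ with $\phi$ a right basis. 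The paper does this by showing that $v\neq 0$ forces $g_1$ and $g_2$ to be right multiples of one common vector, contradicting right-independence; nothing in "torsion-freeness of $M^{i*}$ over $F_i$" produces that contradiction.

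Second, the "concluding algebraic manipulation" you flag as the hard part is the entire inductive step, and the mechanism is not the one you gesture at. Solving for $\bar v$ in two ways yields (in the paper's notation) $h_1\otimes f_1-h_2\otimes f_2\in R_{ij}$ with $f_1,f_2$ left-independent, which is (\ref{eqn.cross2}); the paper then expands $R_{ij}$ one further level, isolates an auxiliary coefficient $h\in T_{i,j-2}$ of the top relation $Q_{j-2}$, and applies the \emph{inductive hypothesis to $h$} paired with the two complementary dual-basis vectors ${}^{*}f_3,{}^{*}f_4$ --- not to $v$, nor to $u_1,u_2$. Without this re-application of the hypothesis to a new element with a new pair of right-independent test vectors, the rank condition on $(\lambda_{l,t})$ does not force $\overline{u\otimes\phi_t}=0$: each $u_l$ is a single unknown controlling all the equations in its row, and a nonzero consistent solution is not excluded a priori. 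A smaller but real issue: expanding $g_l$ in a \emph{left} basis $\{f_t\}$ and asserting that $(\lambda_{l,t})$ has rank $2$ uses left-independence of $g_1,g_2$, not the right-independence hypothesized in the lemma; in the $(1,4)$ situation the left and right dimensions of $M^{j*}$ differ, so this distinction matters and needs to be reconciled with the parity of $j$ before the argument can be completed.
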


\begin{proof}
We proceed by induction on $j-i$.  Suppose $j-i=1$, so that $v \in M^{j*}$.  Let $\phi$ be a nonzero element of $M^{j*}$ so that $\phi$ is a right basis for $M^{j*}$, and let $\phi^{*} \in M^{j+1*}$ denote the corresponding right dual.  Then, for $l=1,2$, there exist $a, b_{l} \in \operatorname{End}(\overline{\mathcal{L}})$ and $c_{l} \in \operatorname{End}(\mathcal{L})$ such that $v=\phi a$, 
$$
g_{l}=b_{l} \phi^{*},
$$ 
and
$$
\phi a \otimes b_{l} \phi^{*}=\phi \otimes \phi^{*} c_{l}.
$$
Therefore, for $l=1,2$, $ab_{l}\phi^{*}=\phi^{*}c_{l}$.  Thus, if $a \neq 0$, then $c_{l} \neq 0$, and so for $l=1,2$, $g_{l}c_{l}^{-1}=a^{-1}\phi^{*}$.  This contradicts the right independence of $g_{1},g_{2}$.  It follows that $a=0$ and hence $v=0$, so that $v \in R_{ii+1}$.

Now suppose $j-i>1$ and retain the notation in the previous paragraph.  Since, for $l=1,2$,
$$
v \otimes g_{l} \in R_{i j+1}=T_{ij-1} \otimes Q_{j}+R_{ij} \otimes M^{j+1*},
$$
there exist $h_{l} \in T_{ij-1}$ such that
$$
v \otimes g_{l} - h_{l} \otimes \phi \otimes \phi^{*} \in R_{ij} \otimes M^{j+1*}.
$$
Since $\phi^{*}$ is a left basis for $M^{j+1*}$, it follows that, for $l=1,2$,
\begin{equation} \label{eqn.cross}
vb_{l} - h_{l} \otimes \phi \in R_{ij},
\end{equation}
and hence
\begin{equation} \label{eqn.cross2}
h_{1} \otimes \phi b_{1}^{-1} - h_{2} \otimes \phi b_{2}^{-1} \in R_{ij}.
\end{equation}
It is straightforward to check that the set $\{f_{1}:=\phi b_{1}^{-1}, f_{2}:=\phi b_{2}^{-1}\}$ is left independent so that it can be extended to a left basis $\{f_{n}\}_{n=1}^{4}$ of $M^{j*}$.  If $j=i+2$, then (\ref{eqn.cross2}) implies that $h_{1}=0$ so that (\ref{eqn.cross}) implies that $v \in R_{ij}$.  If $j>i+2$, then (\ref{eqn.cross2}) implies there is an $h \in T_{i j-2}$ such that
$$
h_{1} \otimes f_{1} - h_{2} \otimes f_{2}-h \otimes \sum_{l}{}^{*}f_{l} \otimes f_{l} \in R_{i j-1} \otimes M^{j*}
$$
where $\{{}^{*}f_{n}\}$ is the dual basis to $\{f_{n}\}$.  Therefore, $h \otimes {}^{*}f_{3}, h \otimes {}^{*}f_{4}$ and $h_{1}-h \otimes {}^{*}f_{1}$ are in $R_{i j-1}$.  By induction, $h \in R_{i j-2}$, so that $h_{1} \in R_{i j-1}$.  Therefore, by (\ref{eqn.cross}), $v \in R_{ij}$.
\end{proof}

\begin{prop} \label{prop.newdimcount}
The right dimension of $H_{ij}$ equals that of $\mathbb{S}^{n.c.}(M)_{ij}$ for all integers $i,j$.
\end{prop}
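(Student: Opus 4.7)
The plan is to induct on $j-i$. The base cases $j-i \in \{0,1\}$ follow directly from the definitions $A_{ii}=F_i$ and $A_{ii+1}=M^{i*}$: in each of the $(2,2)$ and $(1,4)$ cases, the relevant right dimension is immediate and matches Corollary \ref{cor.hcount}.

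For the inductive step, the identity
\begin{equation*}
R_{ij+1} = R_{ij} \otimes A_{jj+1} + T_{ij-1} \otimes Q_{j-1},
\end{equation*}
which is immediate from the definition of $R_{ij+1}$ in Section \ref{section.ncsym}, yields upon passing to the quotient a right-exact sequence
\begin{equation*}
A_{ij-1} \otimes Q_{j-1} \xrightarrow{\alpha} A_{ij} \otimes A_{jj+1} \xrightarrow{\mu} A_{ij+1} \longrightarrow 0.
\end{equation*}
Since $\eta_{j-1}$ is an injection of bimodules $A_{j-1j-1} \hookrightarrow A_{j-1j} \otimes A_{jj+1}$ with image $Q_{j-1}$, we have $Q_{j-1} \cong A_{j-1j-1}$ as a bimodule; hence the right $A_{j+1j+1}$-dimension of $A_{ij-1} \otimes Q_{j-1}$ coincides with that of $A_{ij-1}$ over $A_{j-1j-1}$ (and these two coefficient fields agree).

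Granting the injectivity of $\alpha$, the right dimensions $d_{ij}$ of $A_{ij}$ then satisfy the recurrence
\begin{equation*}
d_{ij+1} = d_{ij} \cdot e_j - d_{ij-1},
\end{equation*}
where $e_j$ is the right $A_{j+1j+1}$-dimension of $A_{jj+1}=M^{j*}$. In the $(2,2)$ case $e_j=2$ for every $j$, so the recurrence solves to $d_{ij}=j-i+1$. In the $(1,4)$ case $e_j$ alternates between $4$ (for $j$ even) and $1$ (for $j$ odd); a short parity-by-parity verification from the base cases reproduces each of the three formulas in Corollary \ref{cor.hcount}.

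The main obstacle is the injectivity of $\alpha$. Unwinding the definitions, this reduces to the following statement: if $v\in T_{ij}$ satisfies $v\otimes g_l \in R_{ij+1}$ for two right-independent vectors $g_1,g_2 \in T_{jj+1}$, then $v \in R_{ij}$. In the $(1,4)$ case with $j$ odd this is exactly Lemma \ref{lemma.inq}, which is sharp precisely because $T_{jj+1}$ has right dimension only $1$ in this situation. The remaining parities, as well as the $(2,2)$ case, can be handled by the same inductive strategy as Lemma \ref{lemma.inq} but with less delicate bookkeeping, since $T_{jj+1}$ then has strictly larger right dimension. Combining these cases with the recurrence completes the proof.
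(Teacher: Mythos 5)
Your overall strategy coincides with the paper's: establish left-exactness of the Euler-type sequence, convert it into the recurrence $d_{i,j+1}=d_{ij}\,e_{j}-d_{i,j-1}$ via the identification $Q_{j-1}\cong F_{j-1}$, and match the solution of the recurrence against Corollary \ref{cor.hcount}. That framework is sound, and your parity-by-parity check of the recurrence is correct. The gap is in the one step that carries all the content: the injectivity of $\alpha$.

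Your claim that injectivity of $\alpha$ ``reduces, after unwinding definitions, exactly to Lemma \ref{lemma.inq}'' is not correct. What injectivity actually amounts to, after splitting $R\otimes M^{j*}$ along the dual left basis $\{f_{l}\}$, is: if $v\otimes \phi_{l}\in R$ for \emph{every} element $\phi_{l}$ of a right basis of the first tensor factor of $Q$, then $v\in R$. When that factor has right dimension four this is indeed an instance of Lemma \ref{lemma.inq} (all four basis vectors are available, and two of them are right-independent). But for the other parity --- half of all degrees in the $(1,4)$ case --- that factor has right dimension one, so the right basis is a \emph{single} vector $\phi$, and the hypothesis of Lemma \ref{lemma.inq} cannot even be formulated: there are no two right-independent vectors in a right one-dimensional space. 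Your own sentence betrays the problem: you assert the reduction ``is exactly Lemma \ref{lemma.inq} \dots because $T_{jj+1}$ has right dimension only $1$,'' which makes the quoted hypothesis vacuous. The statement actually needed there --- that $v\otimes\phi\in R_{ij}$ for the single $\phi$ already forces $v\in R_{i,j-1}$ --- is genuinely stronger than anything Lemma \ref{lemma.inq} asserts, and it is precisely the content of Case 1 of the paper's proof: one decomposes $R_{ij}$ as $R_{i,j-1}\otimes M^{j*}+T_{i,j-2}\otimes Q_{j-1}$, extracts coordinates along a left basis, and applies Lemma \ref{lemma.inq} to an \emph{auxiliary} element $u\in T_{i,j-2}$, not to $v$ itself. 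None of that argument appears in your proposal, and ``the same inductive strategy with less delicate bookkeeping'' does not supply it. Finally, for the $(2,2)$ case the exactness you need is a nontrivial theorem (the paper quotes van den Bergh, Theorem 6.1.2(1)); it is not something to be waved through, and if you intend to prove it directly you must actually do so.
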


\begin{proof}
If $M$ is a $(2,2)$-bimodule, the result follows from Corollary \ref{cor.hcount} and \cite[Theorem 6.1.2(1)]{vandenbergh}.

Now, suppose that $M$ is a $(1,4)$-bimodule.  In this case, we prove that, as in \cite[Theorem 6.1.2 (2)]{vandenbergh}, the exact sequence induced by multiplication
\begin{equation} \label{eqn.euler}
\mathbb{S}^{n.c.}(M)_{ij-1} \otimes Q_{j} \rightarrow \mathbb{S}^{n.c.}(M)_{ij} \otimes M^{j+1*} \rightarrow \mathbb{S}^{n.c.}(M)_{i,j+1} \rightarrow 0
\end{equation}
is exact on the left for all $i \in \mathbb{Z}$.  The result will follow from this and induction in light of Corollary \ref{cor.hcount}, as the reader can check.

We retain the notation from the statement of Lemma \ref{lemma.inq}.  Left exactness of (\ref{eqn.euler}) is equivalent to the equality
\begin{equation} \label{eqn.newintersect}
R_{ij} \otimes M^{j+1*} \cap T_{i j-1} \otimes Q_{j} = R_{i j-1} \otimes Q_{j}
\end{equation}
for all $i,j \in \mathbb{Z}$.  For $j<i+2$, both sides of (\ref{eqn.newintersect}) are zero.  Now suppose $j \geq i+2$.  There are two cases to consider.

\medskip
\medskip
{\it Case 1: $j$ is odd}.  Let $\phi \in M^{j*}$ be nonzero, so that it is a right basis for $M^{j*}$, and let $\phi^{*} \in M^{j+1*}$ denote it's right dual.  Extend $f_{1} := \phi$ to a left basis $\{f_{1},\ldots, f_{4}\}$ of $M^{j*}$, and let $\{{}^{*}f_{1},\ldots,{}^{*}f_{4}\}$ denote the corresponding dual right basis for $M^{j-1*}$.

Suppose $v \in T_{i j-1}$ is such that $v \otimes \phi \otimes \phi^{*}$ is in the left-hand side of (\ref{eqn.newintersect}). Now we consider two sub-cases:  if $j=i+2$, then there exists $a \in \operatorname{End }(\overline{\mathcal{L}})$ such that
$$
v \otimes \phi \otimes \phi^{*} = a (\sum_{l}{}^{*}f_{l} \otimes f_{l}) \otimes \phi^{*}.
$$
Since $\phi^{*}$ is a left basis for $M^{j+1*}$, we conclude that
$$
v \otimes \phi = v \otimes f_{1} = a (\sum_{l}{}^{*}f_{l} \otimes f_{l}).
$$
Therefore, since $\{f_{l}\}_{l=1}^{4}$ is a left basis for $M^{j*}$ we conclude that $a$, and hence $v$, equals zero.

If $j>i+2$, then there exists $w \in R_{ij-1} \otimes M^{j*}$ and $u \in T_{ij-2}$ such that
$$
v \otimes \phi \otimes \phi^{*} = w \otimes \phi^{*}+ u \otimes (\sum_{l}{}^{*}f_{l} \otimes f_{l}) \otimes \phi^{*}.
$$
Therefore,
$$
v \otimes f_{1} - u \otimes (\sum_{l}{}^{*}f_{l} \otimes f_{l}) \in R_{ij-1} \otimes M^{j*},
$$
so that, since $\{f_{l}\}_{l=1}^{4}$ is a left basis for $M^{j*}$, we conclude that
\begin{equation} \label{eqn.newdelta1}
v-u \otimes {}^{*}f_{1} \in R_{ij-1}
\end{equation}
and
\begin{equation} \label{eqn.newdelta2}
u \otimes {}^{*}f_{j} \in R_{i j-1}
\end{equation}
for $j \neq 1$.  It now follows from (\ref{eqn.newdelta2}) and Lemma \ref{lemma.inq} that $u \in R_{i j-2}$ so that (\ref{eqn.newdelta1}) implies that $v \in R_{i j-1}$ as desired, completing the proof in the case that $j$ is odd.

\medskip
\medskip
{\it Case 2: $j$ is even}.  Let $\phi \in M^{j*}$ be nonzero so that it forms a left basis for $M^{j*}$, let ${}^{*}\phi$ be its left dual in $M^{j-1*}$, let $\{f_{1}:= \phi, f_{2},f_{3}, f_{4}\}$ denote a right basis for $M^{j*}$, and let $\{f_{1}^{*}, \ldots, f_{4}^{*}\}$ denote the corresponding dual left basis for $M^{j+1*}$.  Finally, let $a_{l} \in \operatorname{End}(\overline{\mathcal{L}})$ be such that 
$$
a_{l}\phi=f_{l}.
$$

Suppose that $v \in T_{i j-1}$ is such that $v \otimes \sum_{l}f_{l} \otimes f_{l}^{*}$ is an element of the left-hand side of (\ref{eqn.newintersect}).  As in the case when $j$ is odd, there are two sub-cases to consider.  We leave the $j=i+2$ case as a straightforward exercise.  Next, suppose $j>i+2$.   Arguing as in the $j$ odd case, we conclude that, for $1 \leq l \leq 4$, there exist $w_{l} \in T_{i j-2}$ such that
$$
v \otimes a_{l} \phi- w_{l} \otimes {}^{*}\phi \otimes \phi \in R_{ij-1} \otimes M^{j*}.
$$
Since $\phi$ is a left basis for $M^{j*}$, this implies that, for all $l$,
\begin{equation} \label{eqn.triplecross}
va_{l}-w_{l}\otimes {}^{*}\phi \in R_{i j-1}.
\end{equation}
Therefore, for $1 \leq m \leq n \leq 4$,
\begin{equation} \label{eqn.lasty}
w_{m} \otimes {}^{*}\phi a_{m}^{-1} - w_{n} \otimes {}^{*}\phi a_{n}^{-1} \in R_{ij-1}.
\end{equation}
As one can check, $\{g_{l}:={}^{*}\phi a_{l}^{-1}\}_{l=1}^{4}$ is left independent.  It follows easily that if $j=i+3$, then $w_{m}=0$, so that (\ref{eqn.triplecross}) implies $v \in R_{ij-1}$.   Thus, it remains to prove the result in case $j>i+3$.  To this end, if we let $\{{}^{*}g_{1},\ldots, {}^{*}g_{4}\}$ denote the corresponding dual basis of $\{g_{1}, \ldots, g_{4}\}$, then (\ref{eqn.lasty}) implies that for $1 \leq m \leq n \leq 4$, there exists $w \in M^{j-3*}$ such that
$$
w_{m} \otimes g_{m}-w_{n} \otimes g_{n} - w \otimes (\sum_{l}{}^{*}g_{l} \otimes g_{l}) \in R_{i j-2} \otimes V^{j-1*}.
$$
Thus, for $m \neq n$ and $l \neq m,n$, $w \otimes {}^{*}g_{l} \in R_{ij-2}$, so that Lemma \ref{lemma.inq} implies that $w \in R_{ij-3}$, and $w_{m} \otimes g_{m} - w \otimes {}^{*}g_{m} \otimes g_{m} \in R_{i j-2} \otimes V^{j-1*}$ so that $w_{m} \otimes g_{m} \in R_{i j-2} \otimes V^{j-1*}$.  Thus $w_{m} \in R_{i j-2}$ and so (\ref{eqn.triplecross}) implies that $v \in R_{ij-1}$ as desired.
\end{proof}

In order to state the next proposition, we need to introduce some terminology.  Following \cite[Section 2]{az} and \cite[Section 2]{quadrics}, if $A$ is a $\mathbb{Z}$-algebra, we let ${\sf Gr }A$ denote the category of graded right $A$-modules and we let ${\sf Tors }A$ denote the full subcategory of ${\sf Gr }A$ consisting of objects which are direct limits of right bounded modules.  We let ${\sf Proj }A$ denote the quotient of ${\sf Gr }A$ by ${\sf Tors }A$.  We also need to recall from  \cite[Definition 2.1.4]{staff} that a sequence of objects $\{\mathcal{P}(n) \}_{n \in \mathbb{Z}}$ in a locally noetherian category ${\sf C}$ is {\it ample} if
\begin{enumerate}
\item{} for every object $\mathcal{A} \in {\sf C}$, there are positive integers $l_{1},\ldots, l_{p}$ and an epimorphism $\oplus_{i=1}^{p}\mathcal{P}(-l_{i}) \longrightarrow \mathcal{A}$, and

\item{} if $\mathcal{A}$, $\mathcal{B}$ are objects in ${\sf C}$ and $f:\mathcal{A} \longrightarrow \mathcal{B}$ is an epimorphism, then the induced map $\operatorname{Hom}_{\sf C}(\mathcal{P}(-n),\mathcal{A}) \longrightarrow \operatorname{Hom}_{\sf C}(\mathcal{P}(-n),\mathcal{B})$ is surjective for all $n>>0$.
\end{enumerate}

\begin{prop} \label{prop.firstequiv}
The sequence $\{\mathcal{O}(n)\}_{n \in \mathbb{Z}}$ defined in (\ref{eqn.oi}) is an ample sequence of noetherian objects in ${\sf H}$.  Therefore, $H$ is noetherian (i.e. the category ${\sf Gr }H$ is locally noetherian) and
$$
\tilde{{\sf H}} \equiv {\sf Proj }H.
$$
\end{prop}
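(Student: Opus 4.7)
The plan is to verify the two clauses of ampleness in the sense of \cite[Definition 2.1.4]{staff} for the sequence $\{\mathcal{O}(n)\}_{n \in \mathbb{Z}}$, viewed in $\tilde{{\sf H}}$, and then invoke a standard reconstruction theorem to obtain $\tilde{{\sf H}} \equiv {\sf Proj }H$. That each $\mathcal{O}(n)$ is noetherian is automatic, since by the very definition of a noncommutative curve of genus zero every object of ${\sf H}$ is noetherian.

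First I would verify condition (1): every noetherian object $\mathcal{A}$ admits an epimorphism from a finite direct sum of the $\mathcal{O}(-l_{i})$. Because ${\sf H}$ is Krull--Schmidt, I may assume $\mathcal{A}$ is indecomposable. By \cite[Section 1.1.2]{kussin}, an indecomposable noetherian object is either a bundle, in which case it is some $\tau^{i}\mathcal{L}$ or $\tau^{i}\overline{\mathcal{L}}$ and hence equals $\mathcal{O}(-n)$ for a suitable $n$ by the definition (\ref{eqn.oi}), or it is a finite-length torsion object. In the torsion case, iterated use of the AR sequences (\ref{eqn.arsequence}) together with the existence of a nonzero map from some $\tau^{i}\mathcal{L}$ to a simple torsion object produces the required epimorphism from a finite direct sum of $\mathcal{O}(-l_{i})$'s.

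Next I would verify condition (2). Given an epimorphism $\mathcal{A}\twoheadrightarrow\mathcal{B}$ with kernel $\mathcal{K}$, the induced map $\operatorname{Hom}(\mathcal{O}(-n),\mathcal{A})\to\operatorname{Hom}(\mathcal{O}(-n),\mathcal{B})$ is surjective as soon as $\operatorname{Ext}^{1}(\mathcal{O}(-n),\mathcal{K})=0$, and by Serre duality this last group is $D\operatorname{Hom}(\mathcal{K},\tau\mathcal{O}(-n))$. It therefore suffices to show $\operatorname{Hom}(\mathcal{K},\tau\mathcal{O}(-n))=0$ for $n\gg 0$. Decomposing $\mathcal{K}$ into indecomposables, bundle summands are handled by (\ref{eqn.vanish}) and (\ref{eqn.vanish2}): for $n$ sufficiently large, $\tau\mathcal{O}(-n)$ takes the form $\tau^{j}\mathcal{L}$ or $\tau^{j}\overline{\mathcal{L}}$ with $j$ arbitrarily large, forcing the Hom spaces to vanish. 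Torsion summands are handled by the fact that no nonzero morphism from a finite-length object to a bundle exists. Since only finitely many indecomposable summands of $\mathcal{K}$ occur, a single $n$ suffices.

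The main obstacle is the torsion step in condition (1): one must be sure that the simple torsion objects of ${\sf H}$ genuinely arise as quotients of the bundles $\mathcal{O}(-n)$. Both this step and the vanishing statements in condition (2) rely on the structural facts recorded in \cite[Section 1.1.2]{kussin}. Once both ampleness conditions are established, the final clause follows from the general principle (see \cite[Section 2]{az} and \cite[Section 2]{quadrics}) that an ample sequence of noetherian generators identifies a locally noetherian Grothendieck category with ${\sf Proj}$ of its associated $\mathbb{Z}$-algebra of homomorphisms; this simultaneously forces $H$ to be noetherian and yields $\tilde{{\sf H}} \equiv {\sf Proj }H$.
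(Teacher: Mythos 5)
Your overall strategy---verify both ampleness conditions and then invoke a reconstruction theorem (the precise reference the paper uses is \cite[Theorem 11.1.1]{staff})---matches the paper's. Your treatment of condition (2) is a correct, self-contained alternative: the paper instead recalls from \cite[p.~3]{kussin} that $(\mathcal{L},\tau^{-1})$ is an ample pair, deduces $\operatorname{Ext}^{1}(\tau^{n}\mathcal{L},\mathcal{C})=\operatorname{Ext}^{1}(\tau^{n+1}\mathcal{L},\mathcal{C})=0$ for some large $n$, and transfers this vanishing to $\tau^{n+1}\overline{\mathcal{L}}$ by applying $\operatorname{Hom}(-,\mathcal{C})$ to the AR sequence $0\to\tau^{n+1}\mathcal{L}\to\tau^{n+1}\overline{\mathcal{L}}^{\oplus m}\to\tau^{n}\mathcal{L}\to 0$. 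Your route via Serre duality, the vanishing statements (\ref{eqn.vanish})--(\ref{eqn.vanish2}), and the fact that there are no nonzero maps from finite-length objects to bundles reaches the same conclusion without citing the ample-pair result, at the cost of having to decompose the kernel into indecomposables.

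Your verification of condition (1), however, has a genuine gap. The definition requires an epimorphism from $\oplus_{i}\mathcal{P}(-l_{i})$ with the $l_{i}$ \emph{positive}, so for an indecomposable bundle $\mathcal{A}$ the observation that $\mathcal{A}=\mathcal{O}(-n)$ for \emph{some} $n\in\mathbb{Z}$ does not suffice: if $\mathcal{A}=\tau^{-j}\mathcal{L}$ with $j>0$, then $\mathcal{A}=\mathcal{O}(2j-1)$ and the required index is negative. One must actually produce surjections onto every indecomposable bundle from sums of $\mathcal{O}(-l)$ with $l>0$ (for instance by splicing the epimorphisms $\mathcal{E}\to\tau^{-1}\mathcal{N}$ coming from the AR sequences), and your torsion case is likewise only sketched---a nonzero map to a simple object is epi, but an arbitrary finite-length object needs an induction on length. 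The paper sidesteps all of this by quoting that $\{\tau^{-n}\mathcal{L}\}_{n}$ is already known to be an ample sequence \cite[p.~3]{kussin}; condition (1) for $\{\mathcal{O}(n)\}$ is then immediate, since the objects $\mathcal{O}(-l)$ with $l>0$ odd are exactly the $\tau^{m}\mathcal{L}$ with $m\geq 0$. I would recommend the same shortcut, or else filling in the two steps above.
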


\begin{proof}
The first part of the proposition follows from the second part by \cite[Theorem 11.1.1]{staff}. We now prove the second part. We first recall that $(\mathcal{L},\tau^{-1})$ is an ample pair \cite[p. 3]{kussin}, i.e., the sequence $\{\tau^{-n}\mathcal{L}\}_{n \in \mathbb{Z}}$ is an ample sequence, so that part (1) of the definition of ampleness for $\{\mathcal{O}(n)\}_{n \in \mathbb{Z}}$ follows immediately.  It also follows from this that if
$$
0 \longrightarrow \mathcal{C} \longrightarrow \mathcal{A} \longrightarrow \mathcal{B} \longrightarrow 0
$$
is a short exact sequence in ${\sf H}$, then $\operatorname{Ext}^{1}(\tau^{n}\mathcal{L},C)=0=\operatorname{Ext}^{1}(\tau^{n+1}\mathcal{L},C)$ for some positive $n$.  Therefore, by applying $\operatorname{Hom}(-,C)$ to the AR sequence
$$
0 \longrightarrow \tau^{n+1}\mathcal{L} \longrightarrow \tau^{n+1}\overline{\mathcal{L}}^{\oplus m} \longrightarrow \tau^{n}\mathcal{L} \longrightarrow 0
$$
where $m=1$ if $M$ is a $(1,4)$-bimodule and $m=2$ if $M$ is a $(2,2)$-bimodule, we conclude that
$$
\operatorname{Ext}^{1}(\tau^{n+1}\overline{\mathcal{L}},C)=0.
$$
Part (2) in the definition of ampleness for $\{\mathcal{O}(n)\}_{n \in \mathbb{Z}}$ follows.
\end{proof}

\begin{thm} \label{thm.almostring}
Suppose ${\sf H}$ is a homogeneous noncommutative curve of genus zero with underlying bimodule $M$ such that $\operatorname{End}(\mathcal{L})$ and $\operatorname{End}(\overline{\mathcal{L}})$ are commutative.  Then there exists a $k$-linear isomorphism of $\mathbb{Z}$-algebras
$$
\mathbb{S}^{n.c.}(M) \longrightarrow H.
$$
Therefore, $\mathbb{P}^{n.c.}(M) \equiv \tilde{{\sf H}}$.

Conversely, if $K$ and $L$ are finite extensions of $k$ and $N$ is a $k$-central $K-L$-bimodule of left-right dimension $(1,4)$ or $(2,2)$, then there exists a homogenous noncommutative curve ${\sf H}$ with underlying bimodule $N$ such that $\mathbb{P}^{n.c.}(N) \equiv \tilde{\sf H}$.
\end{thm}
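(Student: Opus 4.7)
The plan is to treat the two implications separately. For the forward direction, I will construct the $\mathbb{Z}$-algebra isomorphism $\mathbb{S}^{n.c.}(M) \to H$ by prescribing it in degree one and extending multiplicatively, then verifying it descends through the defining relations and is a bijection in each bidegree via the dimension count already in hand.

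In detail, the definition of $\mathcal{O}(n)$ in (\ref{eqn.oi}) makes $H_{i,i+1} = \operatorname{Hom}(\mathcal{O}(-i-1),\mathcal{O}(-i))$ agree exactly with the Hom space appearing as the codomain of $\Psi_i$ in Proposition \ref{prop.01}. Hence $\Psi_i:\mathbb{S}^{n.c.}(M)_{i,i+1}=M^{i*}\to H_{i,i+1}$ is a bimodule isomorphism over the pair of endomorphism rings that coincide with $\mathbb{S}^{n.c.}(M)_{ii}$ and $H_{ii}$ under the obvious identifications. Tensoring gives a $k$-linear map $T_{ij}\to H_{ij}$ whose image lands in the subspace spanned by iterated compositions. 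The second clause of Proposition \ref{prop.01} asserts $Q_i \subseteq \ker(\Psi_i\otimes \Psi_{i+1})$, and since $R_{ij}$ is, by definition, a sum of tensor "sandwiches" of the $Q_\ell$'s, the map factors through $\mathbb{S}^{n.c.}(M)_{ij}=T_{ij}/R_{ij}$. Compatibility with the $\mathbb{Z}$-algebra multiplications is automatic, since in both algebras the multiplication of higher degree pieces is induced by tensor product of the degree one generators followed by composition.

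To upgrade this to an isomorphism, Proposition \ref{prop.newdimcount} gives equality of right dimensions on both sides, so it suffices to verify surjectivity, i.e.\ that $H$ is generated by its degree one components $H_{i,i+1}$. This is the step I expect to be the main obstacle. The strategy is to work through the table of AR sequences case by case: applying $\operatorname{Hom}(\mathcal{O}(-j),-)$ to the AR sequence starting at $\mathcal{O}(-i-1)$ yields, using the vanishing statements (\ref{eqn.vanish}) and (\ref{eqn.vanish2}), a short exact sequence expressing $H_{ij}$ as a quotient of a direct sum of tensor products of $H_{i,i+1}$ with $H_{i+1,j}$; one then iterates. Combined with the dimension equality, this forces the constructed map to be an isomorphism in every bidegree, yielding the $\mathbb{Z}$-algebra isomorphism $\mathbb{S}^{n.c.}(M)\cong H$. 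The equivalence $\mathbb{P}^{n.c.}(M)\equiv \tilde{\sf H}$ then follows by taking $\sf{Proj}$ of both sides and invoking Proposition \ref{prop.firstequiv}.

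For the converse, let $N$ be a $k$-central bimodule of dimension $(1,4)$ or $(2,2)$ and set ${\sf H}$ equal to the full subcategory of noetherian objects in $\mathbb{P}^{n.c.}(N)$. The properties required of a noncommutative curve of genus zero (finite dimensional Hom and Ext, existence of an AR translation with Serre duality, the presence of a tilting object, and an object of infinite length) follow from van den Bergh's construction in \cite{vandenbergh}; homogeneity can be extracted from the two-periodic AR structure of $\mathbb{S}^{n.c.}(N)$ together with the ampleness of the image sequence $\{\mathcal{O}(n)\}$ in $\mathsf{Proj}\,\mathbb{S}^{n.c.}(N)$. Finally, choosing $\mathcal{L}$ to be the image of $e_0\mathbb{S}^{n.c.}(N)$ in the quotient category, the associated indecomposable bundle $\overline{\mathcal{L}}$ is identified (up to a power) with the image of $e_{-1}\mathbb{S}^{n.c.}(N)$, and the degree one component $\mathbb{S}^{n.c.}(N)_{01}=N$ is precisely $\operatorname{Hom}(\mathcal{L},\overline{\mathcal{L}})$, so the underlying bimodule of $\sf H$ is $N$, as required.
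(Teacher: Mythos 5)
Your forward direction matches the paper's argument: the map is defined in degree one by the isomorphisms $\Psi_{i}$ of Proposition \ref{prop.01}, descends through the relations $R_{ij}$ by the second clause of that proposition, is surjective because the factorization property of AR sequences makes the composition map onto, and is injective by the dimension count of Proposition \ref{prop.newdimcount}; the equivalence $\mathbb{P}^{n.c.}(M)\equiv\tilde{\sf H}$ then follows from Proposition \ref{prop.firstequiv}. (The paper obtains surjectivity in one line from the factorization property rather than by iterating long exact sequences from the AR sequences, but your route reaches the same conclusion.)

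The genuine gap is in the converse. You propose to verify the axioms of a homogeneous noncommutative curve of genus zero directly for the noetherian objects of $\mathbb{P}^{n.c.}(N)$, asserting that finite-dimensionality of Hom and Ext, the AR translation with Serre duality, a tilting object, an object of infinite length, and homogeneity ``follow from van den Bergh's construction'' or ``can be extracted from the two-periodic AR structure.'' None of these is automatic: \cite{vandenbergh} does not produce a tilting object for $\mathbb{P}^{n.c.}(N)$, and homogeneity requires showing $\operatorname{Ext}^{1}(\mathcal{S},\mathcal{S})\neq 0$ for \emph{every} simple object $\mathcal{S}$, which presupposes a classification of the simples of $\mathbb{P}^{n.c.}(N)$ that you have not supplied. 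Each of these claims would be a substantial result in its own right, so as written the converse is not proved. The paper sidesteps all of this: it quotes Kussin \cite[Section 0.5.2]{kussin} for the \emph{existence} of a homogeneous noncommutative curve ${\sf H}$ whose underlying bimodule is $N$, and then applies the already-established forward direction to that ${\sf H}$ to get $\mathbb{P}^{n.c.}(N)\equiv\tilde{\sf H}$. Since the statement of the converse only asserts the existence of such an ${\sf H}$ together with the equivalence, this indirect route suffices; your direct route would prove something stronger, but only at the cost of the several unproved assertions above.
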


\begin{proof}
We define a $\mathbb{Z}$-algebra isomorphism
$$
\Lambda: \mathbb{S}^{n.c.}(M) \longrightarrow H
$$
as follows.  We let
$$
\Lambda_{ij}=0
$$ for $i>j$.  For $i$ odd we let
$$
\Lambda_{ii}:\operatorname{End}(\mathcal{L}) \longrightarrow \operatorname{End}(\mathcal{O}(-i))=\operatorname{End}(\tau^{\frac{i-1}{2}}\mathcal{L})
$$
be induced by $\tau^{\frac{i-1}{2}}$, and for $i$ even we let
$$
\Lambda_{ii}:\operatorname{End}(\overline{\mathcal{L}}) \longrightarrow \operatorname{End}(\tau^{\frac{i}{2}}\overline{\mathcal{L}})
$$
be induced by $\tau^{\frac{i}{2}}$.  For $i,j \in \mathbb{Z}$ with $i<j$, we let
$$
\Lambda_{ij}:\mathbb{S}^{n.c.}(M)_{ij} \longrightarrow \operatorname{Hom}(\mathcal{O}(-j),\mathcal{O}(-i))
$$
denote the map induced by the composition
\begin{eqnarray*}
M^{i*}\otimes \cdots \otimes M^{j-1*} & \rightarrow & \operatorname{Hom}(\mathcal{O}(-i-1),\mathcal{O}(-i)) \otimes \cdots \otimes \operatorname{Hom}(\mathcal{O}(-j),\mathcal{O}(-j+1)) \\
& \rightarrow & \operatorname{Hom}(\mathcal{O}(-j),\mathcal{O}(-i))
\end{eqnarray*}
whose first arrow is from Proposition \ref{prop.01} and whose second arrow is composition.  By Proposition \ref{prop.01}, this map does indeed factor through $\mathbb{S}^{n.c.}(M)_{ij}$.  To show that $\Lambda_{ij}$ is an isomorphism in this case, we first note that the factorization property of AR sequences implies that the second map in the above composition is surjective.  Therefore, $\Lambda_{ij}$ is surjective.  The fact that $\Lambda_{ij}$ is injective now follows from Proposition \ref{prop.newdimcount}.

To complete the proof of the first part of the theorem, it remains to check that $\Lambda$ is compatible with multiplication.  This straightforward exercise is left to the reader.

The second part of the theorem follows from the fact that given $N$ as in the hypotheses, there exists an ${\sf H}$ with underlying bimodule $N$ by \cite[Section 0.5.2]{kussin}.  Then one applies the first part of the theorem.
\end{proof}
The next result follows immediately from Theorem \ref{thm.main}.
\begin{corollary} \label{newcor.main}
A homogeneous noncommutative curve of genus zero, ${\sf H}$, with underlying bimodule $M$ is arithmetic if and only if $\operatorname{End}(\mathcal{L})$ is $k$-isomorphic to $\operatorname{End}(\overline{\mathcal{L}})$ and each is commutative.  In this case, $\mathbb{P}^{n.c.}(M) \equiv \tilde{{\sf H}}$.
\end{corollary}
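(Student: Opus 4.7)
The plan is to deduce Corollary \ref{newcor.main} directly from Theorem \ref{thm.almostring} (equivalently, Theorem \ref{thm.main}) together with the definition of arithmetic noncommutative projective line recalled in the introduction (i.e., a space of the form $\mathbb{P}^{n.c.}(V)$ with $V$ a $k$-central $K$-$K$-bimodule of left-right dimension $(2,2)$). The only substantive ingredient beyond Theorem \ref{thm.almostring} is an elementary $k$-dimension count that rules out the $(1,4)$ case whenever the endomorphism fields have the same $k$-dimension.

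For the forward implication, I would assume $K:=\operatorname{End}(\mathcal{L})$ and $L:=\operatorname{End}(\overline{\mathcal{L}})$ are commutative and $k$-isomorphic. In the $(1,4)$-case, computing $\dim_k M$ in two ways yields $\dim_k K = \dim_K M\cdot \dim_k K$ on one side and $\dim_L M\cdot \dim_k L = 4\dim_k L$ on the other, forcing $\dim_k K = 4\dim_k L$ and contradicting $K\cong L$ as $k$-algebras. Hence $M$ has left-right dimension $(2,2)$, and, after identifying $L$ with $K$, it is a $k$-central $K$-$K$-bimodule of dimension $(2,2)$. Theorem \ref{thm.almostring} then gives $\tilde{{\sf H}}\equiv \mathbb{P}^{n.c.}(M)$, so $\tilde{{\sf H}}$ is an arithmetic noncommutative projective line.

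For the converse implication, I would assume $\tilde{{\sf H}}\equiv \mathbb{P}^{n.c.}(V)$ for some $k$-central $K$-$K$-bimodule $V$ of left-right dimension $(2,2)$. By the converse half of Theorem \ref{thm.almostring}, $\mathbb{P}^{n.c.}(V)$ is of the form $\tilde{{\sf H}'}$ for a homogeneous noncommutative curve of genus zero ${\sf H}'$ with underlying bimodule $V$. Passing to noetherian subcategories (which are preserved by any $k$-linear equivalence), the equivalence $\tilde{{\sf H}}\equiv \tilde{{\sf H}'}$ restricts to a $k$-linear equivalence ${\sf H}\equiv {\sf H}'$. Since line bundles, AR sequences, and endomorphism rings of objects are all preserved by $k$-linear equivalences, the underlying bimodule of ${\sf H}$ is isomorphic to $V$, whence $\operatorname{End}(\mathcal{L})\cong K\cong \operatorname{End}(\overline{\mathcal{L}})$ and both are commutative.

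I do not anticipate a serious obstacle here: essentially all the work is done by Theorem \ref{thm.almostring}, and the only nontrivial step is the dimension-count argument ruling out the $(1,4)$-case. The mildly delicate point worth writing out carefully is the verification in the converse direction that the ``underlying bimodule'' is invariant under equivalence of the ambient curves, but this reduces to the observation that the data defining the bimodule $M$ (the line bundle $\mathcal{L}$, its AR successor $\overline{\mathcal{L}}$, and the Hom-space between them with its bimodule structure) are categorical and hence transported by any $k$-linear equivalence.
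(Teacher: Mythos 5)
Your proposal is correct and matches the paper's approach: the paper gives no separate argument, stating only that the corollary follows immediately from Theorem \ref{thm.almostring}, and your write-up supplies exactly the two implicit steps (the $k$-dimension count $\dim_k\operatorname{End}(\overline{\mathcal{L}})=4\dim_k\operatorname{End}(\mathcal{L})$ ruling out the $(1,4)$ case when the fields are $k$-isomorphic, and the transport of the categorically defined data $\mathcal{L}$, $\overline{\mathcal{L}}$, $\operatorname{Hom}(\mathcal{L},\overline{\mathcal{L}})$ along the restricted equivalence for the converse). The only blemish is the garbled orientation in the dimension count (it should read $\dim_k M=\dim_{\operatorname{End}(\mathcal{L})}M\cdot\dim_k\operatorname{End}(\mathcal{L})=\dim_{\operatorname{End}(\overline{\mathcal{L}})}M\cdot\dim_k\operatorname{End}(\overline{\mathcal{L}})$), which does not affect the conclusion.
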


\section{Applications}
In this section, we assume $V$ and $W$ are $(2,2)$-bimodules over two copies of the same field $K$, which is of finite dimension over $k$.  We will need to recall three types of canonical equivalences between noncommutative projective lines, studied in \cite{nyman}.  If $\delta, \epsilon \in \operatorname{Gal}(K/k)$ and we define
$$
\zeta_{i} = \begin{cases} \delta \mbox{ if $i$ is even} \\ \epsilon \mbox{ if $i$ is odd,} \end{cases}
$$
then twisting by the sequence $\{K_{\zeta_{i}}\}_{i \in \mathbb{Z}}$ (see Section \ref{section.questions} for the definition of $K_{\zeta_{i}}$) in the sense of \cite[Section 3.2]{vandenbergh} induces an equivalence of categories
$$
T_{\delta,\epsilon}:\mathbb{P}^{n.c.}(V) \longrightarrow \mathbb{P}^{n.c.}(K_{\delta^{-1}} \otimes_{K} V \otimes_{K} K_{\epsilon}).
$$
Next, let $\phi:V \longrightarrow W$ denote an isomorphism of $K-K$-bimodules.  Then $\phi$ induces an isomorphism $\mathbb{S}^{n.c.}(V) \longrightarrow \mathbb{S}^{n.c.}(W)$, which in turn induces an equivalence
$$
\Phi:\mathbb{P}^{n.c.}(V) \longrightarrow \mathbb{P}^{n.c.}(W).
$$
Finally, shifting graded modules by an integer, $i$, induces an equivalence
$$
[i]:\mathbb{P}^{n.c.}(V) \longrightarrow \mathbb{P}^{n.c.}(V^{i*}),
$$
and we have the following classification of equivalences between noncommutative projective lines (\cite[Theorem 6.9]{nyman}):

\begin{thm} \label{thm.auto}
Suppose $k$ is a perfect field with $\operatorname{char }k \neq 2$ and let $F:\mathbb{P}^{n.c.}(V) \longrightarrow \mathbb{P}^{n.c.}(W)$ be a $k$-linear equivalence.  Then there exists $\delta, \epsilon \in \mbox{Gal }(K/k)$, an isomorphism $\phi:K_{\delta^{-1}} \otimes_{K} V \otimes_{K} K_{\epsilon} \longrightarrow W^{i^*}$ inducing an equivalence $\Phi: \mathbb{P}^{n.c.}(K_{\delta^{-1}} \otimes_{K} V \otimes_{K} K_{\epsilon}) \longrightarrow \mathbb{P}^{n.c.}(W^{i*})$ and an integer $i$ such that
$$
F \cong [-i] \circ \Phi  \circ T_{\delta, \epsilon}.
$$
Furthermore, $\delta$, $\epsilon$ and $i$ are unique up to natural equivalence, while $\Phi$ is naturally equivalent to $\Phi'$ if and only if there exist nonzero $a, b \in K$ such that $\phi' \phi^{-1}(w) = a \cdot w \cdot b$ for all $w \in W^{i*}$.
\end{thm}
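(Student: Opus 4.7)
The plan is to reconstruct the data $(i,\delta,\epsilon,\phi)$ from $F$ by tracking its action on the canonical ample sequence of line bundles $\{\mathcal{O}_V(n)\}_{n \in \mathbb{Z}}$ from (\ref{eqn.oi}). By Theorem \ref{thm.almostring} and Proposition \ref{prop.firstequiv}, both sides are $\operatorname{Proj}$ of the $\mathbb{Z}$-algebra of Hom-spaces between these line bundles, so $F$ is determined up to natural equivalence by the induced data on these Hom-spaces. The strategy is to successively normalize $F$ by composing on the right with $T_{\delta,\epsilon}$ and on the left with $[-i]$ until nothing is left but the bimodule isomorphism $\phi$.

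First I would extract $i$. An equivalence between noncommutative curves of genus zero preserves line bundles (since ``rank one indecomposable bundle'' is intrinsic) and commutes with the Auslander--Reiten translation up to natural isomorphism (since $\tau$ is characterized by Serre duality). Because the line bundles in $\mathbb{P}^{n.c.}(W)$ form a single $\tau$-orbit indexed by $\mathbb{Z}$, there is a unique integer $i$ with $F(\mathcal{O}_V(n)) \cong \mathcal{O}_W(n+i)$ for all $n$. Replacing $F$ by $[-i]\circ F$ reduces to the case $i=0$, noting that the shift $[i]$ corresponds to the right-dual operation on the generating bimodule, whence the appearance of $W^{i*}$ in the statement.

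Next I would extract $\delta$ and $\epsilon$. With line bundles aligned, $F$ induces $k$-algebra automorphisms on $\operatorname{End}(\mathcal{O}_V(0))=K$ and $\operatorname{End}(\mathcal{O}_V(-1))=K$, which by the Galois correspondence give unique $\delta,\epsilon \in \operatorname{Gal}(K/k)$. The twist $T_{\delta,\epsilon}$ is defined precisely to absorb these scalar actions, so after pre-composing with $T_{\delta,\epsilon}$ the resulting equivalence acts as the identity on both endomorphism rings. Its effect on the degree-$(0,1)$ component is then an honest $K$-$K$-bimodule isomorphism $\phi:K_{\delta^{-1}}\otimes V \otimes K_\epsilon \to W^{i*}$, which extends uniquely to an isomorphism of the full noncommutative symmetric algebras by the universal property of $\mathbb{S}^{n.c.}$: it is generated in consecutive degrees subject only to the $Q_i$-relations, which are determined functorially by the generating bimodule via the recipe of Section \ref{section.ncsym} and Proposition \ref{prop.01}. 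This yields the factorization $F \cong [-i]\circ \Phi \circ T_{\delta,\epsilon}$.

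For uniqueness, any two such factorizations differ by an autoequivalence of $\mathbb{P}^{n.c.}(W^{i*})$ that fixes each line bundle and acts trivially on endomorphism rings; such autoequivalences are given by scalar multiplication on each Hom-space by a pair $(a,b)\in K^\times \times K^\times$, yielding exactly the scaling relation $\phi'\phi^{-1}(w)=a\cdot w\cdot b$ in the statement. The main obstacle is the object-to-functor lifting: showing that the object-level matching promotes to a natural equivalence of functors. This requires reconstructing an equivalence from its induced $\mathbb{Z}$-algebra isomorphism, which uses the ampleness from Proposition \ref{prop.firstequiv} together with a Gabriel-type reconstruction theorem for $\operatorname{Proj}$ of $\mathbb{Z}$-algebras. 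A secondary technical point is verifying that the degree-one isomorphism $\phi$ really extends to all degrees compatibly with multiplication, which relies on Lemma \ref{lemma.eta} to transport the duality and unit maps used in building $\mathbb{S}^{n.c.}(W^{i*})$.
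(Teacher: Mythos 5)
You should first note that the paper does not actually prove Theorem \ref{thm.auto}: it is imported verbatim from \cite[Theorem 6.9]{nyman}, so there is no internal proof to compare against. Judged on its own terms, your outline follows the strategy one would expect (and which \cite{nyman} in fact uses): reduce the equivalence to an isomorphism of the $\mathbb{Z}$-algebras built on the canonical ample sequences via Theorem \ref{thm.almostring} and Proposition \ref{prop.firstequiv}, peel off the shift $i$ from the action on line bundles, peel off $(\delta,\epsilon)$ from the action on endomorphism rings, and recognize what remains as induced by a bimodule isomorphism in degree $(0,1)$. That skeleton is sound.

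However, the two steps you yourself flag as ``obstacles'' are precisely the load-bearing ones, and leaving them unresolved is a genuine gap. (a) The object-to-functor lifting: you must show that a $k$-linear equivalence of ${\sf Proj}$'s matching the ample sequences term by term is naturally isomorphic to the functor induced by the resulting $\mathbb{Z}$-algebra isomorphism; this requires choosing the isomorphisms $F(\mathcal{O}_V(n)) \cong \mathcal{O}_W(n+i)$ coherently and invoking an Artin--Zhang/Van den Bergh type uniqueness statement for functors agreeing on an ample sequence, not merely Gabriel-style reconstruction of the abelian category. (b) The uniqueness clause: your assertion that an autoequivalence fixing every line bundle and acting trivially on endomorphism rings is ``scalar multiplication by a pair $(a,b)$'' is exactly the content of the $\phi'\phi^{-1}(w)=a\cdot w\cdot b$ criterion and cannot be assumed; a priori the natural isomorphism contributes a separate unit of $K$ at every index $n$, and one must use compatibility with composition in the $\mathbb{Z}$-algebra to show these are all determined by the two units at positions $0$ and $-1$. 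Two smaller points: the line bundles do not form a single $\tau$-orbit --- they form two interleaved $\tau$-orbits, which is the whole substance of orbit cases II versus III; what you need, and what is true, is that they form a single $\mathbb{Z}$-indexed chain under the irreducible maps, whose order $F$ must preserve. Finally, your argument nowhere uses the hypotheses that $k$ is perfect and $\operatorname{char}k \neq 2$; in \cite{nyman} these enter (via semisimplicity of $K\otimes_k K$ and the classification of $(2,2)$-bimodules) in controlling which twists $K_{\delta^{-1}}\otimes V\otimes K_{\epsilon}$ can be isomorphic to which duals $W^{i*}$, so their absence signals that bimodule-theoretic input is being taken for granted.
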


\subsection{Applications to arithmetic noncommutative projective lines}

\subsubsection{$\mathbb{S}^{n.c.}(V)$ is a domain}
In \cite[Theorem 3.7]{nyman}, homological techniques are employed to prove that if $V$ is a $K-K$-bimodule of left-right dimension $(2,2)$, then $\mathbb{S}^{n.c.}(V)$ is a domain in the sense that if $x \in \mathbb{S}^{n.c.}(V)_{ij}$ and $y \in \mathbb{S}^{n.c.}(V)_{jl}$ then $xy=0$ implies that $x=0$ or $y=0$.  Theorem \ref{thm.main} allows us to produce a much shorter proof of a generalization.

\begin{prop} \label{prop.domain}
Suppose $K$ and $L$ are finite extensions of $k$ and $N$ is a $k$-central $K-L$-bimodule of left-right dimension $(2,2)$.  Then $\mathbb{S}^{n.c.}(N)$ is a domain in the sense above.
\end{prop}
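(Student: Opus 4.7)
The plan is to use Theorem \ref{thm.almostring} to convert the domain statement into a categorical assertion about composition of nonzero morphisms, and then verify it by localizing at the generic point. By Theorem \ref{thm.almostring}, since $N$ is a $(2,2)$-bimodule, there exists a homogeneous noncommutative curve of genus zero ${\sf H}$ with underlying bimodule $N$ and a $k$-linear isomorphism of $\mathbb{Z}$-algebras $\mathbb{S}^{n.c.}(N) \cong H$, where $H_{ij} = \operatorname{Hom}_{{\sf H}}(\mathcal{O}(-j), \mathcal{O}(-i))$ and multiplication is composition. The domain property therefore translates into the statement that for every $i \le j \le l$ and nonzero morphisms $\tilde y : \mathcal{O}(-l) \to \mathcal{O}(-j)$ and $\tilde x : \mathcal{O}(-j) \to \mathcal{O}(-i)$ in ${\sf H}$, the composite $\tilde x \circ \tilde y$ is nonzero.

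A crucial simplification afforded by the $(2,2)$ hypothesis is that every $\mathcal{O}(n)$ is a line bundle: applying the rank function (additive on short exact sequences) to the AR sequence $0 \to \mathcal{L} \to \overline{\mathcal{L}}^{\oplus 2} \to \tau^{-1}\mathcal{L} \to 0$ from the table in Section~3 gives $2\operatorname{rank}(\overline{\mathcal{L}}) = 2$, so $\overline{\mathcal{L}}$, and hence every $\mathcal{O}(n)$, has rank one. Next, pass to the Serre quotient $\pi : {\sf H} \to {\sf H}/{\sf T}$, where ${\sf T}$ is the full subcategory of finite length objects. For a homogeneous noncommutative curve of genus zero, ${\sf H}/{\sf T}$ is equivalent to the category of finite dimensional right vector spaces over the function skew field $k({\sf H}) := \operatorname{End}(\pi(\mathcal{L}))$, and each line bundle goes to a one dimensional $k({\sf H})$-vector space. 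Moreover $\pi$ is faithful on morphisms between bundles: if $\varphi : \mathcal{A} \to \mathcal{B}$ is a nonzero morphism between bundles, then $\operatorname{im}(\varphi)$ is a nonzero subobject of the bundle $\mathcal{B}$, hence itself a bundle (subobjects of bundles are bundles), hence not in ${\sf T}$, so $\pi(\varphi) \ne 0$.

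Applying this to $\tilde x$ and $\tilde y$, one obtains nonzero morphisms $\pi(\tilde x)$ and $\pi(\tilde y)$ between one dimensional $k({\sf H})$-vector spaces, i.e.\ nonzero elements of $k({\sf H})$ after choosing bases of the source and target. Since $k({\sf H})$ is a skew field, their product is nonzero, so $\pi(\tilde x \circ \tilde y) = \pi(\tilde x) \circ \pi(\tilde y) \ne 0$, and hence $\tilde x \circ \tilde y \ne 0$. The main technical input needed beyond Theorem \ref{thm.almostring} is the description of ${\sf H}/{\sf T}$ as vector spaces over a skew field with bundles mapping to modules whose dimension equals their bundle rank; this is the only place a nontrivial structural fact about genus zero noncommutative curves is used, and it is well documented in \cite{kussin}. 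If this description were to require delicate justification in our generality (with $K$ and $L$ potentially distinct), that would be the main obstacle, but the rank-one reduction above means no further dimension-counting is needed once the generic-point functor is in hand.
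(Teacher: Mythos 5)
Your proof is correct, and its first, essential step is exactly the paper's: use Theorem \ref{thm.almostring} to replace $\mathbb{S}^{n.c.}(N)$ by the isomorphic $\mathbb{Z}$-algebra $H$ built from $\operatorname{Hom}$-spaces between the $\mathcal{O}(n)$ in a curve ${\sf H}$ with underlying bimodule $N$, so that the domain property becomes the assertion that composites of nonzero morphisms between the $\mathcal{O}(n)$ are nonzero. Where you diverge is in how that assertion is justified. The paper simply cites \cite[Lemma 1.3]{lenzingreiten}, which says that a nonzero morphism out of a line bundle is a monomorphism (this is the same lemma the paper already used to see that $\operatorname{End}(\mathcal{L})$ is a division ring); since in the $(2,2)$ case every $\mathcal{O}(n)$ is a line bundle, both factors are monomorphisms and the composite is a nonzero monomorphism. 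You instead pass to the Serre quotient by the finite-length objects and use that ${\sf H}/{\sf T}\simeq \operatorname{mod}k({\sf H})$ for a skew field $k({\sf H})$, together with faithfulness of the quotient functor on morphisms between bundles. Both arguments are sound; the monomorphism lemma is the more economical route (one line, no appeal to the structure of the quotient category), while your generic-point argument requires the heavier structural fact about ${\sf H}/{\sf T}$ but makes transparent \emph{why} the statement is a ``domain'' statement, namely that everything embeds into the function skew field. Note also that your rank-one computation for $\overline{\mathcal{L}}$ is essentially built into the definition of rank via the quotient category in \cite[p.~136]{lenzingreiten}, so that step could be compressed.
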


\begin{proof}
Theorem \ref{thm.main} implies it is sufficient to prove the result for the isomorphic ring $H$.  But in this case the result follows immediately from \cite[Lemma 1.3]{lenzingreiten}.
\end{proof}

\subsubsection{The Bondal-Orlov Theorem}
It is known that homogeneous noncommutative curves of genus zero have a Bondal-Orlov type reconstruction theorem.  Therefore, it follows from Corollary \ref{cor.main} that the same holds for arithmetic noncommutative projective lines.  Since we could not find a reference, we include a proof of this fact below.  To this end, we recall from \cite[Section 3]{lenzing} that the repetitive category of an abelian category $\sf{A}$ is the additive closure of the union of disjoint copies of $\sf{A}$ (labeled ${\sf A}[n]$ for $n \in \mathbb{Z}$ with objects labeled $A[n]$) with morphisms given by $\operatorname{Hom}(A[m],B[n]) = \operatorname{Ext}^{n-m}_{\sf A}(A,B)$ and composition given by Yoneda product of extensions.
The notation for the second part of the next theorem is defined before, and in the statement of, Theorem \ref{thm.auto}.
\begin{thm} \label{thm.bondalorlov}
Let ${\sf H}_{1}$ and ${\sf H}_{2}$ be homogeneous noncommutative curves of genus zero.  If there is a $k$-linear triangulated equivalence $F:D^{b}({\sf H}_{1}) \longrightarrow D^{b}({\sf H}_{2})$, then $F \cong T^{i} \circ G$ where $G$ is induced by a $k$-linear equivalence $\underline{G}:{\sf H}_{1} \longrightarrow {\sf H}_{2}$ and $T$ is the translation functor in the derived category.  In particular, if $\tilde{\sf H}_{1}=\mathbb{P}^{n.c.}(V)$, $\tilde{\sf H}_{2}=\mathbb{P}^{n.c.}(W)$, $k$ is perfect and $\operatorname{char }k \neq 2$, then $\underline{G} \cong  [-i] \circ \Phi  \circ T_{\delta, \epsilon}$.
\end{thm}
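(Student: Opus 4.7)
The plan is to show that $F$ carries the heart ${\sf H}_{1} \subset D^{b}({\sf H}_{1})$ to a shift of the heart ${\sf H}_{2} \subset D^{b}({\sf H}_{2})$. I would first observe that ${\sf H}_{j}$ is hereditary (since $\operatorname{Ext}^{n} = 0$ for $n \geq 2$ by Serre duality and the definition of noncommutative curve of genus zero), so every object of $D^{b}({\sf H}_{j})$ splits as $\bigoplus_{n} H^{n}(-)[-n]$ and each indecomposable of $D^{b}({\sf H}_{j})$ has the form $Y[n]$ for a unique indecomposable $Y \in {\sf H}_{j}$ and $n \in \mathbb{Z}$. Consequently $F$ assigns to each indecomposable $X \in {\sf H}_{1}$ an indecomposable $Y_{X} \in {\sf H}_{2}$ and an integer $d(X)$ with $F(X) \cong Y_{X}[d(X)]$, and the central task is to show that $d$ is constant.

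The argument rests on two basic tools. First, every $k$-linear triangulated equivalence commutes up to natural isomorphism with the Serre functor; combined with $F \circ [1] \cong [1] \circ F$ and the factorization $S = \tau \circ [1]$ (where $\tau$ denotes the shift-commuting extension to $D^{b}$ of the AR translate of ${\sf H}_{j}$), this forces $F$ to commute with $\tau$, and hence forces $d$ to be constant on $\tau$-orbits of indecomposables. Second, a nonzero morphism $\operatorname{Hom}_{{\sf H}_{1}}(X,X') \neq 0$ yields $\operatorname{Ext}^{d(X') - d(X)}_{{\sf H}_{2}}(Y_{X}, Y_{X'}) \neq 0$, which by hereditariness of ${\sf H}_{2}$ pins $d(X') - d(X)$ to $\{0,1\}$; analogously, a nonzero class in $\operatorname{Ext}^{1}_{{\sf H}_{1}}(X,X')$ pins $d(X') - d(X)$ to $\{-1,0\}$.

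With these two tools, applied to the AR sequence $0 \to \mathcal{L} \to \overline{\mathcal{L}}^{\oplus m} \to \tau^{-1}\mathcal{L} \to 0$ together with $d(\tau^{-1}\mathcal{L}) = d(\mathcal{L})$, the nonzero morphisms $\mathcal{L} \to \overline{\mathcal{L}}$ and $\overline{\mathcal{L}} \to \tau^{-1}\mathcal{L}$ sandwich $d(\overline{\mathcal{L}})$ to equal $d(\mathcal{L})$, so $d$ is constant on all indecomposable bundles. For a simple torsion sheaf $\mathcal{S}$, ampleness of $\{\mathcal{O}(n)\}_{n \in \mathbb{Z}}$ from Proposition \ref{prop.firstequiv} supplies an indecomposable bundle $\mathcal{B}$ with $\operatorname{Hom}(\mathcal{B}, \mathcal{S}) \neq 0$; the isomorphism $\tau \mathcal{S} \cong \mathcal{S}$ and Serre duality give $\operatorname{Ext}^{1}(\mathcal{S}, \mathcal{B}) \cong D\operatorname{Hom}(\mathcal{B}, \mathcal{S}) \neq 0$; and the same sandwiching forces $d(\mathcal{S}) = d(\mathcal{B}) = d(\mathcal{L})$. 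Any indecomposable torsion object $\mathcal{T}$ in the homogeneous tube containing $\mathcal{S}$ admits both a subobject isomorphic to $\mathcal{S}$ and a quotient isomorphic to $\mathcal{S}$, so nonzero morphisms $\mathcal{S} \to \mathcal{T}$ and $\mathcal{T} \to \mathcal{S}$ give $d(\mathcal{T}) = d(\mathcal{S}) = d(\mathcal{L})$.

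Setting $i$ equal to this common value, $T^{-i} \circ F$ maps every indecomposable of ${\sf H}_{1}$ into ${\sf H}_{2}$, hence restricts to a $k$-linear equivalence $\underline{G}:{\sf H}_{1} \to {\sf H}_{2}$ whose derived extension $G$ satisfies $F \cong T^{i} \circ G$. The explicit form of $\underline{G}$ in the arithmetic case will then follow by applying Corollary \ref{cor.main} to identify $\tilde{\sf H}_{j}$ with $\mathbb{P}^{n.c.}(V)$, $\mathbb{P}^{n.c.}(W)$ and invoking Theorem \ref{thm.auto}. The main obstacle I expect is the constancy of $d$: the bundle part rests on combining AR sequences with $\tau$-invariance, the torsion part requires the ampleness of Proposition \ref{prop.firstequiv} to link non-locally-free indecomposables back to bundles, and the hereditariness of ${\sf H}_{2}$ is essential throughout to constrain the possible shifts.
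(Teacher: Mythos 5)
Your overall strategy is the same as the paper's (reduce to showing $F$ shifts the heart uniformly, using commutation with $\tau$ and Hom/Ext non-vanishing between indecomposables), and your treatment of the bundles is correct: the AR sequence genuinely sandwiches $d(\overline{\mathcal{L}})-d(\mathcal{L})$ between $\geq 0$ (from $\mathcal{L}\to\overline{\mathcal{L}}$) and $\leq 0$ (from $\overline{\mathcal{L}}\to\tau^{-1}\mathcal{L}$ together with $d(\tau^{-1}\mathcal{L})=d(\mathcal{L})$). But the torsion step has a genuine gap. From $\operatorname{Hom}(\mathcal{B},\mathcal{S})\neq 0$ you get $d(\mathcal{S})-d(\mathcal{B})\in\{0,1\}$, and from $\operatorname{Ext}^{1}(\mathcal{S},\mathcal{B})\neq 0$ you get $d(\mathcal{B})-d(\mathcal{S})\in\{-1,0\}$ --- which is the \emph{same} constraint $d(\mathcal{S})-d(\mathcal{B})\in\{0,1\}$ restated, not the opposite inequality. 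So nothing you have written rules out $d(\mathcal{S})=d(\mathcal{B})+1$, i.e.\ the scenario in which $F$ sends every bundle to degree $0$ and every torsion object to degree $1$. The two constraints that would close the sandwich, $\operatorname{Hom}_{{\sf H}_{1}}(\mathcal{S},\mathcal{B}')\neq 0$ or $\operatorname{Ext}^{1}_{{\sf H}_{1}}(\mathcal{B},\mathcal{S})\neq 0$, are both unavailable: there are no nonzero maps from a finite-length object to a bundle, and $\operatorname{Ext}^{1}(\mathcal{B},\mathcal{S})\cong D\operatorname{Hom}(\mathcal{S},\tau\mathcal{B})=0$ for the same reason.

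The missing input has to come from the target category, and it is exactly the paper's Step 1: because $\tau\mathcal{S}\cong\mathcal{S}$ and $F$ commutes with $\tau$, the underlying object $Y_{\mathcal{S}}\in{\sf H}_{2}$ is $\tau$-periodic, hence cannot be an indecomposable bundle (the indecomposable bundles are the $\tau^{i}\mathcal{L}$, $\tau^{i}\overline{\mathcal{L}}$, and no two of these in the same $\tau$-orbit are isomorphic, by \cite[Corollary 3.12]{nyman}); so $Y_{\mathcal{S}}$ has finite length. Granting this, if $d(\mathcal{S})=d(\mathcal{B})+1$ then $\operatorname{Hom}_{D^{b}({\sf H}_{2})}(F(\mathcal{B}),F(\mathcal{S}))\cong\operatorname{Ext}^{1}_{{\sf H}_{2}}(Y_{\mathcal{B}},Y_{\mathcal{S}})\cong D\operatorname{Hom}_{{\sf H}_{2}}(Y_{\mathcal{S}},\tau Y_{\mathcal{B}})$, and this vanishes once one also knows $Y_{\mathcal{B}}$ is a bundle --- a point your proposal likewise leaves implicit and which the paper disposes of in the second sub-case of its Step 3 (if every $Y_{\mathcal{O}(j)}$ had finite length, no line bundle of ${\sf H}_{2}$ would lie in the essential image of $F$). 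With these two additions your argument closes and is essentially the paper's proof, phrased with shift-functions $d$ instead of the repetitive category; the remainder (uniserial torsion objects in homogeneous tubes, and the reduction of the arithmetic case to Theorem \ref{thm.auto} via Corollary \ref{cor.main}) is fine.
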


\begin{proof}
By \cite[Theorem 3.1]{lenzing}, we may identify $D^{b}({\sf H}_{1})$ and $D^{b}({\sf H}_{2})$ with their repetitive category.  Furthermore, since ${\sf H}_{1}$ and ${\sf H}_{2}$ are noetherian hereditary categories having Serre duality, we know that there exists an integer $i$ such that all objects from ${\sf H}_{1}[0]$ may be identified, via $F$, as objects in the additive closure of ${\sf H}_{2}[i]$ and ${\sf H}_{2}[i+1]$ in the repetitive category of ${\sf H}_{2}$ \cite[Proposition 7.2]{lenzing}.  Therefore, composing $F$ with an iterate of the translation functor, we may assume objects from ${\sf H}_{1}[0]$ may be identified with objects in the additive closure of ${\sf H}_{2}[0]$ and ${\sf H}_{2}[1]$.  By definition of morphism in the repetitive category,
\begin{equation} \label{eqn.homrep}
\operatorname{Hom}({\sf H}_{2}[1],{\sf H}_{2}[0]) = 0.
\end{equation}

We claim that either the image of all objects of ${\sf H}_{1}[0]$ are in ${\sf H}_{2}[0]$ or the image of all objects of ${\sf H}_{1}[0]$ are in ${\sf H}_{2}[1]$.  Given the claim, then by composing with a translation we may assume the former is true, so that there is an integer $j$ such that $T^{j} \circ F$ restricts to a $k$-linear equivalence from ${\sf H}_{1}$ to ${\sf H}_{2}$.  The result will follow.  Therefore, to complete the proof, we prove the claim.

Since $F$ preserves indecomposability, every indecomposable of ${\sf H}_{1}[0]$ is sent to an indecomposable in ${\sf H}_{2}[0]$ or an indecomposable in ${\sf H}_{2}[1]$.  We will use this fact without comment throughout the proof.  We will also use the notation introduced by (\ref{eqn.oi}).
\newline
{\it Step 1:  We show that if $\mathcal{S} \in {\sf H}_{1}[0]$ is simple, then $F(S)$ has finite length.}  For, if not, then $F(S) \cong \mathcal{O}(j)$ for some $j \in \mathbb{Z}$ by \cite[Proposition 1.1]{lenzingreiten}.  On the other hand, since $F$ commutes with the Serre functor and translations, it commutes with all powers of $\tau$, so we have
\begin{eqnarray*}
\mathcal{O}(j) & \cong & F(\mathcal{S}) \\
& \cong & F(\tau \mathcal{S}) \\
& \cong & \tau F(\mathcal{S}) \\
& \cong & \mathcal{O}(j-2),
\end{eqnarray*}
which contradicts \cite[Corollary 3.12]{nyman}.
\newline
{\it Step 2:  We show that if $F(\mathcal{O}(j)) \in {\sf H}_{2}[1]$ for some $j$, then the image of ${\sf H}_{1}[0]$ is in ${\sf H}_{2}[1]$.}  Since $\operatorname{Hom}_{{\sf H}_{1}}(\mathcal{O}(j), \mathcal{S}) \neq 0$ for all simples $\mathcal{S} \in {\sf H}_{1}$ by \cite[Proposition 1.10(b)]{lenzingreiten}, it follows from (\ref{eqn.homrep}) that $F(\mathcal{S}) \in {\sf H}_{2}[1]$ for all simples $\mathcal{S}$, and thus that $F(\mathcal{T}) \in {\sf H}_{2}[1]$ for all finite length $\mathcal{T}$ in ${\sf H}_{1}$.  If $F(\mathcal{O}(l)) \in {\sf H}_{2}[0]$ for some $l$, then
\begin{eqnarray*}
\operatorname{Hom}_{D^{b}({\sf H}_{2})}(F(\mathcal{O}(l)),F(\mathcal{S})) & \cong & \operatorname{Ext}_{{\sf H}_{2}}^{1}(F(\mathcal{O}(l)),F(\mathcal{S})) \\
& \cong & D\operatorname{Hom}_{{\sf H}_{2}}(F(\mathcal{S}),F(\mathcal{O}(l-2))) \\
& = & 0
\end{eqnarray*}
where the second isomorphism follows from Serre duality and from the fact that $F$ commutes with translations and the Serre functor.  This contradicts the fact that $\operatorname{Hom}_{{\sf H}_{1}}(\mathcal{O}(l), \mathcal{S}) \neq 0$, and Step 2 follows.
\newline
{\it Step 3:  We show that if $F(\mathcal{O}(j)) \in {\sf H}_{2}[0]$ for all $j$ then the image of ${\sf H}_{1}[0]$ is in ${\sf H}_{2}[0]$.}  First, suppose $F(\mathcal{O}(l))$ is a line bundle for some $l$. If $F(\mathcal{S}) \in {\sf H}_{2}[1]$ for some simple $\mathcal{S} \in {\sf H}_{1}$, then $0 \neq \operatorname{Hom}_{{\sf H}_{1}}(\mathcal{O}(l), \mathcal{S})$.  On the other hand,
$$
\operatorname{Hom}_{D^{b}({\sf H}_{2})}(F(\mathcal{O}(l)),F(\mathcal{S})) \cong \operatorname{Ext}^{1}_{{\sf H}_{2}}(F(\mathcal{O}(l)), F(\mathcal{S})) =0
$$
where the last equality follows from Serre duality and Step 1.  Step 3 follows in this case.  Next, suppose $F(\mathcal{O}(j))$ has finite length for all $j$.  Then, since $F$ commutes with translation, Step 1 implies that the image of $F$ contains no line bundles, which contradicts the fact that $F$ is an equivalence.

The second part of the theorem follows from the first and Theorem \ref{thm.auto}.
\end{proof}

\subsection{Applications to homogeneous noncommutative curves of genus zero}
We now describe our contributions to Questions \ref{question.first}, \ref{question.third}, and \ref{question.second}.  For the remainder of this section, we assume ${\sf H}$ is a homogeneous noncommutative curve of genus zero with underlying $(2,2)$-bimodule $M$ and with $\operatorname{End}(\mathcal{L})=\operatorname{End}(\overline{\mathcal{L}})$ commutative.  We also assume $k$ is perfect and $\operatorname{char }k \neq 2$ so that we may invoke Theorem \ref{thm.auto}.  Thus, by Corollary \ref{cor.main}, we may identify ${\sf H}$ with $\mathbb{P}^{n.c.}(M)$.

We will also use the following notation: if ${\sf Gr }\mathbb{S}^{n.c.}(M)$ is the category of graded right $\mathbb{S}^{n.c.}(M)$-modules, then we let $\pi: {\sf Gr }\mathbb{S}^{n.c.}(M) \longrightarrow \mathbb{P}^{n.c.}(M)$ denote the quotient functor.

\subsubsection{Question \ref{question.first}}
Our contribution towards Question \ref{question.first} is the following:
\begin{prop} \label{prop.first}
The space ${\sf H}$ has a $k$-linear autoequivalence sending $\mathcal{L}$ to $\overline{\mathcal{L}}$ if and only if $M \cong K_{\sigma} \otimes {}^{*}M \otimes K_{\epsilon}$ for some $\sigma, \epsilon \in \operatorname{Gal}(K/k)$.  This is the case if and only if the group of $k$-linear autoequivalences acts transitively on the set of line bundles in ${\sf H}$
\end{prop}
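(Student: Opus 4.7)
My plan is to use Corollary \ref{newcor.main} to identify ${\sf H}$ with $\mathbb{P}^{n.c.}(M)$, under which $\mathcal{L}$ and $\overline{\mathcal{L}}$ become, by (\ref{eqn.oi}), the line bundles $\mathcal{O}(-1)$ and $\mathcal{O}(0)$ respectively. The two equivalences in the proposition will then be analyzed in turn.

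The second equivalence is easy: since $\tau^{-1}$ is a $k$-linear autoequivalence, and (\ref{eqn.oi}) describes the two $\tau$-orbits on line bundles as $\{\tau^{i}\mathcal{L}\} = \{\mathcal{O}(n) : n \text{ odd}\}$ and $\{\tau^{i}\overline{\mathcal{L}}\} = \{\mathcal{O}(n) : n \text{ even}\}$, transitivity of the autoequivalence group on the full set of line bundles is equivalent to the existence of a single autoequivalence merging these two orbits, which in turn is the same as one sending $\mathcal{L}$ to $\overline{\mathcal{L}}$.

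For the main equivalence I will invoke Theorem \ref{thm.auto}: every $k$-linear autoequivalence $F$ of $\mathbb{P}^{n.c.}(M)$ has the form $F \cong [-i] \circ \Phi \circ T_{\delta,\epsilon}$ for some $\delta, \epsilon \in \operatorname{Gal}(K/k)$, $i \in \mathbb{Z}$, and isomorphism $\phi : K_{\delta^{-1}} \otimes_{K} M \otimes_{K} K_{\epsilon} \to M^{i*}$ inducing $\Phi$. A direct unwinding of the constructions recalled at the start of Section 4 shows that both $T_{\delta,\epsilon}$ (a twist by the sequence $\{K_{\zeta_{n}}\}$) and $\Phi$ (induced by a $\mathbb{Z}$-algebra isomorphism of noncommutative symmetric algebras) preserve the standard idempotents of $\mathbb{S}^{n.c.}(-)$, and so send each $\mathcal{O}(n)$ to the corresponding $\mathcal{O}(n)$ in the target category. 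Only the shift $[-i]$ moves labels, sending $\mathcal{O}(n) \mapsto \mathcal{O}(n-i)$.

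Putting these observations together, the condition $F(\mathcal{L}) \cong \overline{\mathcal{L}}$, i.e. $\mathcal{O}(-1-i) \cong \mathcal{O}(0)$, forces $i = -1$ and $M^{i*} = {}^{*}M$. Thus the existence of $F$ is equivalent to the existence of $\delta, \epsilon \in \operatorname{Gal}(K/k)$ and an isomorphism $K_{\delta^{-1}} \otimes M \otimes K_{\epsilon} \cong {}^{*}M$, which rearranges (using $K_{\sigma} \otimes_{K} K_{\tau} \cong K_{\sigma\tau}$) to $M \cong K_{\delta} \otimes {}^{*}M \otimes K_{\epsilon^{-1}}$; renaming $\sigma := \delta$ and $\epsilon := \epsilon^{-1}$ recovers the statement. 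The main obstacle will be pinning down the conventions for the shift functor and verifying that $T_{\delta,\epsilon}$ and $\Phi$ fix the line-bundle labels; once these are settled, the rest is a mechanical rearrangement.
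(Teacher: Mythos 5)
Your proposal is correct and takes essentially the same route as the paper: identify ${\sf H}$ with $\mathbb{P}^{n.c.}(M)$ so that $\mathcal{L}$ and $\overline{\mathcal{L}}$ become $\pi e_{j}\mathbb{S}^{n.c.}(M)$ and $\pi e_{j-1}\mathbb{S}^{n.c.}(M)$, decompose $F$ via Theorem \ref{thm.auto}, note that only the shift moves the index so that $F(\mathcal{L})\cong\overline{\mathcal{L}}$ forces $i=-1$ and yields $K_{\delta^{-1}}\otimes M\otimes K_{\epsilon}\cong {}^{*}M$, and deduce transitivity from the fact that the line bundles form two $\tau$-orbits. The only difference is that where you propose to verify by hand that $T_{\delta,\epsilon}$ and $\Phi$ fix the idempotents while $[-i]$ shifts them, the paper simply cites \cite[Lemma 3.16, Corollary 3.17, Lemma 4.1, Lemma 4.2(1), Lemma 4.8(1)]{nyman} for exactly those facts.
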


\begin{proof}
By \cite[Lemma 3.16 and Corollary 3.17]{nyman}, we must have $\mathcal{L} = \pi e_{j}\mathbb{S}^{n.c.}(M)$ for some integer $j$.  Since $M$ is a $(2,2)$-bimodule, it follows that $\overline{\mathcal{L}} = \pi e_{j-1}\mathbb{S}^{n.c.}(M)$.

Now suppose $F$ is a $k$-linear equivalence satisfying the hypothesis.  By Theorem \ref{thm.auto}, $F$ is a composition
$$
\mathbb{P}^{n.c.}(M) \overset{T_{\delta,\epsilon}}{\longrightarrow} \mathbb{P}^{n.c.}(K_{\delta^{-1}} \otimes M \otimes K_{\epsilon}) \overset{\Phi}{\longrightarrow} \mathbb{P}^{n.c.}(M^{i*}) \overset{[-i]}{\longrightarrow} \mathbb{P}^{n.c.}(M).
$$
Since $F(\mathcal{L}) \cong \overline{\mathcal{L}}$, it follows from \cite[Lemma 4.1, Lemma 4.2(1) and Lemma 4.8(1)]{nyman} that $i=-1$.  Therefore, $\Phi$ is induced by an isomorphism $K_{\delta^{-1}} \otimes M \otimes K_{\epsilon} \longrightarrow {}^{*}M$ as desired.

Conversely, if there is an isomorphism $\phi: K_{\delta^{-1}} \otimes M \otimes K_{\epsilon} \longrightarrow {}^{*}M$, then the equivalence
$$
\mathbb{P}^{n.c.}(M) \overset{T_{\delta,\epsilon}}{\longrightarrow} \mathbb{P}^{n.c.}(K_{\delta^{-1}} \otimes M \otimes K_{\epsilon}) \overset{\Phi}{\longrightarrow} \mathbb{P}^{n.c.}({}^{*}M) \overset{[1]}{\longrightarrow} \mathbb{P}^{n.c.}(M)
$$
such that $\Phi$ is induced by $\phi$ sends $\mathcal{L}$ to $\overline{\mathcal{L}}$ by \cite[Lemma 4.1, Lemma 4.2(1) and Lemma 4.8(1)]{nyman}.

The last result follows from the fact that every line bundle in ${\sf H}$ is either of the form $\tau^{i}\mathcal{L}$ for some $i$ or of the form $\tau^{i}\overline{\mathcal{L}}$ for some $i$.
\end{proof}

\subsubsection{Question \ref{question.third}}
In order to address Question \ref{question.third}, we need the following result.
\begin{lemma}
The automorphism group $\operatorname{Aut }(\mathbb{X})$ of ${\sf H}$ (defined in Section \ref{section.questions}) is isomorphic to the group of shift-free $k$-linear autoequivalences of $\mathbb{P}^{n.c.}(M)$.
\end{lemma}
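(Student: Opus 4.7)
The plan is to identify $\operatorname{Aut}(\mathbb{X})$ with the shift-free autoequivalences by applying Theorem \ref{thm.auto} and tracking how each piece of the canonical decomposition acts on the distinguished line bundle $\mathcal{L}$. Using Corollary \ref{cor.main}, we identify $\tilde{\sf H}$ with $\mathbb{P}^{n.c.}(M)$, and observe that any $k$-linear autoequivalence of ${\sf H}$ extends canonically to $\tilde{\sf H}$ (via the universal property of the locally noetherian closure \cite[Theorem 2.4]{blowup}) and conversely restricts from $\tilde{\sf H}$ to ${\sf H}$ since autoequivalences preserve noetherianness. The problem thus reduces to showing that an autoequivalence $F$ of $\mathbb{P}^{n.c.}(M)$ fixes $\mathcal{L}$ up to isomorphism if and only if $F$ is shift-free.

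First I would recall, as in the opening paragraph of the proof of Proposition \ref{prop.first}, that $\mathcal{L} \cong \pi e_{j}\mathbb{S}^{n.c.}(M)$ for some integer $j$, and that by \cite[Corollary 3.12]{nyman} different indices yield non-isomorphic line bundles $\pi e_{l}\mathbb{S}^{n.c.}(M)$. Then, applying Theorem \ref{thm.auto}, I would write any given $k$-linear autoequivalence $F$ in the unique form
$$
F \;\cong\; [-i] \circ \Phi \circ T_{\delta,\epsilon},
$$
with $\delta,\epsilon,i$ determined and $\Phi$ determined up to scaling of the underlying bimodule isomorphism.

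Next, I would compute $F(\mathcal{L})$ step by step using \cite[Lemmas 4.1, 4.2(1), and 4.8(1)]{nyman} (the same trio already used in the proof of Proposition \ref{prop.first}): the twist $T_{\delta,\epsilon}$ and the bimodule-induced equivalence $\Phi$ each carry $\pi e_{j}$ of the source $\mathbb{Z}$-algebra to $\pi e_{j}$ of the target $\mathbb{Z}$-algebra, while the shift $[-i]$ translates the idempotent index by $i$. Consequently, after identifying the final target with $\mathbb{S}^{n.c.}(M)$, one obtains $F(\mathcal{L}) \cong \pi e_{j+i}\mathbb{S}^{n.c.}(M)$, so that $F(\mathcal{L}) \cong \mathcal{L}$ if and only if $i = 0$, i.e., exactly when $F$ is shift-free.

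Finally I would assemble the isomorphism: the assignment sending a shift-free autoequivalence to its isomorphism class defines a well-defined map into $\operatorname{Aut}(\mathbb{X})$, bijective by the uniqueness clause of Theorem \ref{thm.auto} together with the characterization just established, and evidently a group homomorphism since both collections are subgroups of the $k$-linear autoequivalence group of $\mathbb{P}^{n.c.}(M)$. The main obstacle is really the bookkeeping around the action of $T_{\delta,\epsilon}$, $\Phi$, and $[-i]$ on the generators $\pi e_{l}\mathbb{S}^{n.c.}(M)$ across three \emph{different} $\mathbb{Z}$-algebras; once this is absorbed from the cited lemmas of \cite{nyman}, the argument is essentially formal.
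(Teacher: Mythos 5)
Your proposal is correct and follows essentially the same route as the paper: decompose $F \cong [-i]\circ\Phi\circ T_{\delta,\epsilon}$ via Theorem \ref{thm.auto}, identify $\mathcal{L}$ with $\pi e_{j}\mathbb{S}^{n.c.}(M)$, and use the cited lemmas of \cite{nyman} to see $F(\mathcal{L})\cong\pi e_{j+i}\mathbb{S}^{n.c.}(M)$, so that $F$ fixes $\mathcal{L}$ exactly when $i=0$. The extra remarks about passing between ${\sf H}$ and $\tilde{\sf H}$ and about the group structure are harmless elaborations of what the paper leaves implicit.
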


\begin{proof}
If $F$ is a $k$-linear autoequivalence of $\mathbb{P}^{n.c.}(M)$, then by Theorem \ref{thm.auto} (and in the notation introduced before, and in, the statement of the theorem),  $F \cong [-i] \circ \Phi  \circ T_{\delta, \epsilon}$.  Since $\mathcal{L}$ is of the form $\pi e_{j}\mathbb{S}^{n.c.}(M)$, it follows from \cite[Lemma 4.1, Lemma 4.2(1) and Lemma 4.8(1)]{nyman} that $F(\mathcal{L}) \cong \pi e_{j+i}\mathbb{S}^{n.c.}(M)$.  Thus, $F(\mathcal{L}) \cong \mathcal{L}$ if and only if $i=0$, i.e. if and only if $F$ is shift-free.
\end{proof}

The group of shift-free autoequivalences of ${\mathbb{P}}^{n.c.}(M)$, $\operatorname{Aut }\mathbb{P}^{n.c.}(M)$, is described in \cite{nyman}.  For the readers convenience, we recount the result here.  In order to proceed, we describe some notation:  let $\operatorname{Stab}M$ denote the subgroup of $\mbox{Gal }(K/k)^{2}$ consisting of $(\delta, \epsilon)$ such that $K_{\delta^{-1}} \otimes_{K} M \otimes_{K} K_{\epsilon} \cong M$ and let $\operatorname{Aut }M$ denote the group (under composition) of bimodule isomorphisms $M \longrightarrow M$ modulo the relation defined by setting $\phi' \equiv \phi$ if and only if there exist nonzero $a, b \in K$ such that $\phi'  \phi^{-1}(m)=a \cdot m \cdot b$ for all $m \in M$.

We will also need to recall some preliminaries regarding simple bimodules.  In \cite{papp}, simple left finite dimensional $k$-central $K-K$-bimodules are classified.  Since this classification will be invoked in what follows, we recall it now.  Let $\overline{K}$ be an algebraic closure of $K$, let $\operatorname{Emb}(K)$ denote the set of $k$-algebra maps $K \longrightarrow \overline{K}$, and let $G=\operatorname{Gal}(\overline{K}/K)$.  Now, $G$ acts on $\operatorname{Emb}(K)$ by left composition. Given $\lambda\in \operatorname{Emb}(K)$, we denote the orbit of $\lambda$ under this action by $\lambda^G$.  We denote the set of finite orbits of $\operatorname{Emb}(K)$ under the action
of $G$ by $\Lambda(K)$.

\begin{thm} \label{thm.papp} \cite{papp} There is a one-to-one correspondence
between $\Lambda(K)$ and isomorphism classes of simple left finite dimensional
$k$-central $K-K$-bimodules.
\end{thm}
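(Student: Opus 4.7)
My approach is to identify $k$-central $K$-$K$-bimodules with left modules over the commutative $k$-algebra $A := K \otimes_k K$, and then to classify the maximal ideals of $A$. The equivalence of categories is standard: the left $K$-action corresponds to multiplication by $K \otimes 1 \subset A$ and the right $K$-action to multiplication by $1 \otimes K \subset A$. Under this equivalence simple bimodules correspond to simple $A$-modules, and since $A$ has $K$-dimension $[K:k]$ on each side, the ``left finite dimensional'' condition is automatic.

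Because $A$ is commutative, every simple $A$-module is cyclic and hence isomorphic to a residue field $A/\mathfrak{m}$ for a unique maximal ideal $\mathfrak{m}$. The theorem therefore reduces to a bijection between $\operatorname{MaxSpec}(A)$ and $\Lambda(K)$. For each $\lambda \in \operatorname{Emb}(K)$ I define a $k$-algebra map $\mu_\lambda : A \to \overline{K}$ by $\mu_\lambda(a \otimes b) = a \lambda(b)$; its image is the compositum $K \cdot \lambda(K)$, a subfield of $\overline{K}$, so $\mathfrak{m}_\lambda := \ker \mu_\lambda$ is maximal. For $g \in G = \operatorname{Gal}(\overline{K}/K)$ one has $\mu_{g\lambda} = g \circ \mu_\lambda$, because $g$ fixes $K$ pointwise; hence $\mathfrak{m}_{g\lambda} = \mathfrak{m}_\lambda$, and the assignment descends to a well-defined map $\Lambda(K) \to \operatorname{MaxSpec}(A)$.

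I then verify that this map is a bijection. For surjectivity, given a maximal ideal $\mathfrak{m}$, the residue field $F := A/\mathfrak{m}$ is a finite extension of $K$ via the first factor; choosing any $K$-embedding $F \hookrightarrow \overline{K}$ and composing with the second inclusion $K \hookrightarrow A \twoheadrightarrow F \hookrightarrow \overline{K}$ produces a $\lambda \in \operatorname{Emb}(K)$ whose associated map $\mu_\lambda$ equals the composite $A \twoheadrightarrow F \hookrightarrow \overline{K}$, whence $\mathfrak{m} = \mathfrak{m}_\lambda$. For injectivity, if $\mathfrak{m}_\lambda = \mathfrak{m}_{\lambda'}$, then $\mu_\lambda$ and $\mu_{\lambda'}$ factor through a common residue field $F$, giving two $K$-embeddings $F \to \overline{K}$; the extension theorem for embeddings of algebraic extensions into an algebraic closure produces a $g \in G$ with $\mu_{\lambda'} = g \circ \mu_\lambda$, and evaluating at $1 \otimes b$ yields $\lambda' = g \lambda$. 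Finally, the finite-orbit hypothesis built into $\Lambda(K)$ is vacuous in our setting, since $\operatorname{Emb}(K)$ is itself finite because $[K:k] < \infty$.

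I expect the main obstacle to be the injectivity step: one must pass from the coincidence of two kernels inside $A$ to an actual element of $\operatorname{Gal}(\overline{K}/K)$ realizing the coincidence of the corresponding embeddings. This is the one place where real field theory enters, via the isomorphism extension theorem for embeddings of a finite extension into $\overline{K}$; every other step is formal.
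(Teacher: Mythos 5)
Your argument is correct, and it is worth noting at the outset that the paper itself offers no proof of this statement: it is recalled verbatim from \cite{papp}, so there is nothing internal to compare against. Your reduction --- identifying $k$-central $K$-$K$-bimodules with modules over the commutative ring $A=K\otimes_k K$, matching simple modules with residue fields $A/\mathfrak{m}$, and then establishing the bijection $\Lambda(K)\to\operatorname{MaxSpec}(A)$, $\lambda^{G}\mapsto\ker\mu_\lambda$ --- is sound, and each step checks out: $\operatorname{im}\mu_\lambda=K[\lambda(K)]$ is a field because $\lambda(K)$ is algebraic over $K$; $G$-invariance of the kernel is immediate from $\mu_{g\lambda}=g\circ\mu_\lambda$; surjectivity follows from embedding the residue field into $\overline{K}$ over $K$; and injectivity is exactly the isomorphism extension theorem, as you identify. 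Moreover, unwinding your bijection, $A/\mathfrak{m}_\lambda\cong K\vee\lambda(K)$ with action $(a\otimes b)\cdot v=av\lambda(b)$, which is precisely the bimodule $V(\lambda)$ the paper attaches to $\lambda$, so your correspondence agrees with the one actually used later. The only caveat is one you flag yourself: your proof uses $[K:k]<\infty$ both to make every simple $A$-module automatically left finite dimensional and to render the finite-orbit condition in $\Lambda(K)$ vacuous. That is legitimate here, since $K$ is a finite extension of $k$ throughout this paper, but the theorem in \cite{papp} is stated for arbitrary extensions $K/k$, where the finite-orbit restriction does real work (it must be matched against left finite dimensionality of $A/\mathfrak{m}_\lambda$); your argument would need that extra comparison to recover the cited result in full generality.
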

In fact, the class of the simple bimodule corresponding to $\lambda$ is the class of the bimodule, $V(\lambda)$, defined as follows: as a set, $V(\lambda)=K \vee \lambda(K)$, and this set with its usual field structure is denoted $K(\lambda)$.  The left and right action are via multiplication in this field: $x \cdot v := xv$ while $v \cdot x := v\lambda(x)$ (\cite[Proposition 2.3]{hart}).

The group $\operatorname{Aut }\mathbb{P}^{n.c.}(M)$ is described in the following
\begin{thm}
There is a group homomorphism
$$
\theta: \operatorname{Stab}(M)^{op} \longrightarrow \operatorname{Aut}(\operatorname{Aut}(M))
$$
(described below) such that
$$
\operatorname{Aut }\mathbb{P}^{n.c.}(M) \cong \operatorname{Aut }M \rtimes_{\theta} \operatorname{Stab}M^{op}.
$$
Furthermore, the factors of $\operatorname{Aut}\mathbb{P}^{n.c.}(M)$ are described as follows:
\begin{enumerate}

\item{} If $M \cong K_{\sigma} \oplus K_{\epsilon}$, where $\sigma, \epsilon \in \operatorname{Gal}(K/k)$ and $\sigma \neq \epsilon$, then
$$
\operatorname{Aut}M \cong K^{*} \times K^{*}/\{(a\sigma(b),a\epsilon(b))|a, b \in K^{*}\}.
$$
and
$$
\operatorname{Stab}M = \{(\delta,\gamma)|\{\delta^{-1} \sigma \gamma, \delta^{-1} \epsilon \gamma \}=\{\sigma, \epsilon \}\},
$$
\item{} If $M \cong V(\lambda)$, and if, for $\delta \in \operatorname{Gal }(K/k)$, $\overline{\delta}$ denotes an extension of $\delta$ to $\overline{K}$, then
$$
\operatorname{Aut }(M) \cong K(\lambda)^{*}/K^{*}\lambda(K)^{*}.
$$
and
$$
\operatorname{Stab}(M) = \{(\delta, \gamma) \in \operatorname{Gal}(K/k)^{2} | (\overline{\delta}^{-1}\lambda \gamma)^{G}=\lambda^{G}\}.
$$
\end{enumerate}
\end{thm}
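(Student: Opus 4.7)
The plan is to combine the preceding lemma, which identifies $\operatorname{Aut}(\mathbb{X})$ with the group $\operatorname{Aut}\mathbb{P}^{n.c.}(M)$ of shift-free $k$-linear autoequivalences, with the classification Theorem \ref{thm.auto}. A shift-free autoequivalence of $\mathbb{P}^{n.c.}(M)$ corresponds uniquely (up to the stated equivalence) to a triple $(\delta,\epsilon,[\phi])$ with $(\delta,\epsilon) \in \operatorname{Gal}(K/k)^{2}$ and $\phi : K_{\delta^{-1}} \otimes_{K} M \otimes_{K} K_{\epsilon} \to M$ a bimodule isomorphism taken modulo the relation defining $\operatorname{Aut}(M)$. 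The existence of such a $\phi$ forces $(\delta,\epsilon) \in \operatorname{Stab}(M)$, while the indeterminacy in $\phi$ is exactly the $\operatorname{Aut}(M)$-equivalence.

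First I would verify that forgetting $[\phi]$ yields a surjective group homomorphism $\operatorname{Aut}\mathbb{P}^{n.c.}(M) \to \operatorname{Stab}(M)^{op}$ with kernel $\operatorname{Aut}(M)$. Using the elementary identity $K_{\sigma} \otimes_{K} K_{\tau} \cong K_{\sigma\tau}$, one computes $T_{\delta_{1},\epsilon_{1}} \circ T_{\delta_{2},\epsilon_{2}} \cong T_{\delta_{2}\delta_{1},\epsilon_{2}\epsilon_{1}}$, which accounts for the appearance of the opposite group. Sending $(\delta,\epsilon) \mapsto T_{\delta,\epsilon}$ (with $\phi = \operatorname{id}_{M}$) gives a section of this homomorphism, so the short exact sequence splits to give $\operatorname{Aut}\mathbb{P}^{n.c.}(M) \cong \operatorname{Aut}(M) \rtimes_{\theta} \operatorname{Stab}(M)^{op}$, where $\theta(\delta,\epsilon)$ is the conjugation action, sending $[\phi]$ to the class of $K_{\delta^{-1}} \otimes \phi \otimes K_{\epsilon}$ composed with a fixed identification $K_{\delta^{-1}} \otimes M \otimes K_{\epsilon} \cong M$.

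For case (1), the bimodules $K_{\sigma}$ and $K_{\epsilon}$ are simple and non-isomorphic (as $\sigma \neq \epsilon$), so every bimodule automorphism of $M = K_{\sigma} \oplus K_{\epsilon}$ is diagonal and parametrized by a pair $(c_{1},c_{2}) \in K^{*} \times K^{*}$. The equivalence $\phi' \equiv \phi$ given by $\phi'(m) = a \cdot \phi(m) \cdot b$ scales $(c_{1},c_{2})$ by $(a\sigma(b),a\epsilon(b))$, yielding the claimed quotient. Since $K_{\delta^{-1}} \otimes K_{\mu} \otimes K_{\gamma} \cong K_{\delta^{-1}\mu\gamma}$, the bimodule $K_{\delta^{-1}} \otimes M \otimes K_{\gamma}$ decomposes as $K_{\delta^{-1}\sigma\gamma} \oplus K_{\delta^{-1}\epsilon\gamma}$, which is isomorphic to $M$ precisely when $\{\delta^{-1}\sigma\gamma,\delta^{-1}\epsilon\gamma\} = \{\sigma,\epsilon\}$, again using that $K_{\mu} \cong K_{\nu}$ iff $\mu = \nu$. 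For case (2), the bimodule $V(\lambda)$ is the field $K(\lambda) = K \vee \lambda(K) \subset \overline{K}$, with left multiplication by $K$ and right action through $\lambda$; a bimodule endomorphism is $K\lambda(K)$-linear and hence is multiplication by an element of $K(\lambda)$, so $\operatorname{Aut}_{\text{bimod}}(V(\lambda)) = K(\lambda)^{*}$. The equivalence $\phi'(v) = a \phi(v) \lambda(b) = (a\lambda(b))\phi(v)$ quotients out by $K^{*}\lambda(K)^{*}$, giving the stated description of $\operatorname{Aut}(M)$. A direct tensor computation shows $K_{\delta^{-1}} \otimes V(\lambda) \otimes K_{\gamma} \cong V(\overline{\delta}^{-1}\lambda\gamma)$ for any lift $\overline{\delta}$ of $\delta$ to $\overline{K}$; by Theorem \ref{thm.papp} this is isomorphic to $V(\lambda)$ iff $(\overline{\delta}^{-1}\lambda\gamma)^{G} = \lambda^{G}$, which is independent of the chosen lift.

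The main obstacle will be keeping the twisting and tensor conventions consistent across the chain of identifications, and in particular verifying the opposite-group structure by carefully tracking the order of compositions of the $T_{\delta,\epsilon}$'s. A secondary technicality is the case (2) identification $\operatorname{End}_{\text{bimod}}(V(\lambda)) \cong K(\lambda)$, which requires noting that although $V(\lambda)$ is simple as a bimodule, its endomorphism ring is in general a proper extension of $K$ because the left and right $K$-actions do not suffice to generate all $K$-linear endomorphisms of the ambient field $K(\lambda)$.
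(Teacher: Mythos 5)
Your route is the right one, and it is worth noting at the outset that this paper does not actually prove the theorem: it is recounted from \cite{nyman} (Lemmas 7.2--7.3 and Theorem 7.4 there), so the only basis for comparison is the cited reference, whose argument your outline essentially reconstructs from Theorem \ref{thm.auto}. Your identification of a shift-free autoequivalence with a pair $\bigl((\delta,\epsilon),[\phi]\bigr)$, the anti-homomorphism to $\operatorname{Gal}(K/k)^{2}$ coming from $T_{\delta_{1},\epsilon_{1}}\circ T_{\delta_{2},\epsilon_{2}}\cong T_{\delta_{2}\delta_{1},\epsilon_{2}\epsilon_{1}}$ (hence the opposite group), the kernel being $\operatorname{Aut}M$, and the computations of $\operatorname{Aut}M$ and $\operatorname{Stab}M$ in both cases (diagonality of automorphisms of $K_{\sigma}\oplus K_{\epsilon}$ when $\sigma\neq\epsilon$; $\operatorname{End}(V(\lambda))\cong K(\lambda)$ acting by multiplication; the use of Theorem \ref{thm.papp} and Lemma \ref{lemma.autocomp} for the stabilizer) are all correct.

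The one genuine soft spot is the splitting. Writing ``$(\delta,\epsilon)\mapsto T_{\delta,\epsilon}$ with $\phi=\operatorname{id}_{M}$'' is not meaningful, since $\phi$ must be an isomorphism $K_{\delta^{-1}}\otimes M\otimes K_{\epsilon}\to M$ and there is no canonical identity; the fibers of your surjection onto $\operatorname{Stab}(M)^{op}$ are merely torsors under $\operatorname{Aut}M$, and a short exact sequence with abstract kernel need not split. To get the semidirect product you must exhibit a genuinely homomorphic section, i.e.\ choose the identifications $K_{\delta^{-1}}\otimes M\otimes K_{\epsilon}\cong M$ coherently so that the associated $2$-cocycle vanishes. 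This is exactly what the explicit formulas for $\theta$ in the two cases encode: in case (1) the componentwise identifications $K_{\delta^{-1}\mu\gamma}\cong K_{\mu'}$ (together with the coordinate swap $\upsilon$ when $\delta^{-1}\sigma\gamma=\epsilon$), and in case (2) the isomorphism $\psi=\overline{\delta}|_{K(\lambda)}$. You should verify that these particular choices compose multiplicatively; once that is done, your description of $\theta$ as conjugation by the chosen identification matches the statement, and the rest of your argument goes through.
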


To describe $\theta$, we will need to recall \cite[Lemma 2.2]{nyman}:

\begin{lemma} \label{lemma.autocomp}
Suppose $\sigma, \epsilon \in \operatorname{Gal}(K/k)$ and let $\overline{\sigma}$ denote an extension of $\sigma$ to $\overline{K}$.  Under the isomorphism given in Theorem \ref{thm.papp}, the $G$-orbit of $\overline{\sigma} \lambda \epsilon$ corresponds to the isomorphism class of the simple bimodule $K_{\sigma} \otimes V(\lambda) \otimes K_{\epsilon}$.
\end{lemma}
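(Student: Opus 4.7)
The plan is to prove the isomorphism $K_{\sigma} \otimes_{K} V(\lambda) \otimes_{K} K_{\epsilon} \cong V(\overline{\sigma}\lambda\epsilon)$ by working directly with the concrete model $V(\lambda) = K \vee \lambda(K) \subset \overline{K}$; the lemma then follows from Theorem \ref{thm.papp}, which identifies $V(\mu)$ with the $G$-orbit $\mu^{G}$.

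I would handle the two tensor factors separately. First, for $V(\lambda) \otimes_{K} K_{\epsilon}$, the defining relation $v\lambda(b) \otimes c = v \otimes bc$ lets me reduce every element to the form $v \otimes 1$, and the right $K$-action works out to $(v \otimes 1) \cdot b = v \otimes \epsilon(b) = v\lambda(\epsilon(b)) \otimes 1$. Since $\epsilon \in \operatorname{Gal}(K/k)$ is a $k$-automorphism of $K$, the compositum $K \vee \lambda\epsilon(K)$ equals $K \vee \lambda(K)$ as a subfield of $\overline{K}$, and the identification $v \otimes 1 \leftrightarrow v$ yields a bimodule isomorphism $V(\lambda) \otimes_{K} K_{\epsilon} \cong V(\lambda\epsilon)$.

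Next, for $K_{\sigma} \otimes_{K} V(\mu)$, the relation $c\sigma(d) \otimes v = c \otimes dv$ collapses every element to $1 \otimes v$, and the left action becomes the twisted action $a \cdot (1 \otimes v) = 1 \otimes \sigma^{-1}(a) v$. To straighten it, I would fix any extension $\overline{\sigma}: \overline{K} \to \overline{K}$ of $\sigma$ and apply $\overline{\sigma}$ pointwise to $V(\mu) \subset \overline{K}$. The resulting bijection $v \mapsto \overline{\sigma}(v)$ identifies $K \vee \mu(K)$ with its image $K \vee \overline{\sigma}\mu(K) = V(\overline{\sigma}\mu)$ and intertwines the structures: the twisted left action becomes standard since $\overline{\sigma}(\sigma^{-1}(a)v) = a\overline{\sigma}(v)$, while the right action of $b$, originally multiplication by $\mu(b)$, becomes multiplication by $\overline{\sigma}\mu(b)$. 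Hence $K_{\sigma} \otimes_{K} V(\mu) \cong V(\overline{\sigma}\mu)$.

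Applying the second isomorphism with $\mu = \lambda$ and then the first with $\lambda$ replaced by $\overline{\sigma}\lambda$ yields $K_{\sigma} \otimes_{K} V(\lambda) \otimes_{K} K_{\epsilon} \cong V(\overline{\sigma}\lambda\epsilon)$, and Theorem \ref{thm.papp} converts this into the desired orbit statement. As a final bookkeeping step I would verify that the orbit $(\overline{\sigma}\lambda\epsilon)^{G}$ is independent of the chosen extension: any two extensions of $\sigma$ to $\overline{K}$ differ by left composition with an element of $G = \operatorname{Gal}(\overline{K}/K)$, which is exactly the action defining the orbit. I do not anticipate a substantial obstacle; the only step requiring real care is confirming that the straightening map $v \mapsto \overline{\sigma}(v)$ genuinely produces a bimodule morphism, and this is precisely where the assumption $\overline{\sigma}|_{K} = \sigma$ is used.
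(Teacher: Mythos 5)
Your argument is correct: the two reductions $V(\lambda)\otimes_{K}K_{\epsilon}\cong V(\lambda\epsilon)$ and $K_{\sigma}\otimes_{K}V(\mu)\cong V(\overline{\sigma}\mu)$ (the latter via the straightening map $c\otimes v\mapsto c\overline{\sigma}(v)$, which is $K$-balanced precisely because $\overline{\sigma}|_{K}=\sigma$) combine to give $K_{\sigma}\otimes V(\lambda)\otimes K_{\epsilon}\cong V(\overline{\sigma}\lambda\epsilon)$, and your observation that two extensions of $\sigma$ differ by left composition with an element of $G$ correctly handles the well-definedness of the orbit. The paper itself only quotes this lemma from \cite[Lemma 2.2]{nyman} without reproducing a proof, and your direct computation with the concrete model $V(\lambda)=K\vee\lambda(K)$ is exactly the standard argument used there.
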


According to \cite[Lemma 2.1]{nyman}, there are only two possibilities for $M$ such that $\mathbb{P}^{n.c.}(M)$ is not equivalent to the commutative projective line.  We define $\theta$ according to the corresponding structure of $M$.

\begin{enumerate}

\item{} In case $\sigma, \epsilon \in \operatorname{Gal}(K/k)$, $\sigma \neq \epsilon$, $M=K_{\sigma} \oplus K_{\epsilon}$, and $(\delta, \gamma) \in \operatorname{Stab}M$ is such that $\delta^{-1} \sigma \gamma = \sigma$, we define $\theta(\delta, \gamma)$ as the element in
    $$
    \operatorname{Aut}(K^{*} \times K^{*}/\{(a \sigma(b),a \epsilon(b))|a,b \in K^{*}\})
    $$
    which sends the class of the pair $(c, d) \in K^{*} \times K^{*}$ to the class of the pair $(\delta^{-1}(c), \delta^{-1}(d))$.  In other words, $\theta(\delta, \gamma)$ acting on the class of an isomorphism $\phi$ equals the class of $\delta^{-1} \phi \delta$, where $\delta^{-1}$ and $\delta$ act coordinate-wise on $K^{2}$.

    If $\delta^{-1} \sigma \gamma = \epsilon$, then we define $\theta(\delta, \gamma)$ as automorphism sending the class of $(c, d)$ to the class of $(\delta^{-1}(d), \delta^{-1}(c))$.  In other words, $\theta(\delta, \gamma)$ acting on the class of an isomorphism $\phi$ equals the class of $\upsilon \delta^{-1} \phi \delta \upsilon$, where $\delta^{-1}$ and $\delta$ act coordinate-wise on $K^{2}$ and $\upsilon$ is the linear transformation that exchanges the coordinates.

\item{} In case $M=V(\lambda)$ and $(\delta, \gamma) \in \operatorname{Stab}M$, we define $\theta(\delta, \gamma)$ as the function sending the class of $c \in K(\lambda)^{*}$ to the class of $\psi^{-1}(c)$, where $\psi:K(\lambda) \longrightarrow K(\lambda)$ is the $k$-algebra isomorphism defined in \cite[Proposition 7.1]{nyman}.  Specifically, there exists some extension of $\delta$, $\overline{\delta}$, such that ${\overline{\delta}}^{-1}\lambda \gamma = \lambda$ by Lemma \ref{lemma.autocomp}.  We let $\psi=\overline{\delta}|_{K(\lambda)}$.

    Thus, $\theta(\delta, \gamma)$ acting on the class of an isomorphism $\phi$ equals the class of $\psi^{-1} \phi \psi$.
\end{enumerate}

\subsubsection{Question \ref{question.second}}
We now describe the Auslander-Reiten translation functor of ${\sf H}$.  As described in \cite{nyman}, it is simply the equivalence
$$
\mathbb{P}^{n.c.}(M) \overset{\Phi}{\longrightarrow} \mathbb{P}^{n.c.}(M^{**}) \overset{[-2]}{\longrightarrow} \mathbb{P}^{n.c.}(M)
$$
where $\Phi$ is induced by an isomorphism $\phi:M \longrightarrow M^{**}$ (which exists, for example, by \cite[Theorem 3.13]{hart}).  This address Question \ref{question.second}.


\begin{thebibliography}{11}



\bibitem{az} M. Artin and J. J. Zhang, Noncommutative projective schemes, {\it Adv. Math.} {\bf 109} (1994), no. 2, 228-287.

\bibitem{dlab} V. Dlab and C. Ringel, The preprojective algebra of a modulated graph, Representation Theory II (Proc. Second Internat. Conf. Carleton Univ., Ottawa, Ont., 1979), Lecture Notes in Math., vol. 832, Springer-Verlag, 1980, pp. 216-231.

\bibitem{hart} J. Hart and A. Nyman, Duals of simple two-sided vector spaces, {\it Comm. Algebra} {\bf 40} (2012), 2405-2419.

\bibitem{kussin} D. Kussin, Noncommutative curves of genus zero: related to finite dimensional algebras, {\it Mem. Amer. Math. Soc.} {\bf 942} (2009), x+128pp.


\bibitem{newkussin} D. Kussin, Noncommutative smooth projective curves: local and global skewness, arXiv:1412.0290.

\bibitem{lenzing} H. Lenzing, Hereditary categories, {\it Handbook of tilting theory}, 105-146, London Math. Soc. Lecture Note Ser., 332, Cambridge Univ. Press, Cambridge, (2007).

\bibitem{lenzingreiten} H. Lenzing and I. Reiten, Hereditary Noetherian categories of positive Euler characteristic, {\it Math. Z.} {\bf 254} (2006), 133-171.


\bibitem{duality} A. Nyman, Serre duality for non-commutative $\mathbb{P}^{1}$-bundles, {\it Trans. Amer. Math. Soc.} {\bf 357} (2005), 1349-1416.

\bibitem{nyman} A. Nyman, The geometry of arithmetic noncommutative projective lines, {\it J. Algebra} {\bf 414} (2014), 190-240.

\bibitem{papp} A. Nyman and C.J. Pappacena, Two-sided vector spaces, {\it Linear Algebra Appl.} {\bf 420} (2007), 339-360.

\bibitem{staff} J. T. Stafford and M. van den Bergh, Noncommutative curves and noncommutative surfaces, {\it Bull. Amer. Math. Soc. (N.S.)} {\bf 38} (2001), no. 2, 171-216.

\bibitem{blowup} M. Van den Bergh, Blowing up of non-commutative smooth surfaces, {\it Mem. Amer. Math. Soc.} {\bf 154} (2001), no. 734, x+140pp.

\bibitem{vandenbergh}
M. Van den Bergh, Non-commutative $\mathbb{P}^{1}$-bundles over commutative schemes, {\it Trans. Amer. Math. Soc.} {\bf 364} (2012), 6279-6313.

\bibitem{quadrics}
M. Van den Bergh, Noncommutative quadrics, {\it Int. Math. Res. Not.} (2011), no. 17, 3983-4026.


\end{thebibliography}
\end{document}